\theoremstyle{definition} 
\newtheorem{Def}{Definition}[subsection] 
\theoremstyle{plain} 
\newtheorem{Pro}[Def]{Proposition} 
\newtheorem{Lem}[Def]{Lemma} 
\newtheorem{The}[Def]{Theorem} 
\newtheorem{Cor}[Def]{Corollary} 
\theoremstyle{remark} 
\newtheorem{Rem}[Def]{Remark} 
\newtheorem{Exa}[Def]{Example} 
\DeclareMathOperator{\Gf}{\mathbf{G}}
\DeclareMathOperator{\Gfd}{\mathbf{G}^{*}}
\DeclareMathOperator{\Irr}{Irr}
\DeclareMathOperator{\Lf}{\mathbf{L}}
\DeclareMathOperator{\Sf}{\mathbf{S}}
\DeclareMathOperator{\Tf}{\mathbf{T}}
\DeclareMathOperator{\Pf}{\mathbf{P}}
\newcommand{\fr}{\mathsf{F}}
\newcommand{\frd}{\mathsf{F}^*}
\newcommand{\dl}[2]{\mathcal{E}(#1,#2)}
\newcommand{\dll}[2]{\mathcal{E}_{\ell}(#1,#2)}
\newcommand{\Aa}{\mathbf{A}}
\newcommand{\Bb}{\mathbf{B}}
\newcommand{\Cc}{\mathbf{C}}
\newcommand{\Dd}{\mathbf{D}}
\newcommand{\Ee}{\mathbf{E}}
\newcommand{\Ff}{\mathbf{F}}
\newcommand{\Gg}{\mathbf{G}}
\DeclareMathOperator{\rank}{rank}
\DeclareMathOperator{\defect}{defect}
\DeclareMathOperator{\Rep}{Rep}
\NewDocumentCommand{\rep}{ O{} O{} m }{\Rep_{#1}^{#2}(#3)}
\newcommand{\TG}{\mathcal{T}(G)}
\newcommand{\Ql}{\ladic{Q}}
\newcommand{\Zl}{\ladic{Z}}
\DeclareMathOperator{\BT}{BT}
\newcommand{\simu}{\sim^1}
\def\NM{{\mathbb{N}}}
\def\QM{{\mathbb{Q}}}
\def\FM{{\mathbb{F}}}
\def\ZM{{\mathbb{Z}}}
\def\CM{{\mathbb{C}}}
\def\BG{{\mathfrak B}}
\def\tG{{\mathfrak t}}
\def\ZG{{\mathfrak Z}}
\def\HG{{\mathfrak H}}
\def\TG{{\mathfrak T}}
\def\HC{{\mathcal H}}
\def\OC{{\mathcal O}}
\def\LC{{\mathcal L}}
\def\EC{{\mathcal E}}
\def\GC{{\mathcal G}}
\def\BC{{\mathcal B}}
\def\tr{{\rm tr}}
\def\simto{\buildrel\hbox{\tiny{$\sim$}}\over\longrightarrow}
\def\leq{\leqslant}
\def\geq{\geqslant}
\def\injo{\hookrightarrow}
\def\o#1{\overline{#1}}
\def\To#1{\buildrel\hbox{\tiny{$#1$}}\over\longrightarrow}
\def\ker{{\rm ker}}
\def\Hom{\mathop{\hbox{\rm Hom}}\nolimits}
\def\Rep{{\rm {R}ep}}
\def\dim{\mathop{\mbox{\rm dim}}\nolimits}
\def\dim{{\rm dim}}
\def\BT{{\BC}}
\def\Ql{\QM_{\ell}}
\def\Zl{\ZM_{\ell}}
\def\Fr{{\rm Fr}}
\def\HG{{\hat G}}
 \def\HG{\widehat{\mathbf{G}}}
 \def\HT{\widehat{\mathbf{T}}}
 \def\HB{\widehat{\mathbf{B}}}
\providecommand\sslash{\mathbin{/\mkern-5.5mu/}}
\title{Depth zero representations over $\overline{\mathbb{Z}}[\frac 1p]$}
\author{Jean-François Dat}
\address{Jean-Fran\c cois Dat, Institut de Mathématiques de Jussieu, Sorbonne Université -- Université de
  Paris -- CNRS, 4 Place Jussieu, 75252 Paris cedex.}
\thanks{JFD acknowledges the support of the ANR  through the grant ``Coloss'' ANR-19-CE40-0015} 
\email{jean-francois.dat@imj-prg.fr}
\author{Thomas Lanard}
\address{Thomas Lanard, CNRS, Laboratoire de mathématiques de Versailles, Université Paris-Saclay, UVSQ, 78000, Versailles, France}
\email{thomas.lanard@uvsq.fr}
\begin{document}

\begin{abstract}
  We consider the category of depth $0$ representations of a $p$-adic
  quasi-split reductive group with coefficients in $\o\ZM[\frac 1p]$.
  We prove that the blocks of this category are in natural
  bijection with the connected components of the space of tamely ramified Langlands
  parameters for $G$ over $\o\ZM[\frac 1p]$. As a particular case, this depth $0$
  category is thus indecomposable when the group is tamely ramified. Along the way we prove a
  similar result for finite reductive groups. As an application, we deduce that the semi-simple
  local Langlands correspondence $\pi\mapsto \varphi_{\pi}$
  constructed by Fargues and Scholze takes depth $0$ representations
  to tamely ramified parameters, using a motivic version of their construction recently
  announced by Scholze. We also bound the restriction of
  $\varphi_{\pi}$ to tame inertia in terms of the Deligne-Lusztig parameter
  of $\pi$ and show, in particular, that $\varphi_{\pi}$ is unramified
  if $\pi$ is unipotent.
\end{abstract}

\maketitle

\section{Main results}

We prove two results on the representation theory of finite reductive
groups and on that of $p$-adic reductive groups. We first state these results and then
explain our motivations and some connections to the existing literature.

\begin{The}[Theorem \ref{primitive}]
  Let $\Gf$ be a reductive group over
  $\o\FM_{p}$ and $\fr$ the Frobenius map associated to a $\FM_{p^{r}}$-rational structure for some $r\geq 1$. Then
  the category $\Rep_{\o\ZM[\frac 1p]}(\Gf^{\fr})$ is indecomposable. Equivalently, the central idempotent $1$ in
  $\o\ZM[\frac 1p]\Gf^{\fr}$ is primitive.
\end{The}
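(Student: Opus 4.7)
The plan is to reduce indecomposability to a combinatorial statement about semisimple conjugacy classes in the dual group $\Gfd^{\frd}$. Two ingredients drive the argument: a descent of central idempotents from $\o\ZM[\frac 1p]$ to each $\bZl$ (for $\ell\neq p$), and the Broué--Michel description of the $\ell$-blocks of the finite reductive group $\Gf^{\fr}$.

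\textbf{Step 1: descent to $\bZl$.} Using that $\o\ZM[\frac 1p]$ equals the intersection of the valuation rings $\bZl$ (for $\ell\neq p$) inside $\o\QM$, one sees that the central idempotents of $\o\ZM[\frac 1p]\Gf^{\fr}$ are exactly the expressions $e_T = \sum_{\chi\in T} e_\chi$, with $T\subseteq\Irr_{\o\QM}(\Gf^{\fr})$, that lie in $\bZl\Gf^{\fr}$ for every $\ell\neq p$; equivalently, such $T$ must be a union of $\ell$-blocks for every $\ell\neq p$.

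\textbf{Step 2: Broué--Michel.} Lusztig's Jordan decomposition groups $\Irr_{\o\QM}(\Gf^{\fr})$ into rational series $\dl{\Gf^{\fr}}{s}$ indexed by semisimple conjugacy classes $(s)\in\Sigma$ in $\Gfd^{\frd}$. The Broué--Michel theorem then asserts that each set $\dll{\Gf^{\fr}}{s}:=\bigsqcup_{t_{\ell'}\sim s_{\ell'}}\dl{\Gf^{\fr}}{t}$ is a union of $\ell$-blocks. Consequently, any $T$ satisfying the condition of Step 1 corresponds (after a reconciliation discussed below) to a subset $A\subseteq\Sigma$ that is saturated under the ``$\ell'$-part'' map $\pi_\ell\colon (s)\mapsto(s_{\ell'})$ for every $\ell\neq p$.

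\textbf{Step 3: combinatorics.} It suffices to show that such an $A$ is $\emptyset$ or $\Sigma$. Assume $A\neq\emptyset$ and pick $(s)\in A$; the order $|s|$ is automatically coprime to $p$. For any prime $\ell\mid |s|$, the class $(s_{\ell'})$ lies in the same $\pi_\ell$-fiber as $(s)$, hence $(s_{\ell'})\in A$; since $|s_{\ell'}|<|s|$, iterating yields $(1)\in A$. Conversely, for any $(t)\in\Sigma$, an induction on $|t|$ gives $(t)\in A$: pick a prime $\ell\mid|t|$, note $(t_{\ell'})\in A$ by the inductive hypothesis, and conclude $(t)\in A$ since $(t)$ lies in the same $\pi_\ell$-fiber as $(t_{\ell'})$. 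Hence $A=\Sigma$, $T=\Irr_{\o\QM}(\Gf^{\fr})$, and $1$ is primitive.

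\textbf{Main obstacle.} The critical difficulty lies in Step 2: a priori a union of $\ell$-blocks need not be a union of Broué--Michel sets, because $\ell$-blocks can strictly refine the Broué--Michel partition for bad primes. Reconciling this likely requires a finer analysis of the block structure, for instance via the Bonnafé--Rouquier/Cabanes--Enguehard parametrisation of $\ell$-blocks by pairs $(s,\beta)$ with $\beta$ a unipotent $\ell$-block of $C_{\Gfd}(s)^{\frd}$, to show that the coherence condition imposed by varying $\ell$ still forces saturation of $A$ under every $\pi_\ell$. Once this reconciliation is established, the remaining descent and combinatorial arguments are essentially formal.
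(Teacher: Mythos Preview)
Your Steps 1 and 3 are correct and indeed close to what the paper does, but the ``reconciliation'' you flag in Step 2 is not a technicality to be patched: it is the entire content of the theorem, and your proposed route around it does not work.

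Concretely, Brou\'e--Michel only tells you that each $\dll{\Gf^{\fr}}{s}$ is a union of $\ell$-blocks; it says nothing about whether a union of $\ell$-blocks for all $\ell\neq p$ must be a union of rational series. Your Step~3 argument \emph{presupposes} this. If one defines $A:=\{(s):T\cap\dl{\Gf^{\fr}}{s}\neq\emptyset\}$, then to get even the implication $(t)\in A\Rightarrow (t_{\ell'})\in A$ one needs, in addition to Brou\'e--Michel, the theorem of Hiss that every $\ell$-block inside $\dll{\Gf^{\fr}}{s}$ meets $\dl{\Gf^{\fr}}{s}$; you omit this. With Hiss in hand, your descent to $(1)$ goes through and one recovers exactly the paper's Proposition~\ref{prounipotent}: any $\pi$ is $\sim$-equivalent to some unipotent representation. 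But the \emph{reverse} direction of your saturation claim (that $(t_{\ell'})\in A\Rightarrow (t)\in A$) has no such input, and in fact both $A(T)$ and $A(T^{c})$ can contain $(1)$ simultaneously. So Step~3 does not conclude; what remains is precisely: show that $\dl{\Gf^{\fr}}{1}$ lies in a single $\sim$-class.

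This residual statement is where all the work is. The paper proves it by reducing to simple adjoint $\Gf$ (Proposition~\ref{proreduceadjoint}), then invoking the Cabanes--Enguehard identification of unipotent $\ell$-blocks with $d$-series for good $\ell$ (Theorem~\ref{thelblock}), and finally carrying out a type-by-type analysis: $2$-block theory for classical groups in odd characteristic, Lusztig-symbol combinatorics and Ennola duality for classical groups in even characteristic, and explicit tables of $d$-series (plus decomposition matrices of $\Ff_4(2)$ and $\Gg_2(q)$ at $\ell=2,3$) for the exceptional types. Your suggested reconciliation via the Bonnaf\'e--Rouquier/Cabanes--Enguehard parametrisation of $\ell$-blocks by pairs $(s,\beta)$ does not circumvent this: at $s=1$ the datum $\beta$ is exactly a unipotent $\ell$-block of $\Gf^{\fr}$, and the coherence condition across $\ell$ is nothing other than the statement that all unipotent $\ell$-blocks merge under $\sim$, which is what has to be proved.
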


This result initially appeared as one step in our study of the $p$-adic case below. We
have decided to single it out because the statement is simple and quite natural. It might
be an interesting problem to try and devise criteria for a similar statement to hold true for an
abstract finite group $G$ and a prime divisor $p$ of $|G|$.

\medskip

Let now $\Gf$ be  a reductive group over a local non-archimedean field $F$
with residue field $k_{F}:=\FM_{p^{r}}$. We put $G:=\Gf(F)$. For any
commutative ring $R$ in which $p$ is invertible, we denote by
$\Rep_{R}(G)$ the category of smooth $R G$-modules. The Bernstein center
$\ZG_{R}(G)$  is by definition the center of this category. We refer
to subsection~\ref{sec:depth-0-summand} for the definition of
\emph{depth $0$} smooth $R G$-modules. They form a direct factor
subcategory $\Rep_{R}^{0}(G)$, which corresponds to some idempotent
$\varepsilon_{0}\in \ZG_{R}(G)$.  The following statement is a sample
of what we prove about $\Rep_{\o\ZM[\frac 1p]}^{0}(G)$.

\begin{The}[Theorem \ref{quasisplittame}] \label{the_tame}
  Suppose that $\Gf$ is quasi-split and tamely ramified over
  $F$. Then the abelian category $\Rep_{\o\ZM[\frac 1p]}^{0}(G)$ is
  indecomposable. Equivalently,   $\varepsilon_{0}$
  is a primitive idempotent of $\ZG_{\o\ZM[\frac 1p]}(G)$.
\end{The}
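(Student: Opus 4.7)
The plan is to propagate the indecomposability result of Theorem~\ref{primitive} from finite reductive groups to the $p$-adic setting via a facet-by-facet analysis on the Bruhat--Tits building.

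Given any central idempotent $\varepsilon$ of $\Rep_{\o\ZM[\frac 1p]}^{0}(G)$, I would attach to each facet $\sigma$ of $\mathcal{B}(\Gf,F)$ a central idempotent $\varepsilon_{\sigma}\in\o\ZM[\frac 1p][\quotred{L}{\sigma}]$, where $\quotred{L}{\sigma}$ denotes the finite reductive quotient of the connected parahoric $P_{\sigma}^{\circ}$ by its pro-$p$ radical $P_{\sigma}^{+}$. This is obtained through the exact functor $V\mapsto V^{P_{\sigma}^{+}}$, which intertwines $\ZG_{\o\ZM[\frac 1p]}(G)$ with the center of $\o\ZM[\frac 1p][\quotred{L}{\sigma}]$. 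The tame quasi-split hypothesis enters here to guarantee that the parahoric group schemes are smooth with connected reductive special fibres, so that $\quotred{L}{\sigma}$ is of the form studied in Theorem~\ref{primitive}.

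By that theorem, the unit is the unique primitive central idempotent of $\o\ZM[\frac 1p][\quotred{L}{\sigma}]$, so $\varepsilon_{\sigma}\in\{0,1\}$ for every $\sigma$. This yields a function $\sigma\mapsto\varepsilon_{\sigma}$ with values in $\{0,1\}$ on the set of facets, and the heart of the proof is to show it is globally constant. For adjacent facets, say $\tau$ a face of $\o{\sigma}$, the nested parahorics realize one of $\quotred{L}{\sigma},\quotred{L}{\tau}$ as the Levi quotient of a parabolic subgroup of the other, and the compatibility of the two fixed-point functors, combined with the nonvanishing of Harish--Chandra induction/restriction between the (unique) blocks, forces $\varepsilon_{\sigma}=\varepsilon_{\tau}$. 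Connectivity of the building then gives a single global value. If it is $0$, then $\varepsilon$ annihilates all depth-$0$ representations (each being generated by its $P_{\sigma}^{+}$-invariants for a small enough $\sigma$), so $\varepsilon=0$; if it is $1$, then $\varepsilon$ acts as the identity on $\Rep_{\o\ZM[\frac 1p]}^{0}(G)$, so $\varepsilon=\varepsilon_{0}$. Either way, $\varepsilon_{0}$ is primitive in $\ZG_{\o\ZM[\frac 1p]}(G)$.

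The main obstacle will be establishing the precise compatibility between $\varepsilon_{\sigma}$ and $\varepsilon_{\tau}$ for adjacent facets. This requires tracing central idempotents through parahoric inclusions and the associated parabolic restriction between finite reductive groups, and it is not completely formal; in particular one must rule out the possibility that a primitive idempotent on one facet transitions to a sum of several primitive idempotents on another. This is also the step where the tame quasi-split hypothesis becomes essential: in more general situations, the transition maps can genuinely separate blocks on distinct facets, producing the multiple connected components of the tame Langlands parameter space alluded to in the abstract.
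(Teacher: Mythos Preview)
There is a genuine gap in your argument, at the step where you attach to $\varepsilon$ a central idempotent $\varepsilon_{\sigma}\in\o\ZM[\frac 1p][\bar G_{\sigma}]$. The functor $V\mapsto V^{G_{\sigma}^{+}}$ certainly carries the action of $\ZG_{\o\ZM[\frac 1p]}(G)$ to $\bar G_{\sigma}$-equivariant natural endomorphisms, but there is no reason for such an endomorphism to be given by an element of $Z(\o\ZM[\frac 1p][\bar G_{\sigma}])$. Concretely, $\varepsilon * e_{\sigma}^{+}$ is a central idempotent of the Hecke algebra $e_{\sigma}^{+}\HC_{\o\ZM[\frac 1p]}(G)e_{\sigma}^{+}$, which strictly contains $\o\ZM[\frac 1p][\bar G_{\sigma}]$ and has many more idempotents; Theorem~\ref{primitive} says nothing about those. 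Already for $G$ a torus the depth-$0$ Bernstein center is $\o\ZM[\frac 1p][\bar T_{\sigma}][t^{\pm 1},\ldots]$ rather than $\o\ZM[\frac 1p][\bar T_{\sigma}]$, and its idempotents need not lie in the latter.

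This is not repairable as stated, because your argument never uses the tame quasi-split hypothesis in an essential way: the reductive quotients $\bar G_{\sigma}$ of parahoric subgroups are \emph{always} connected reductive groups over $k_{F}$, regardless of ramification. So if your argument were valid it would prove indecomposability of $\Rep_{\o\ZM[\frac 1p]}^{0}(G)$ for every quasi-split $\Gf$, contradicting Theorem~\ref{thm_main} whenever $\pi_{1}(T)_{p\text{-}\rm tors}\neq 0$ (e.g.\ for suitable wildly ramified tori).

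What the paper actually does is close to your intuition but with an extra layer. The facet-by-facet argument you sketch \emph{is} carried out (Corollary~\ref{coridemp0}), but only for those idempotents $\varepsilon_{T}$ built from \emph{unrefined} depth-$0$ types; these, by construction, do localize to central idempotents of $\o\ZM[\frac 1p][\bar G_{\sigma}]$, so Theorem~\ref{primitive} applies. The remaining issue is that an arbitrary primitive idempotent of $\varepsilon_{0}\ZG_{\o\ZM[\frac 1p]}(G)$ need not be of this form: the Bernstein blocks refining a given $\varepsilon_{[\tG]}$ are governed by extensions from $G_{\sigma}$ to the larger stabilizer $G_{\sigma}^{\dagger}$, and these are permuted transitively by the finite group $\Psi_{G}^{f}(\o\ZM[\frac 1p])$ of unramified characters coming from the Kottwitz map. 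It is only after showing that this action factors through $\Psi_{T}^{f}(\o\ZM[\frac 1p])_{p}$ (Lemmas~\ref{lemprimetop} and~\ref{lemmeZ}) that the tame quasi-split hypothesis finally enters, to force $\pi_{1}(T)_{p\text{-}\rm tors}=0$ and hence triviality of this acting group.
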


This result was mainly inspired by its ``dual'' counterpart in
\cite{DHKM}, where the moduli space
$Z^{1}(W_{F}^{0},\HG)$ of Langlands parameters for $\mathbf{G}$ was constructed over
$\o\ZM[\frac 1p]$ and studied. Concretely, $Z^{1}(W_{F}^{0},\HG)$ classifies $1$-cocycles
$W_{F}^{0}\To{}\HG$ where :
\begin{itemize}
  \item  $\HG$ denotes the dual  group of
        $\mathbf{G}$, considered as a split pinned reductive group scheme over
        $\o\ZM[\frac 1p]$, and endowed with an action of the Galois group $\Gamma_{F}$ of $F$ that
        preserves the  pinning
  \item  $W_{F}^{0}\subset W_{F}\subset \Gamma_{F}$ is some
        modification of the Weil group of $F$.
\end{itemize}
In \cite{DHKM}, this moduli space is decomposed according to the restriction of $1$-cocycles
to the wild inertia subgroup $P_{F}\subset W_{F}^{0}$. In particular, the ``tame'' summand
$Z^{1}(W_{F}^{0},\HG)_{\rm tame}$ parametrizes $1$-cocycles whose restriction to $P_{F}$
is locally (for the \'etale topology) conjugate to the trivial cocycle. According to \cite[Thm 4.29]{DHKM}, this
summand is connected, provided that $\mathbf{G}$ is tamely ramified. Since tame
parameters are supposed to correspond to depth $0$ representations by any form of local
Langlands correspondence, we like to see the last theorem as the group side analogue of this
connectedness result on the parameter side.  Interestingly, the proof of \cite[Thm
  4.29]{DHKM} consists in, first, classifying the connected components of
$Z^{1}(W_{F}^{0},\HG)_{_{\o\ZM_{\ell}},\rm tame}$ for each $\ell\neq p$, and then, using
different $\ell$'s to get the result. Similarly, one way to formulate the
indecomposability of $\Rep^{0}_{\o\ZM[\frac 1p]}(G)$ is as follows (we refer to \ref{remblocksequences} for the notion of $\ell$-block used here).
\begin{Cor} Under the same hypothesis on $\mathbf{G}$,
  given $\pi,\pi'$ two irreducible $\o\QM G$-modules of depth $0$, there is a sequence of
  primes $\ell_{1},\cdots, \ell_{r}$ and a sequence
  $\pi_{0}=\pi,\pi_{1},\cdots,\pi_{r}=\pi'$ of irreducible $\o\QM G$-modules such that
  $\pi_{i-1}$ and $\pi_{i}$ belong to the same $\ell_{i}$-block for each $i=1,\cdots,r$.
\end{Cor}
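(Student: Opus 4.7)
The plan is to deduce the corollary from Theorem \ref{quasisplittame} by identifying the equivalence classes described in the statement with the blocks of $\Rep_{\o\ZM[\frac 1p]}^{0}(G)$. For each prime $\ell\neq p$, fix an embedding $\o\QM\hookrightarrow \o\QM_{\ell}$, so that irreducible depth $0$ $\o\QM G$-modules may be identified with irreducible $\o\QM_{\ell}G$-modules and a notion of ``$\ell$-block'' for them makes sense as an idempotent summand of $\Rep_{\o\ZM_{\ell}}^{0}(G)$. Let $\sim$ denote the equivalence relation on $\Irr_{\o\QM}^{0}(G)$ generated by ``lies in the same $\ell$-block'' as $\ell$ varies over primes different from $p$: the corollary asserts that $\sim$ has a single class.

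First I would check that the $\sim$-classes are precisely the blocks of $\Rep_{\o\ZM[\frac 1p]}^{0}(G)$, so that Theorem \ref{quasisplittame} then gives the result. In one direction, base-change of central idempotents along $\o\ZM[\frac 1p]\to\o\ZM_{\ell}$ shows that every $\ell$-block is contained in a single $\o\ZM[\frac 1p]$-block, so by transitivity each $\sim$-class is contained in an $\o\ZM[\frac 1p]$-block. In the other direction, given a $\sim$-class $C$, consider the central idempotent $e_{C}\in\ZG_{\o\QM}^{0}(G)$ acting as the identity on irreducibles in $C$ and as $0$ on the others. By construction, for each $\ell\neq p$ the class $C$ is a union of $\ell$-blocks, so $e_{C}$ is a sum of $\ell$-block idempotents and therefore lies in $\ZG_{\o\ZM_{\ell}}^{0}(G)$. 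The expected identification
\[ \ZG_{\o\ZM[\frac 1p]}^{0}(G) \;=\; \ZG_{\o\QM}^{0}(G) \cap \bigcap_{\ell\neq p} \ZG_{\o\ZM_{\ell}}^{0}(G) \]
then forces $e_{C}\in \ZG_{\o\ZM[\frac 1p]}^{0}(G)$, so the $\o\ZM[\frac 1p]$-block containing $C$ cannot be strictly larger than $C$, and the two partitions coincide.

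The main obstacle is this intersection identity for Bernstein centers, which essentially encodes the classical fact $\o\ZM[\frac 1p]=\bigcap_{\ell\neq p}\o\ZM_{(\ell)}$ in $\o\QM$ together with a suitable description of depth $0$ central idempotents (for instance as locally constant functions on the Bernstein variety, cut out by integrality at each $\ell\neq p$). This should already be available from the setup of the depth $0$ summand in Subsection~\ref{sec:depth-0-summand}; once it is in place, the corollary follows from the primitivity of $\varepsilon_{0}$ given by Theorem \ref{quasisplittame} purely formally.
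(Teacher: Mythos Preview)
Your approach is essentially the one the paper takes: the corollary is deduced from Theorem \ref{quasisplittame} by identifying the $\sim$-classes with the blocks of $\Rep_{\o\ZM[\frac 1p]}^{0}(G)$, exactly as sketched in Remark \ref{remblocksequences}. The only substantive difference is in how the ``intersection identity'' is packaged. You state it abstractly as
\[
\ZG_{\o\ZM[\frac 1p]}^{0}(G) \;=\; \ZG_{\o\QM}^{0}(G) \cap \bigcap_{\ell\neq p} \ZG_{\o\ZM_{\ell}}^{0}(G),
\]
which is correct in spirit but not something directly available from \S\ref{sec:depth-0-summand}. The paper instead uses the concrete integer $N=\mathrm{lcm}_{\tau}\,|\o G_{\tau}|$ introduced in the ``Systems of idempotents'' subsection: each depth $0$ Bernstein idempotent $\varepsilon_{[\tG]}$ already lies in $\ZG_{\o\ZM[\frac 1N]}(G)$, so any $e_{C}$ is a finite $\o\ZM[\frac 1N]$-combination, and $\ell$-integrality becomes the statement $e_{C}\in\ZG_{\o\ZM[\frac 1{N_{\ell'}}]}(G)$. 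The intersection then reduces to the elementary equality $\o\ZM[\frac 1p]=\bigcap_{\ell\mid N,\,\ell\neq p}\o\ZM[\frac 1{N_{\ell'}}]$ inside $\o\ZM[\frac 1N]$, exactly parallel to Proposition \ref{proprimitif} in the finite group case. This is what fills the gap you flag as the ``main obstacle''; with that input your sketch becomes a complete proof, identical in content to the paper's.
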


We note that Sécherre and Stevens have used in \cite{SecherreStevensJL} a similar statement in the context of inner
forms of $GL(n)$ (but for arbitrary ``endoclasses'') in order to gain control on the
Jacquet-Langlands correspondence for \emph{complex} representations. This provides a
striking example of how to use this kind of results for problems a priori unrelated to
congruences. The application we give in \ref{sec:sample-potent-appl} below is actually in
the same vein.

\medskip

Meanwhile, let us observe that it is not always true, even for $\mathbf{G}$ a torus, that the tame summand of
$Z^{1}(W_{F}^{0},\HG)$ is connected. In Theorem \ref{thm_main_dual}, we work out the decomposition of
$Z^{1}(W_{F}^{0},\HG)_{\rm tame}$ into connected components for general $\mathbf{G}$, and
we prove in particular that these connected components are simply transitively permuted by a
certain abelian $p$-group of ``central cocycles''. On the other hand,
in Theorem \ref{thm_main}, we work out the decomposition of $\Rep^{0}_{\o\ZM[\frac 1p]}(G)$ into a
product of blocks for quasi-split $G$, and prove  that these blocks
are simply transitively permuted by
a certain abelian $p$-group of characters of $G$. After identifying
these two $p$-groups and matching the principal component with the principal block, we then conclude:

\begin{The}\label{the_main}
  Assume $\mathbf{G}$ is quasi-split over $F$. Then there is a natural bijection between
  connected components of $Z^{1}(W_{F}^{0},\HG)_{\rm tame}$ and blocks of $\Rep^{0}_{\o\ZM[\frac 1p]}(G)$.
\end{The}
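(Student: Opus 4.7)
The plan is to combine the two decomposition results announced just before the statement (Theorem~\ref{thm_main_dual} on the Galois side and Theorem~\ref{thm_main} on the group side) with the indecomposability statement of Theorem~\ref{quasisplittame}, which serves as the anchor matching the ``principal'' objects on both sides. Concretely, Theorem~\ref{thm_main_dual} provides an abelian $p$-group $A^{*}$ of central cocycles acting simply transitively on $\pi_{0}(Z^{1}(W_{F}^{0},\HG)_{\rm tame})$, while Theorem~\ref{thm_main} provides an abelian $p$-group $A$ of characters of $G$ acting simply transitively on the set of blocks of $\Rep^{0}_{\o\ZM[\frac 1p]}(G)$. Once both sets are torsors under compatible transitive group actions, producing a bijection reduces to (a)~identifying the two groups and (b)~matching one distinguished element on each side.

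For step (a), I would construct a natural isomorphism $A \simeq A^{*}$ by going through local class field theory. The characters of $G$ appearing in $A$ should be the $p$-power order characters factoring through some natural finite quotient of $G$ attached to the center, while the central cocycles in $A^{*}$ should identify with $p$-power torsion in $H^{1}(W_{F}^{0}, Z(\HG)_{\rm tame})$; pairing with the abelianisation of the tame quotient of $W_{F}$ on one side and the Kottwitz-type map for $G$ on the other side should give the identification. I would reduce the check to the case of tori (where local Langlands is well defined integrally), and compatibility with the group actions then follows from the naturality of both constructions in $\mathbf{G}$.

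For step (b), the distinguished element on the parameter side is the connected component of the trivial cocycle. Under the wild/tame decomposition of \cite{DHKM} this is precisely the ``principal'' component, and by \cite[Thm 4.29]{DHKM} it is indeed a single connected component once $\mathbf{G}$ is tamely ramified. The distinguished element on the group side is the block containing the trivial representation; Theorem~\ref{quasisplittame} applied to $\mathbf{G}$ (which is tame and quasi-split) tells us that this principal block is in fact all of $\Rep^{0}_{\o\ZM[\frac 1p]}(G)_{\rm princ}$, i.e.\ really a single block. To reduce the general quasi-split (not necessarily tame) case to the tame one, one uses that the decomposition along restriction to wild inertia on the Galois side is mirrored on the representation side by a decomposition along inertial types of depth-zero cuspidal supports; in both pictures, the ``principal'' summand is governed by a tame quasi-split endoscopic-type group to which Theorem~\ref{quasisplittame} and \cite[Thm 4.29]{DHKM} directly apply.

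Finally, equivariance of both $A^{*}$- and $A$-actions under the isomorphism $A \simeq A^{*}$, combined with simple transitivity, transports the match of principal objects into the desired bijection of all components with all blocks. I expect the main obstacle to lie in step~(a), namely in checking that the identification $A \simeq A^{*}$ really does send central coboundaries to the trivial character of $G$ and is compatible with the action coming from tensoring representations by characters versus twisting cocycles by central $1$-cocycles; this is a concrete compatibility between two $\o\ZM[\tfrac{1}{p}]$-integral constructions and will likely be proved by direct computation after reduction to tori, building on the integral Langlands correspondence in that case.
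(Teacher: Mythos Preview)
Your overall torsor strategy---identify the acting groups, then match base points---is exactly what the paper does. But two of your steps are off the mark.

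For step~(a), the identification $A\simeq A^{*}$ is far simpler than you anticipate. On the group side the acting group is $\Psi_{T}^{f}(\o\ZM[\frac 1p])_{p}$, and by Remark~\ref{rempsidual} applied to $\mathbf{T}$ (using only the tautology $X_{*}(\mathbf{T})=X^{*}(\HT)$) one has $\Psi_{T}=(\HT^{I_{F}})_{\fr}$ as diagonalizable group schemes over $\o\ZM[\frac 1p]$, hence $\Psi_{T}^{f}(\o\ZM[\frac 1p])_{p}=\pi_{0}(\HT^{I_{F}})_{\fr}$. This is literally the group in Theorem~\ref{thm_main_dual}; no class field theory computation, no separate check on coboundaries, is needed.

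Step~(b) contains a genuine detour and a gap. You invoke Theorem~\ref{quasisplittame} (which requires $\mathbf{G}$ tame) and then propose to reduce the general quasi-split case to the tame one via an unspecified ``endoscopic-type group''. This reduction is neither carried out nor needed: Theorems~\ref{thm_main} and~\ref{thm_main_dual} already establish the simply transitive torsor structures for \emph{arbitrary} quasi-split $\mathbf{G}$. In particular, connectedness of the principal fiber on the dual side is obtained in the proof of Theorem~\ref{thm_main_dual} by applying \cite[Thm~4.29]{DHKM} to $\HG^{P_{F},\circ}$ with its $W_{F}/P_{F}$-action (which always preserves a pinning), not to $\HG$ itself---so no tameness hypothesis on $\mathbf{G}$ enters. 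The base-point matching in the paper is then just: principal block $\leftrightarrow$ principal component, justified by compatibility with parabolic induction from $\mathbf{T}$ and the Langlands correspondence for tori. Theorem~\ref{quasisplittame} is a corollary of this picture, not an input to it.
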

Again, this implies that, under the same quasi-splitness hypothesis,
a $\pi\in\Irr_{\o\QM}(G)$ of depth $0$ can be connected to a
depth $0$ character of $p$-power order through a sequence of ``congruences'' modulo
different primes.

\medskip

For a non quasi-split group $\mathbf{G}$, there is in general an additional ``relevance'' condition on
$1$-cocycles for them to provide Langlands parameters of $G$. Although this relevance
condition might mess up with connected components, we believe it does not actually happen,
i.e. the above result should be true with no quasi-split assumption. As evidence for
this expectation, we prove:

\begin{The}[Corollary \ref{cornqs}]
  Suppose that $p$ does not divide $ |\pi_{1}(\Gf_{\rm der})|$ and the torus $\Gf_{\rm ab}:=\Gf/[\Gf,\Gf]$ is
  $P_{F}$-induced. Then $\Rep_{\o\ZM[\frac 1p]}^{0}(G)$ is
  indecomposable. 
\end{The}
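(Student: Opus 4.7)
The approach is to adapt the strategy used for Theorems~\ref{quasisplittame} and \ref{the_main}, taking advantage of the two hypotheses to eliminate the obstructions that appear in the non quasi-split setting. The proof should have three stages: (i) decompose $\Rep^{0}_{\overline{\mathbb{Z}}[\frac{1}{p}]}(G)$ by parahoric-theoretic data; (ii) apply Theorem~\ref{primitive} at each parahoric to get an indecomposable piece at the level of the reductive quotient; and (iii) glue adjacent parahorics via connectedness of the Bruhat--Tits building together with Jacquet-type functors. Stages (i) and (ii) go through as in the quasi-split case, so the core of the argument is stage (iii).

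For stage (iii), the gluing is obstructed precisely by an abelian $p$-group of characters of $G$ of $p$-power order acting on blocks (the analogue, on the representation side, of the ``central cocycles'' $p$-group of Theorem~\ref{thm_main_dual}); this is the source of the extra blocks that can appear for non quasi-split groups. The plan is to show that under our two hypotheses this $p$-group is trivial. The hypothesis $p \nmid |\pi_{1}(\mathbf{G}_{\rm der})|$ ensures that characters of $G$ of $p$-power order factor through the abelianization $\mathbf{G}_{\rm ab}(F)$: the obstruction to lifting through the exact sequence attached to $\mathbf{G}_{\rm sc} \to \mathbf{G}_{\rm der}$ is measured by Galois cohomology of $\ker(\mathbf{G}_{\rm sc} \to \mathbf{G}_{\rm der})$, which has order a power of $|\pi_{1}(\mathbf{G}_{\rm der})|$ and hence no $p$-part. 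The hypothesis that $\mathbf{G}_{\rm ab}$ is $P_{F}$-induced then implies that $\mathbf{G}_{\rm ab}(F)$ has no non-trivial depth-$0$ character of $p$-power order: by Shapiro's lemma one reduces to the case $\mathrm{Res}_{E/F}\mathbb{G}_{m}$ for a tamely ramified finite extension $E/F$, and depth-$0$ characters of $E^{\times}$ factor through the residue field $k_{E}^{\times}$, whose order is prime to $p$.

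The main obstacle will be stage (iii) itself: making precise, for a general reductive $\mathbf{G}$, the statement that the only obstruction to indecomposability is the $p$-group of character twists described above. In the quasi-split tame case this is built into the proof of Theorem~\ref{quasisplittame} via the full block classification of Theorem~\ref{the_main}, but for non quasi-split $\mathbf{G}$ one cannot appeal to Theorem~\ref{the_main}. One natural way around this is to compare the parahorics of $G$ with those of its quasi-split inner form $G^{*}$, which share the same absolute root datum and (after choosing a compatible point in the building) the same reductive quotients up to Frobenius twist; Theorem~\ref{primitive} can then be invoked on both sides simultaneously, and the character-twist action, having been shown trivial, leaves no room for extra blocks on the $G$-side beyond those already accounted for on the $G^{*}$-side.
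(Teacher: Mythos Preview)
Your overall strategy is in the right spirit, but you misidentify where the real work lies, and your handling of the $P_F$-induced hypothesis contains a genuine error.

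The ``main obstacle'' you flag in your last paragraph is not an obstacle at all. The transitivity of the character-twist action on the primitive idempotents of $\varepsilon_0\ZG_{\o\ZM[\frac 1p]}(G)$ is Corollary~\ref{corpsiinvariant}, and it is proved for \emph{arbitrary} $\mathbf{G}$, with no quasi-split assumption. It follows directly from Corollary~\ref{coridemp0} (which is where Theorem~\ref{primitive} is applied at each parahoric, via consistent systems of idempotents) together with the Morris description of depth~$0$ blocks. There is therefore no need to compare with the quasi-split inner form: once Corollary~\ref{corpsiinvariant} and Lemma~\ref{lemprimetop} (also valid for general $\mathbf{G}$) are in hand, the only thing left is to show that $\Psi_G^f(\o\ZM[\frac 1p])_p = \Hom(\pi_1(G)_{p-\rm tors}, \o\ZM[\frac 1p]^\times)$ is trivial. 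Framing this in terms of arbitrary ``depth-$0$ characters of $G$ of $p$-power order'' and their factorization through $\mathbf{G}_{\rm ab}(F)$ is an unnecessary detour; the relevant characters are by definition those factoring through the Kottwitz map, so one should simply compute $\pi_1(G)_{p-\rm tors}$.

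For that computation, your argument has a gap. You claim that ``$P_F$-induced'' allows a Shapiro reduction to $\mathrm{Res}_{E/F}\mathbb{G}_m$ with $E/F$ \emph{tamely ramified}. But $P_F$-induced only says that $X_*(\mathbf{G}_{\rm ab})$ has a basis permuted by the wild inertia subgroup $P_F$; it says nothing about the full $\Gamma_F$-module structure and certainly does not force $\mathbf{G}_{\rm ab}$ to decompose as a product of Weil restrictions along tame extensions. The paper instead argues directly: from the exact sequence $\pi_1(\mathbf{G}_{\rm der}) \hookrightarrow \pi_1(\mathbf{G}) \twoheadrightarrow X_*(\mathbf{G}_{\rm ab})$ one passes to $I_F$-coinvariants and then to $p$-torsion. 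The first hypothesis kills the left-hand term. For the right-hand term, $P_F$-induced means precisely that $X_*(\mathbf{G}_{\rm ab})_{P_F}$ is torsion-free; since $I_F/P_F$ acts through a finite quotient of order prime to $p$, it follows that $X_*(\mathbf{G}_{\rm ab})_{I_F}$ has no $p$-torsion either. Hence $\pi_1(G)_{p-\rm tors}=0$, and one concludes immediately from Corollary~\ref{corpsiinvariant} and Lemma~\ref{lemprimetop}.
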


This is in accordance with the fact that, if $p$ does not divide $|\pi_{0}(Z(\HG))|$ and
$(Z(\HG)^{\circ})^{ P_{F}}$ is connected, then $Z^{1}(W_{F}^{0},\HG)_{\rm tame}$ is connected.

\medskip

\subsection{Some applications to the Fargues-Scholze semisimple correspondence.} \label{sec:sample-potent-appl}
Among the major recent breakthroughs towards constructing the conjectural local Langlands
correspondence for a $p$-adic group $G$,   Fargues and Scholze \cite{FS}
have recently used new geometric tools to attach to any
irreducible representation $\pi$ of $G$, a semisimple local Langlands
parameter $\varphi_{\pi}$. Their construction is compatible with parabolic induction and
local class field theory,
so that, for example, the semisimple parameter attached to an
``unramified  principal series'' is indeed
unramified, as expected. However it is in general very difficult
to say anything on this parameter, especially when $\pi$ is
cuspidal. For example, it is not a priori clear  that
$\varphi_{\pi}$ is tamely ramified whenever $\pi$ has depth $0$. 
Fargues and Scholze's construction actually provides a map ${\rm FS_{\ell}}$
$$\OC(Z^{1}(W_{F}^{0},\HG))^{\HG}_{\o\ZM_{\ell}}  \buildrel\hbox{\tiny $\approx$}\over\longleftarrow
  {\rm \EC xc}(W_{F}^{0},\HG)_{\o\ZM_{\ell}}\To{{\rm FS_{\ell}}}\ZG_{\o\ZM_{\ell}}(G)$$
for each prime $\ell\neq p$. Here, ${\rm \EC xc}(W_{F}^{0},\HG)$ denotes the so-called
``excursion algebra'' over $\o\ZM[\frac 1p]$, and the symbol $\approx$
denotes a universal
homeomorphism to $\OC(Z^{1}(W_{F}^{0},\HG)^{\HG}$
(thus inducing a bijection on geometric points and on sets of connected components).
The $\HG(\o\QM_{\ell})$ conjugacy
class of semisimple parameters $\varphi_{\pi}$ associated to
$\pi\in{\rm Irr}_{\o\QM_{\ell}}(G)$ is then given by the  $\o\QM_{\ell}$-point of
$Z^{1}(W_{F}^{0},\HG)\sslash\HG$ obtained by composing ${\rm FS}_{\ell}$ with
the infinitesimal character $\ZG_{\o\ZM_{\ell}}(G)\To{}{\rm
    End}_{\o\QM_{\ell}}(\pi)=\o\QM_{\ell}$.
In particular, this
construction is compatible with congruences mod $\ell$.
Thanks to the description of the connected
components of $Z^{1}(W_{F}^{0},\HG))_{\o\ZM_{\ell}}$ in \cite[Thm 4.8]{DHKM}, this implies for
example that if $\pi, \pi'$ belong to the same block of $\Rep_{\o\ZM_{\ell}}(G)$, then the
restrictions of $\varphi_{\pi}$ and $\varphi_{\pi'}$ to the prime-to-$\ell$ inertia subgroup $I_{F}^{\ell}$ coincide.

In order to control $\varphi_{\pi}$ for a depth $0$ irreducible representation $\pi$, one
would like to use congruences modulo different primes, as encapsulated in our main results
above. This supposes to have some form of ``independence of $\ell$'' for the
Fargues-Scholze semisimple correspondence. Recently, Scholze \cite{Scholze_motiv} has
announced a solution to this problem that uses a version of his work with Fargues with
motivic coefficients. For our purposes, the upshot is that the family of maps ${\rm
      FS}_{\ell}$ for $\ell\neq p$ is induced by base change from a map $\rm FS_{mot}$ as follows
$$\OC(Z^{1}(W_{F}^{0},\HG))^{\HG}  \buildrel\hbox{\tiny $\approx$}\over\longleftarrow
  {\rm \EC xc}(W_{F}^{0},\HG) \To{{\rm FS_{mot}}}\ZG_{\o\ZM[\frac 1p]}(G).$$
The following is a consequence of Theorem \ref{the_main}.

\begin{Cor} \label{cor_FS}
  Take the existence of $\rm FS_{mot}$ for granted and assume that $\mathbf{G}$ is quasi-split. 
  Then the
  Fargues-Scholze parameter $\varphi_{\pi}$ of a depth $0$ irreducible representation
  $\pi$ is tamely ramified.
\end{Cor}

Let us spell out the argument in the case where $\mathbf{G}$ is  tamely
ramified, so that,  by Theorem \ref{the_tame}, the depth $0$ projector is actually a primitive idempotent in
$\ZG_{\o\ZM[\frac 1p]}(G)$.
Then the spectrum of $\varepsilon_{0}\ZG_{\o\ZM[\frac 1p]}(G)$ is mapped into a
single connected component of  $Z^{1}(W_{F}^{0},\HG)\sslash\HG$ under $\rm FS^{*}_{mot}$. In other words,
$\varphi_{\pi}$ has to be a geometric point of the same connected
component of $Z^{1}(W_{F}^{0},\HG)\sslash\HG$ as any other $\varphi_{\pi'}$ attached to a depth $0$
irreducible representation $\pi'$. Take for $\pi'$ an unramified principal series, for
example the trivial representation. As recalled above, compatibility with parabolic
induction and local class field theory implies that $\varphi_{\pi'}$ is unramified, hence
it is a geometric point of the tame summand $Z^{1}(W_{F}^{0},\HG)_{\rm tame}$, and it follows that the
same is true for $\varphi_{\pi}$.  The general case is similar but requires additional
notation, see \S \ref{pf_main}.

\medskip

Actually, when $\Gf$ is tamely ramified, further considerations based on congruences provide some extra
information about the Fargues-Scholze parameter $\varphi_{\pi}$ of any depth $0$ irreducible
representation $\pi$ over an algebraically closed field $L$ over
$\o\ZM[\frac 1p]$.
Namely, after choosing a generator $\tau$ of the tame
inertia group $I_{F}/P_{F}$, we get a
$\tau$-twisted conjugacy class $\varphi_{\pi}(\tau)$ in $\HG(L)$, which corresponds
to an $L$-point of the finite scheme $(\HG\rtimes\tau\sslash\HG)^{\Fr=(.)^{q}}$. On
the other hand, using type theory and Deligne-Lusztig series,  we can
directly associate to $\pi$ an $L$-point $s_{\pi}$ of the same finite
scheme 
(see \ref{sec:deligne-luszt-param}, note that this Deligne-Lusztig point also depends on $\tau$, now seen as
a choice of trivialization of roots of unity in $\o k_{F}^{\times}$).
It is then  expected that $\varphi_{\pi}(\tau) = s_{\pi}$. Our
considerations here provide the following modest contribution to this
question:

\begin{The}\label{Thm_order_param}
  Assume that $\Gf$ is quasi-split and tamely ramified, and that
  $k_{F}\neq \FM_{2}$. Then, with the above notation, any prime divisor $\ell$ of the order of
  $\varphi_{\pi}(\tau)$ divides the order of $s_{\pi}$. In particular,
  if $\pi$ is unipotent (i.e. $s_{\pi}=1$), then $\varphi_{\pi}$ is unramified.
\end{The}

Here is a sketch of the argument. Using parabolic induction and its compatibility with
the  map $\rm FS_{\rm mot}$, we may argue by induction on the rank of $\Gf$ and
focus on the case where $\pi$ is supercuspidal. A bit of
representation theory then allows us to reduce to the case $L=\o\QM$ and
$\pi$ having an admissible $\o\ZM[\frac 1p]$-model. Then we use
induction on the number of primes dividing the order
of $s_{\pi}$, and the crucial case is when this number is $1$, i.e.
when $\pi$ is unipotent.
In this case, suppose we can find two maximal ideals $\LC_{1}$ and
$\LC_{2}$ in $\o\ZM[\frac 1p]$ with respective residue characteristic
$\ell_{1}\neq \ell_{2}$, such that the block that contains
$\pi\, [\rm mod\,\LC_{i}]$ also contains a non-cuspidal unipotent
representation. Then, using our induction hypothesis and the
description of connected components of
$Z^{1}(W_{F}^{0},\HG)_{\o\ZM_{\ell}}$ in \cite[Thm 4.8]{DHKM}, we see
that both restrictions of $\varphi_{\pi}$ to $I_{F}^{\ell_{i}}$,
$i=1,2$ are trivial, and it follows that $\varphi_{\pi}$ is unramified.
Now, the problem of finding two such primes can be reduced to the
analogous problem for unipotent cuspidal representations of a finite
group of Lie type, where we check existence whenever the base field is
not $\FM_{2}$. The detailed argument is given in Section \ref{sec:proof-theor-refthm}.

\medskip

The two above results are consequences of a few formal properties of
the Fargues-Scholze construction, and say nothing on the non-triviality of the map
$\pi\mapsto \varphi_{\pi}$. In order to say anything on this matter, one obviously needs
to really work (hard!)  on the construction itself. Recently, Tony Feng proved a remarkable
property along these lines, that we extract from Theorem 10.4.1 of \cite{Feng} and the
subsequent remarks therein.
Assume that $\Gf(F)$ contains an  element of prime order $\ell$ whose
centralizer  is an unramified maximal  torus $\Tf$ of $\Gf$.
Let $L$ have characteristic $\ell$ and $s$ be an $L$-point of $(\HG\sslash\HG)^{\Fr=(.)^{q}}$
in the image of the map $\HT^{\Fr=(.)^{q}}\To{}(\HG\sslash\HG)^{\Fr=(.)^{q}}$.
Then there is $\pi\in\Irr_{L}(G)$ such
that $\varphi_{\pi}(\tau)=s=s_{\pi}$.  Moreover, if $s$ is strongly regular, then any
$\pi$ with $s_{\pi}=s$ satisfies $\varphi_{\pi}(\tau)=s_{\pi}$.  The techniques used by
Feng are bound to positive characteristic coefficients, but the last theorem above allows
to lift to characteristic $0$ under favorable circumstances.

\begin{Cor} \label{cor_Feng}
  With Feng's hypothesis above on $\Gf$ and $\Tf$, let $s$ be a $\o\QM$-point of $(\HG\sslash\HG)^{\Fr=(.)^{q}}$
  in the image of the map
  $\HT^{\Fr=(.)^{q}}\To{}(\HG\sslash\HG)^{\Fr=(.)^{q}}$
  and of order prime to $\ell$.
  Then there is $\pi\in\Irr_{\o\QM}(G)$ such
  that $\varphi_{\pi}(\tau)=s=s_{\pi}$.  Moreover, if $s$ is  strongly regular, then any
  $\pi$ with $s_{\pi}=s$ satisfies $\varphi_{\pi}(\tau)=s_{\pi}$.
\end{Cor}




\section{Finite groups}
\label{secFiniteGroups}

Let $\Gf$ be a reductive group over
$\o\FM_{p}$ and $\fr$ the Frobenius map associated to a $\FM_{q}$-rational structure on
$\Gf$, where $q=p^{r}$ for some $r\geq 1$.
The goal of this section is to prove the following theorem.
\begin{The} \label{primitive}
  The category $\Rep_{\o\ZM[\frac 1p]}(\Gf^{\fr})$ is indecomposable. Equivalently, the central idempotent $1$ in
  $\o\ZM[\frac 1p]\Gf^{\fr}$ is primitive.
\end{The}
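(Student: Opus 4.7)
The plan is to translate the indecomposability of $\Rep_{\o\ZM[\frac 1p]}(\Gf^{\fr})$ into a purely combinatorial statement about $\ell$-linkage of complex irreducible characters of $\Gf^{\fr}$, and then to exploit Lusztig's rational series together with the Broué--Michel description of $\ell$-blocks of finite reductive groups.

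First, I would reformulate the problem. Since $\o\ZM[\frac{1}{p}]=\bigcap_{\ell\neq p}\o\ZM_{\ell}$ inside $\o\QM$, and this intersection is preserved after tensoring with the finite group algebra of $\Gf^{\fr}$, a central idempotent of $\o\ZM[\frac{1}{p}]\Gf^{\fr}$ is the same as a central idempotent of $\o\QM\Gf^{\fr}$ whose image in $\o\ZM_{\ell}\Gf^{\fr}$ is integral (equivalently, a sum of $\ell$-block idempotents) for every $\ell\neq p$. Fixing embeddings $\o\QM\hookrightarrow\o\QM_{\ell}$, these idempotents correspond to subsets $S\subseteq\Irr_{\o\QM}(\Gf^{\fr})$ that are a union of $\ell$-blocks for \emph{every} $\ell\neq p$. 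The theorem therefore reduces to showing that the equivalence relation $\sim$ on $\Irr_{\o\QM}(\Gf^{\fr})$ generated by ``$\pi$ and $\pi'$ lie in a common $\ell$-block'' (for some $\ell\neq p$) has a single class.

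Second, I would reduce to the case of unipotent characters. By the Broué--Michel theorem (refined by Bonnafé--Rouquier's Jordan decomposition of $\ell$-blocks), each $\ell$-block of $\Gf^{\fr}$ which meets the rational Lusztig series $\dl{\Gf^{\fr}}{(s)}$ also meets $\dl{\Gf^{\fr}}{(s_{\ell'})}$, where $s_{\ell'}$ denotes the $\ell'$-part of $s$. In particular, whenever $\ell$ divides $|s|$, any $\pi\in\dl{\Gf^{\fr}}{(s)}$ is $\sim$-equivalent to some $\pi'\in\dl{\Gf^{\fr}}{(s_{\ell'})}$. A strong induction on $|s|$, which is automatically coprime to $p$, then connects every irreducible character to a unipotent character in $\dl{\Gf^{\fr}}{(1)}$.

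The third and last step --- to show that all unipotent characters are $\sim$-equivalent --- is the main obstacle. I would attack it via the Broué--Malle--Michel $d$-Harish-Chandra theory: for $\ell\neq p$ good for $\Gf$ and not dividing $|W|$, the unipotent $\ell$-blocks are parametrised by $\Gf^{\fr}$-conjugacy classes of $d$-cuspidal pairs $(\Lf,\lambda)$, with $d=d_{\ell}(q)$ the order of $q$ modulo $\ell$. Letting $d$ range over integers for which $\Phi_{d}(q)$ admits such a prime $\ell$, I would argue by induction on the semisimple rank of $\Gf$ that the finest common coarsening of the resulting partitions of unipotent characters is trivial. A useful flexibility here is that $\sim$ allows non-unipotent intermediates: a cuspidal unipotent character of $\Gf^{\fr}$ lies in an $\ell$-block containing characters in $\dl{\Gf^{\fr}}{(t)}$ for non-trivial $\ell$-elements $t$, and Step~2 applied to a different prime can then re-link these to further unipotent characters. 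I expect the real difficulties to concentrate in exceptional groups with cuspidal unipotent characters and in small-$q$ regimes where $\Phi_{d}(q)$ fails to supply a prime avoiding $|W|$; to handle these, case-by-case verification at the base of the induction, together with passage to a $z$-extension to make $Z(\Gf)$ connected (so that the Broué--Michel parametrisation becomes exact), should suffice.
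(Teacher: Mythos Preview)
Your plan matches the paper's proof almost exactly: the reformulation via $\sim$-equivalence (Proposition~\ref{proprimitif}), the reduction to unipotent characters via Brou\'e--Michel/Hiss and induction on $|s|$ (Proposition~\ref{prounipotent}), and the use of $d$-Harish-Chandra theory for the unipotent case are all present. The execution of the third step differs in detail. The paper reduces to the \emph{adjoint} quotient rather than to a $z$-extension (Proposition~\ref{proreduceadjoint}), hence to simple groups, and then proceeds not by induction on rank but by a direct case-by-case analysis of simple types. For $q$ odd and classical type there is a shortcut you do not mention: all unipotents lie in the principal $2$-block (Theorem~\ref{theqodd}). Otherwise the paper exhibits, for each simple type, an explicit finite set $D$ such that $\dl{\Gf^{\fr}}{1}$ is a single $D$-series and each $d\in D$ is realized as the order of $q$ modulo some admissible $\ell$ (Lemma~\ref{lemDseries}, using Zsygmondy/Birkhoff--Vandiver for existence of such $\ell$); a handful of residual characters in $\Ff_4(2)$ and $\Gg_2$ are handled via published $2$- and $3$-block tables. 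Your suggested detour through non-unipotent intermediates turns out to be unnecessary: the paper works entirely within the restricted relation $\simu$ on $\dl{\Gf^{\fr}}{1}$.
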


\subsection{General facts  on  blocks of finite groups}
Let us start with an abstract finite group $G$. For any commutative ring $\Lambda$, denote
by $\Rep_{\Lambda}G$ the abelian category of $\Lambda G$-modules. Recall
that a block of $\Lambda G$ is an indecomposable direct summand of the category $\Rep_{\Lambda}G$. These
blocks correspond bijectively to indecomposable two-sided ideals of the ring $\Lambda G$ which are direct factors
and to primitive ideals in the center $Z(\Lambda G)$ of the ring $\Lambda G$, which is
also the center of the category $\Rep_{\Lambda}G$.
Note that the center $Z(\Lambda G)$ is the submodule of $G$-invariant elements in $\Lambda G$, so
that any base change map  $\Lambda'\otimes_{\Lambda}Z(\Lambda G)\To{} Z(\Lambda' G)$ is an isomorphism.

For a prime $\ell$, the center $Z(\o\ZM_{\ell} G)$ is finite over the Henselian local
ring $\o\ZM_{\ell}$, hence the reduction map
$Z(\o\ZM_{\ell} G)\To{}\o\FM_{\ell}\otimes_{\o\ZM_{\ell}}Z(\o\ZM_{\ell} G)=Z(\o\FM_{\ell}
G)$ induces a bijection on primitive idempotents, 
whence a
bijection between  blocks of $\o\FM_{\ell}G$ and of
$\o\ZM_{\ell}G$. The decomposition of $\Rep_{\o\ZM_{\ell}}G$ as a direct sum of blocks
induces a partition of the set $\Irr_{\o\Ql}(G)$ of isomorphism classes of simple
$\o\Ql G$-modules. The action of  the group of automorphisms of the field $\o\Ql$
on the group algebra $\o\Ql G$ preserves the integral algebra $\o\Zl G$, hence its action
on the set $\Irr_{\o\Ql}(G)$ preserves the above partition (permuting the parts).
It follows that this partition  can be transported
unambiguously to a partition of the set $\Irr(G)$ of irreducible complex representations
of $G$. Each factor set occurring in this partition will be called an $\ell$-block of
$\Irr(G)$.

Here is another point of view on $\ell$-blocks of $\Irr(G)$. In $\CM G$ we have the
decomposition $1=\sum_{\pi\in\Irr(G)}e_{\pi}$ of $1$, where
$e_{\pi}=\frac{\dim\pi}{|G|}\sum_{g\in G}\tr(\pi(g))g$ are the primitive central
idempotents of $\CM G$. As the formula shows, each $e_{\pi}$ belongs to $\o\ZM[\frac
    1{|G|}]G$ so that this decomposition of $1$ actually holds in $\o\ZM[\frac
    1{|G|}]G$. Denote by $|G|_{\ell'}$ the prime-to-$\ell$ factor of $|G|$, and declare that a
subset $I\subset \Irr(G)$ is $\ell$-integral if $\sum_{\pi\in I}e_{\pi}\in \o\ZM[\frac
  1{|G|_{\ell'}}]G$. Clearly,  $\ell$-integral subsets of $\Irr(G)$ are stable under taking unions,
intersections and complementary subsets.

\begin{Lem}
  The $\ell$-blocks of $\Irr(G)$ are the minimal non-empty $\ell$-integral subsets.
\end{Lem}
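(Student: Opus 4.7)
The plan is to use the central idempotents $e_B := \sum_{\pi \in B} e_\pi$ attached to $\ell$-blocks $B$ as the link between the ring-theoretic and character-theoretic descriptions. Concretely, once the set of $\ell$-blocks is known to coincide with the set of supports of primitive central idempotents of $\oZl G$, the lemma will follow from primitivity plus a simple integrality check.

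First I would show that every $\ell$-block $B$ of $\Irr(G)$ is $\ell$-integral. The idempotent $e_B$ is, by the very construction of $\ell$-blocks via Hensel's lemma, a primitive central idempotent of $\oZl G$; in particular $e_B \in \oZl G$. On the other hand $e_B \in \oZ[\frac{1}{|G|}] G$ since each $e_\pi$ has this property. Viewing both rings inside $\oQl G$ via a fixed embedding $\oQ \hookrightarrow \oQl$, their intersection is $(\oZl \cap \oZ[\frac{1}{|G|}]) G$. An algebraic number lies in this intersection iff it is integral at every prime above $\ell$ and at every prime not dividing $|G|$, i.e.\ iff its denominators involve only primes dividing $|G|^{\ell'}$. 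Thus $\oZl \cap \oZ[\frac{1}{|G|}] = \oZ[\frac{1}{|G|^{\ell'}}]$, so $e_B \in \oZ[\frac{1}{|G|^{\ell'}}] G$ and $B$ is $\ell$-integral.

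Next I would establish minimality. Let $I \subset B$ be a non-empty $\ell$-integral subset of an $\ell$-block $B$. Then $e_I = \sum_{\pi \in I} e_\pi$ is a central idempotent of $\oZl G$, and since $I \subset B$ one computes $e_I \cdot e_B = e_I$. Because $e_B$ is a primitive idempotent of $Z(\oZl G)$, this forces $e_I \in \{0, e_B\}$. As $I$ is non-empty, $e_I \neq 0$, hence $e_I = e_B$ and $I = B$. So $\ell$-blocks are minimal among non-empty $\ell$-integral subsets.

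Conversely, given any minimal non-empty $\ell$-integral subset $I$, I would decompose $e_I$ in $\oZl G$ as a sum of primitive central idempotents. This corresponds to a partition $I = \bigsqcup_j B_j$ into $\ell$-blocks; each $B_j$ is itself $\ell$-integral by the first step, and each is non-empty, so by minimality of $I$ one must have a single term $I = B_j$. This identifies $I$ as an $\ell$-block and finishes the proof. No step looks like a real obstacle; the only subtlety is the identification $\oZl \cap \oZ[\frac{1}{|G|}] = \oZ[\frac{1}{|G|^{\ell'}}]$, which I would spell out carefully so that the definition of $\ell$-integrality matches the ring in which $e_B$ naturally lives.
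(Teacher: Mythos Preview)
Your proposal is correct and follows essentially the same approach as the paper: both arguments rest on the identity $e_{B}=\sum_{\pi\in B}e_{\pi}$ for the primitive central idempotent in $Z(\o\ZM_{\ell}G)$, together with the observation that $\o\ZM_{\ell}\cap\o\ZM[\tfrac{1}{|G|}]=\o\ZM[\tfrac{1}{|G|^{\ell'}}]$. The paper's proof is simply more terse, invoking invariance under field automorphisms to transport the statement to $\Irr_{\o\QM_{\ell}}(G)$ and then citing this identity without further elaboration; your version unpacks the same ideas explicitly.
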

\begin{proof}
  Since both the property of being an $\ell$-block and of being $\ell$-integral are
  invariant under field automorphisms, we may transport the statement to $\Irr_{\o\Ql}(G)$
  where it follows from the fact that for any block $B$ of $\Rep_{\o\ZM_{\ell}}G$, the
  corresponding primitive central idempotent $e_{B}$ in $Z(\o\ZM_{\ell}G)$ is given by
  $e_{B}=\sum_{\pi\in \Irr_{\o\QM_{\ell}}(G)\cap B}e_{\pi}$. 
\end{proof}

Let us denote by $\sim_{\ell}$ the equivalence relation on $\Irr(G)$ whose equivalence
classes are the $\ell$-blocks. Now, fix a prime $p$, and denote by $\sim$ the equivalence
relation generated by all $\sim_{\ell}$ for $\ell\neq p$. Explicitly, we thus have
$\pi \sim \pi'$ if and only if there exist $\ell_1$,...,$\ell_r$ a sequence of primes
different from $p$ and $\pi_1,\cdots,\pi_{r-1} \in \Irr(G)$ such that
$$\pi \sim_{\ell_1} \pi_1 \sim_{\ell_2} \pi_2 \sim_{\ell_3} \cdots \sim_{\ell_r} \pi'.$$

\begin{Pro}	\label{proprimitif}
  The $\sim$-equivalence classes are the minimal non-empty subsets $I\subset \Irr(G)$ such
  that $e_{I}:=\sum_{\pi\in I}e_{\pi}\in \o\ZM[\frac 1p]G$. Moreover, the map $I\mapsto
    e_{I}$ is a bijection from $\Irr(G)/\sim$ onto the set of primitive idempotents of $\o\ZM[\frac 1p]G$.
\end{Pro}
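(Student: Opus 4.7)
The plan is to reduce the statement to the $\ell$-local description of good subsets proved in the previous lemma and then intersect over all $\ell\neq p$. The key algebraic observation is that inside $\o\ZM[\frac{1}{|G|}]$ one has
$$\o\ZM[\tfrac1p] = \bigcap_{\ell\neq p}\o\ZM[\tfrac{1}{|G|^{\ell'}}],$$
because a prime $q$ dividing $|G|$ is inverted in $\o\ZM[\tfrac{1}{|G|^{\ell'}}]$ precisely when $\ell\neq q$, so it is inverted in the intersection exactly when $q\neq p$; primes not dividing $|G|$ are inverted in none of the rings. Since every $e_{\pi}$ lies in $\o\ZM[\tfrac{1}{|G|}]G$, this formula implies that a subset $I\subset\Irr(G)$ satisfies $e_{I}\in\o\ZM[\tfrac1p]G$ if and only if $I$ is $\ell$-integral for every prime $\ell\neq p$.

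The next step is a mild strengthening of the previous lemma: every $\ell$-integral subset of $\Irr(G)$ is a union of $\ell$-blocks. This is formal. The intersection of two $\ell$-integral subsets is $\ell$-integral, so for an $\ell$-integral $I$ and an $\ell$-block $B$ the subset $B\cap I$ is $\ell$-integral, and by the minimality of $B$ is either empty or equal to $B$. Combining this with the first paragraph, a subset $I\subset\Irr(G)$ satisfies $e_{I}\in\o\ZM[\tfrac1p]G$ iff $I$ is stable under every $\sim_{\ell}$ with $\ell\neq p$, iff $I$ is stable under $\sim$, iff $I$ is a union of $\sim$-classes. The minimal non-empty such $I$ are therefore precisely the $\sim$-equivalence classes, which gives the first half of the proposition.

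For the bijection with primitive central idempotents of $\o\ZM[\tfrac1p]G$, I would argue as follows. For a $\sim$-class $I$ the element $e_{I}$ is a central idempotent of $\o\ZM[\tfrac1p]G$, and any decomposition $e_{I}=e_{1}+e_{2}$ into orthogonal nonzero central idempotents in $\o\ZM[\tfrac1p]G$ would persist in $\CM G$, forcing $e_{j}=e_{I_{j}}$ for a nontrivial partition $I=I_{1}\sqcup I_{2}$ into good subsets, contradicting the minimality of $I$; hence $e_{I}$ is primitive central. Conversely, any primitive central idempotent $e$ of $\o\ZM[\tfrac1p]G$ is central in $\CM G$, hence of the form $e_{J}$ for a unique $J\subset\Irr(G)$, and $J$ must then be a minimal non-empty good subset, i.e.\ a single $\sim$-class. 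No step looks especially delicate; the only mild subtlety is to remember that ``primitive'' in the statement refers to primitivity as a central idempotent, which is harmless here since every $e_{I}$ is automatically central.
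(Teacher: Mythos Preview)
Your proof is correct and follows essentially the same route as the paper's: the paper's proof is a one-liner invoking the previous lemma together with the identity $\o\ZM[\frac 1p]\cap\o\ZM[\frac 1{|G|}]=\bigcap_{\ell\neq p}\o\ZM[\frac 1{|G|^{\ell'}}]$, and you have simply unpacked the details. One small imprecision: your displayed identity should read $\o\ZM[\tfrac1p]\cap\o\ZM[\tfrac1{|G|}]$ on the left (as in the paper), not $\o\ZM[\tfrac1p]$, since $p$ need not divide $|G|$; this is harmless for the argument because every $e_\pi$ already lies in $\o\ZM[\tfrac1{|G|}]G$.
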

\begin{proof}
  Follows from the previous lemma and the equality $\o\ZM[\frac 1p]\cap\o\ZM[\frac 1{|G|}]=\bigcap_{\ell\neq p}
    \o\ZM[\frac 1{|G|_{\ell'}}]$.
\end{proof}

Specializing our discussion to the case $G=\Gf^{\fr}$, the last proposition shows that
proving Theorem \ref{primitive} is equivalent to proving that there is only one
$\sim$-equivalence class in $\Irr(\Gf^{\fr})$.  Before doing so,
we need a recollection of Deligne-Lusztig theory.

\subsection{Blocks of finite reductive groups}
\label{secBlocksFiniteGroups}

Fix a  reductive group $(\Gf^{*},\fr^{*})$ over $\FM_{q}$ that is dual to $(\Gf,\fr)$ in
the sense of Lusztig.  Using their ``twisted'' induction functors, Deligne and Lusztig
define a partition $\Irr(\Gf^{\fr})=\bigsqcup_{s}
  \EC(\Gf^{\fr},s)$ of irreducible representations into ``Deligne-Lusztig series''
associated to semisimple elements $s$ of $\Gfd^{\frd}$ up to $\Gfd^{\frd}$-conjugacy.
Note that this partition depends on certain compatible choices of roots of unity, but
these choices will be irrelevant to our matters.

For a prime $\ell\neq p$ and a semisimple element $s\in \Gfd^{\frd}$ of order prime to
$\ell$,  we put
$$\EC_{\ell}(\Gf^{\fr},s):= \bigcup_{t_{\ell'}=s}\EC(\Gf^{\fr},t),$$ where
$t_{\ell'}$ denotes the prime-to-$\ell$ part of $t$ (i.e., we write
$t=t_{\ell'}t_{\ell}$ in $\langle t\rangle$, with $t_{\ell}$ of order a power of $\ell$ and
$t_{\ell'}$ of order prime to $\ell$). The following fundamental results are due to Broué
and Michel, resp. Hiss, and are stated in \cite[Thm. 9.12]{cabanes_enguehard}.

\begin{enumerate}
  \item $\dll{\Gf^{\fr}}{s}$ is a union of $\ell$-blocks.
  \item For each $\ell$-block $B$ such that $\Irr(\Gf^{\fr},B) \subseteq \dll{\Gf^{\fr}}{s}$, one has $\Irr(\Gf^{\fr},B) \cap \dl{\Gf^{\fr}}{s} \neq 0$.
\end{enumerate}

From these results we easily deduce the following fact.
\begin{Pro} \label{prounipotent}
  Any representation $\pi$ in $\Irr(\Gf^{\fr})$ is $\sim$-equivalent to a representation in $\dl{\Gf^{\fr}}{1}$.
\end{Pro}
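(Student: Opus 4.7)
The plan is to prove this by strong induction on the order of the semisimple element $s \in \mathbf{G}^{*F^*}$ such that $\pi \in \mathcal{E}(\mathbf{G}^F, s)$. Recall that the Deligne-Lusztig partition is indexed by conjugacy classes of semisimple elements of $\mathbf{G}^{*F^*}$, and in a finite reductive group every semisimple element has order prime to $p$. So writing $\pi \in \mathcal{E}(\mathbf{G}^F, t)$, the order $n := |t|$ is coprime to $p$.

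If $n = 1$ there is nothing to prove. Otherwise, pick a prime $\ell$ dividing $n$; since $n$ is prime to $p$, we have $\ell \neq p$. Decompose $t = t_{\ell'} t_{\ell}$ and set $s := t_{\ell'}$. By construction $t \in \mathcal{E}_\ell(\mathbf{G}^F, s)$ in the sense of the paragraph above, so $\pi \in \mathcal{E}(\mathbf{G}^F, t) \subseteq \mathcal{E}_\ell(\mathbf{G}^F, s)$. By fact (1) from Broué--Michel, $\mathcal{E}_\ell(\mathbf{G}^F, s)$ is a union of $\ell$-blocks, so the $\ell$-block $B$ containing $\pi$ is itself contained in $\mathcal{E}_\ell(\mathbf{G}^F, s)$. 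By fact (2) from Hiss, $B \cap \mathcal{E}(\mathbf{G}^F, s)$ is non-empty, so we may pick $\pi' \in B \cap \mathcal{E}(\mathbf{G}^F, s)$. Then $\pi \sim_\ell \pi'$, hence $\pi \sim \pi'$.

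Since $t_\ell \neq 1$, the order of $s = t_{\ell'}$ is strictly smaller than the order of $t$. The inductive hypothesis applied to $\pi'$ produces a representation $\pi'' \in \mathcal{E}(\mathbf{G}^F, 1)$ with $\pi' \sim \pi''$, and by transitivity $\pi \sim \pi''$, completing the induction.

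There is no real obstacle here: the argument is a formal consequence of the two cited facts of Broué--Michel and Hiss together with the fact that semisimple elements in a finite reductive group have $p'$-order (ensuring that the prime $\ell$ we extract is indeed different from $p$, so that $\sim_\ell$ contributes to the relation $\sim$). The non-trivial input of the proposition is entirely packaged in statements (1) and (2), and the combinatorial core is the observation that stripping off the $\ell$-part of $t$ gives an element of smaller order to which induction applies.
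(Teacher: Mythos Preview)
Your proof is correct and follows essentially the same argument as the paper's: both peel off one prime $\ell$ dividing the order of the semisimple parameter, use Hiss's result (2) to pass to $\mathcal{E}(\mathbf{G}^{\mathsf F},s_{\ell'})$, and then induct. The only cosmetic difference is that you induct on the order of the semisimple element while the paper inducts on the number of its prime divisors; both terminate for the same reason. (One small notational slip: ``$t \in \mathcal{E}_\ell(\mathbf{G}^{\mathsf F},s)$'' should read that the series $\mathcal{E}(\mathbf{G}^{\mathsf F},t)$ is contained in $\mathcal{E}_\ell(\mathbf{G}^{\mathsf F},s)$, but you clarify this immediately after.)
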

\begin{proof}
  Let $s$ be a semisimple element in $\Gfd^{\frd}$ such that $\pi\in \EC(\Gf^{\fr},s)$,
  and let $\ell$ be a prime that divides the order of $s$. Note that $\ell\neq p$. As
  above, denote by $s_{\ell'}$ the prime-to-$\ell$ part of $s$. Then (2)
  above tells us that the $\ell$-block containing $\pi$ intersects
  $\EC(\Gf^{\fr},s_{\ell'})$. Therefore, the $\sim$-equivalence class of $\pi$
  intersects $\EC(\Gf^{\fr},s_{\ell'})$ too. But the order of $s_{\ell'}$ is the
  prime-to-$\ell$
  factor of the order of $s$. So, arguing inductively on the number of prime
  divisors of the order of $s$, we conclude that the $\sim$-equivalence class of $\pi$
  intersects $\EC(\Gf^{\fr},1)$.
\end{proof}

Let us now denote by $\simu$ the equivalence relation on $\dl{\Gf^{\fr}}{1}$ defined in
the same way as $\sim$, with every intermediate representation $\pi_i$  taken in
$\dl{\Gf^{\fr}}{1}$. By the last proposition, in order to prove Theorem \ref{primitive}, it
suffices to show that $\dl{\Gf^{\fr}}{1}$ has a unique $\simu$-equivalence class.

\begin{Pro} \label{proreduceadjoint}
  Let $\Gf_{\rm ad}$ be the adjoint group of $\Gf$, denote by $\pi :
    \Gf^{\fr}\To{}\Gf_{\rm ad}^{\fr}$ the natural map and by $\pi^{*}$ the associated
  pullback on representations. Then $\pi^{*}$ induces a
  bijection on unipotent representations $\dl{\Gf_{\rm ad}^{\fr}}{1} \simto
    \dl{\Gf^{\fr}}{1}$ that is compatible with $\simu$-equivalence on both sides.
\end{Pro}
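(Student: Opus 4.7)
The plan is to factor $\pi^*$ through the normal subgroup $H := \pi(\Gf^\fr) = \Gf^\fr/Z(\Gf)^\fr$ of $\Gf_{\rm ad}^\fr$, with finite abelian quotient $A := \Gf_{\rm ad}^\fr/H$. I would proceed in three stages: show every unipotent character of $\Gf^\fr$ descends to $H$; use character theory to establish the bijection $\dl{\Gf_{\rm ad}^\fr}{1}\simto\dl{\Gf^\fr}{1}$; and finally check that this bijection respects $\ell$-blocks for every $\ell\neq p$.

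For the first point, the Deligne-Lusztig character formula expresses $R_\Tf^\Gf(1)(g)$ as an alternating trace on the $\ell$-adic cohomology of the subvariety $X_\Tf\subset\Gf/\Bf$. Since $Z(\Gf)$ acts trivially on the flag variety, $R_\Tf^\Gf(1)(zg)=R_\Tf^\Gf(1)(g)$ for every $z\in Z(\Gf)^\fr$, so every unipotent character of $\Gf^\fr$ is trivial on $Z(\Gf)^\fr$ and descends to $H$. The same identification of Deligne-Lusztig varieties for $(\Gf,\Tf)$ and $(\Gf_{\rm ad},\pi(\Tf))$ yields $R_\Tf^\Gf(1)=\pi^*R_{\pi(\Tf)}^{\Gf_{\rm ad}}(1)$, so $\pi^*$ sends virtual unipotent characters to virtual unipotent characters.

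For bijectivity, identify $\widehat A$ with the linear characters of $\Gf_{\rm ad}^\fr$ trivial on $H$. Under Deligne-Lusztig theory, tensoring by a nontrivial $\chi\in\widehat A$ moves a representation into a Lusztig series attached to a nontrivial central element of the dual simply-connected group $(\Gfd)_{\rm sc}^{\frd}$, so $\sigma\otimes\chi$ is never unipotent for $\sigma\in\dl{\Gf_{\rm ad}^\fr}{1}$ and $\chi\neq 1$. Frobenius reciprocity then gives
\[
\langle\sigma|_H,\sigma|_H\rangle_H = \sum_{\chi\in\widehat A}\langle\sigma\otimes\chi,\sigma\rangle = 1,
\]
so $\pi^*\sigma$ is irreducible. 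Injectivity on $\dl{\Gf_{\rm ad}^\fr}{1}$ follows from Clifford theory and the same observation, while surjectivity follows from Lusztig's classification, which gives $|\dl{\Gf^\fr}{1}|=|\dl{\Gf_{\rm ad}^\fr}{1}|$ since both are canonically parametrized by combinatorial data attached to the Weyl group with its Frobenius action.

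The main obstacle is the compatibility with $\simu$, since $|Z(\Gf)^\fr|$ and $|A|$ can both have $\ell$-torsion and the $\ell$-blocks of $\Gf^\fr$ and $\Gf_{\rm ad}^\fr$ thus do not restrict to one another in a purely formal way. I would bridge this via the Brou\'e--Michel parametrization of unipotent $\ell$-blocks (e.g.\ \cite[Thm.\ 22.9]{cabanes_enguehard}): for each $\ell\neq p$, the unipotent $\ell$-blocks are indexed by conjugacy classes of $e_\ell$-cuspidal pairs $(\Lf,\lambda)$, where $e_\ell$ is the multiplicative order of $q$ modulo $\ell$. This indexing is intrinsic to the root datum with its Frobenius action, hence is preserved by the isogeny $\Gf\to\Gf_{\rm ad}$. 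Since $\pi^*$ commutes with Deligne-Lusztig induction from Levi subgroups (by the same flag-variety identification as above), it preserves $e_\ell$-cuspidal support and therefore matches unipotent $\ell$-blocks on both sides. Generating an equivalence relation over all $\ell\neq p$ yields the compatibility with $\simu$.
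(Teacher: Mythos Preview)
Your argument for the bijection $\dl{\Gf_{\rm ad}^\fr}{1}\simto\dl{\Gf^\fr}{1}$ is correct and essentially what the paper has in mind, though the paper is content to declare this ``clear from the very definition of these representations''.

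The gap is in your third step. The parametrization of unipotent $\ell$-blocks by $\Gf^\fr$-conjugacy classes of $e_\ell$-cuspidal pairs --- the result you invoke, which appears in the paper as Theorem~\ref{thelblock} --- carries hypotheses: $\ell$ must be odd, good for $\Gf$, and $\ell\neq 3$ if ${}^3\Dd_4$ is involved. For bad primes these hypotheses fail and the $d$-Harish-Chandra description of unipotent $\ell$-blocks breaks down; this is exactly why the later case-by-case analysis in Theorems~\ref{theqodd} and~\ref{theExeptionel} has to handle $\ell=2,3$ by ad hoc arguments. Since $\simu$ is generated by \emph{all} relations $\sim_\ell$ with $\ell\neq p$, your argument leaves the bad primes uncovered, and these cannot be dismissed: for instance if $q$ is odd and $\Gf$ is classical, it is precisely $\ell=2$ that does all the work in Theorem~\ref{theqodd}.

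The paper avoids this by citing a different result, \cite[Thm.~17.1]{cabanes_enguehard}, valid for \emph{every} prime $\ell\neq p$ with no goodness hypothesis: under the isogeny $\Gf\to\Gf_{\rm ad}$, the groups $\Gf^\fr$ and $\Gf_{\rm ad}^\fr$ have the same number of unipotent $\ell$-blocks, and $\pi^*:\ZM\dll{\Gf_{\rm ad}^\fr}{1}\to\ZM\dll{\Gf^\fr}{1}$ respects the orthogonal block decompositions. This is a statement about the behaviour of blocks under isogeny rather than a classification of the blocks themselves, and it is precisely what is needed here.
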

\begin{proof}
  The fact that $\pi^{*}$ induces a
  bijection on unipotent representations
  is clear from the very definition of these representations.
  Moreover, for a prime $\ell$ different from $p$, \cite[Thm. 17.1]{cabanes_enguehard} tells us that
  $\Gf^{\fr}$ and ${\Gf_{\rm ad}}^{\fr}$ have
  the same number of unipotent $\ell$-blocks and that $\pi^{*} : \mathbb{Z}\dll{{\Gf_{\rm
            ad}}^{\fr}}{1}\To{} \mathbb{Z}\dll{\Gf^{\fr}}{1}$ preserves
  the orthogonal decomposition induced by $\ell$-blocks. It follows that the bijection
  $\pi^{*}:\dl{\Gf_{\rm ad}^{\fr}}{1} \simto
    \dl{\Gf^{\fr}}{1}$ on unipotent representations is compatible with the respective
  $\ell$-block partitions. Since the  $\sim^{1}$-equivalence classes are the subsets which
  are stable under $\sim_{\ell}$-equivalence for $\ell\neq p$ and minimal for this
  property,
  $\pi^{*}$ is also compatible with the partition into
  $\sim^{1}$-equivalence classes.
\end{proof}

This proposition allows us to reduce the general case to
the case where $\Gf$ is of adjoint type. But a group of adjoint type is a direct product
of restriction of scalars of simple groups \cite[Prop. 6.4.4 and Rk. 6.4.5]{conrad_luminy}.
So, in the sequel we may restrict attention to simple groups. It turns out that in some
cases, there is a quick argument using $2$-block theory.

\begin{The} \label{theqodd}
  Suppose $q$ is odd and  $\Gf$ has type $\Aa_n$,  ${}^{2}\Aa_n$,  $\Bb_n$, $\Cc_n$, $\Dd_n$
  or ${}^{2}\Dd_n$. Then $\dl{\Gf^{\fr}}{1}$ is composed of only one $\simu$-equivalence class.
\end{The}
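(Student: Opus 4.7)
The plan is to use $\ell=2$: since $q$ is odd, $2\ne p$, so this prime is available for the $\sim_{2}$ relation on $\Irr(\Gf^{\fr})$. The proof reduces to the following key claim: \emph{for each of the listed classical types with $q$ odd, all of $\dl{\Gf^{\fr}}{1}$ lies in a single $2$-block of $\Gf^{\fr}$.} Granting this, any two unipotent characters $\pi,\pi'\in\dl{\Gf^{\fr}}{1}$ automatically satisfy $\pi\sim_{2}\pi'$ (same $2$-block), so the trivial chain---with no intermediate term needed---already lies in $\dl{\Gf^{\fr}}{1}$, giving $\pi\simu\pi'$.

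For the key claim I would invoke the well-developed $2$-block theory of classical groups with $q$ odd. For types $\Aa_n$ and ${}^{2}\Aa_n$, the Fong--Srinivasan classification shows that unipotent $\ell$-blocks of $\GL_n(q)$ (resp.\ $\U_n(q)$) are parametrized by $e$-cores of partitions of $n$, where $e$ is the multiplicative order of $q$ (resp.\ $-q$) modulo $\ell$. For $\ell=2$ and $q$ odd, both $q$ and $-q$ are $\equiv 1\pmod 2$, so $e=1$; since every partition reduces to the empty $1$-core by removal of $1$-hooks, there is a single unipotent $2$-block as desired. For types $\Bb_n$, $\Cc_n$, $\Dd_n$ and ${}^{2}\Dd_n$, the analogous statement---that all unipotent characters belong to the principal $2$-block when $q$ is odd---is a theorem of Cabanes--Enguehard (building on earlier work of Broué, Michel, Fong and Srinivasan), which can be found in \cite{cabanes_enguehard}.

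The main obstacle is essentially book-keeping: locating and piecing together the precise block-theoretic statements for each type, as $\ell=2$ is a ``bad'' prime for types $\Bb,\Cc,\Dd$ and required significant work beyond the original Fong--Srinivasan arguments. One should also verify that small-rank ``accidental'' isomorphisms (such as $\Dd_2\simeq \Aa_1\times\Aa_1$ or $\Bb_2\simeq\Cc_2$) do not cause the cited results to degenerate. Once the key claim is granted for each type, however, the conclusion is an immediate consequence of the general framework set up in Propositions~\ref{prounipotent} and~\ref{proreduceadjoint}, and explains why the argument ``breaks'' at $q$ even or at exceptional types, where a separate analysis---presumably using other primes---would be needed.
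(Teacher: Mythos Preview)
Your approach is correct and essentially identical to the paper's: the paper simply invokes \cite[Thm.~21.14]{cabanes_enguehard} to assert that $\dl{\Gf^{\fr}}{1}$ lies in the principal $2$-block for all the listed types at once, and concludes immediately that all unipotent representations are $\sim_{2}$-equivalent. Your separate treatment of type $\Aa$ via $e$-cores is more explicit but unnecessary, since the cited theorem already covers that case; note also that the Fong--Srinivasan $e$-core parametrization is typically stated for odd $\ell$, so it is cleaner to rely directly on the Cabanes--Enguehard reference for $\ell=2$ as the paper does.
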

\begin{proof}
  Indeed, by \cite[Thm. 21.14]{cabanes_enguehard}, $\dl{\Gf^{\fr}}{1}$ is included in the
  principal 2-block. So all the unipotent representations are already $\sim_2$-equivalent.
\end{proof}

In order to deal with $q$ even and the exceptional groups, we need to recall more results about blocks
that contain a unipotent representation.

\subsection{\texorpdfstring{$d$}{d}-series}
The unipotent $\ell$-blocks can be obtained using $d$-Harish-Chandra
theory, which provides a partition of $\EC(\Gf^{\fr},1)$ into \emph{$d$-series}, and where
$d\geq 1$ is an integer.  When $d=1$, the $1$-series are the usual Harish-Chandra
series constructed via parabolic induction.
In general, they are defined through an analogous pattern, relying on Deligne-Lusztig
induction and the following definitions.
\begin{itemize}
  \item An $\fr$-stable Levi subgroup of $\Gf$ is called a \emph{$d$-split Levi subgroup} if it is the
        centralizer of a $\Phi_{d}$-torus, i.e. an $\fr$-stable torus $\Sf$ such that
        $|\Sf^{\fr^{n}}|=\Phi_{d}(q^{n})$ for a certain $a \geq 1$ and all $n>0$ with
        $n\equiv 1 \pmod{a}$. Here, $\Phi_{d}$ denotes the $d$-cyclotomic polynomial.
  \item An irreducible representation
        $\pi\in\Irr(\Gf^{\fr})$ is called \emph{$d$-cuspidal} if for
        all proper $d$-split Levi subgroups $\Lf$
        and all parabolic subgroups $\Pf$ with Levi $\Lf$, the
        Deligne-Lusztig twisted restriction $^{*}R_{\Lf\subset \Pf}^{\Gf}[\pi]$ vanishes.
  \item A \emph{$d$-cuspidal pair} for $\Gf$ is a pair $(\Lf,\sigma)$ with $\Lf$ a $d$-split Levi subgroup
        of $\Gf$ and  $\sigma \in \Irr(\Lf^{\fr})$ $d$-cuspidal.
  \item The \emph{$d$-cuspidal support} of $\pi\in\Irr(\Gf^{\fr})$  is the set of all $d$-cuspidal
        pairs $(\Lf,\sigma)$ such that $\pi$ appears with non-zero multiplicity in
        the virtual character $R_{\Lf\subset \Pf}^{\Gf}[\sigma]$.
\end{itemize}

According to \cite[Thm 3.2]{bmm}, the $d$-cuspidal support of  any unipotent
$\pi\in\EC(\Gf^{\fr},1)$ consists of a single $\Gf^{\fr}$-conjugacy class of $d$-cuspidal
pairs $(\Lf,\sigma)$.  We thus get a partition of $\EC(\Gf^{\fr},1)$ labeled by conjugacy
classes of unipotent $d$-cuspidal pairs. The summands appearing in this partition are called \emph{$d$-series}.
The remarkable relevance of $d$-series to the study of $\ell$-blocks is summarized in the
following statement.

\begin{The}[{\cite[Thm. 4.4]{CabanesEngueharduni}}]
  \label{thelblock}
  Let $\ell$ be a prime not dividing $q$ and let $d$ be the order of $q$ in $\FM_{\ell}^{\times}$.
  We assume that $\ell$ is odd, good for $\Gf$, and $\ell
    \neq 3$ if ${}^3\Dd_4$ is involved in $(\Gf,\fr)$.
  Then, the map $B\mapsto B\cap \EC(\Gf^{\fr},1)$ induces a bijection
  \begin{center}
    $\{\ell$-blocks $B\subset \EC_{\ell}(\Gf^{\fr},1)\}$ $\simto$ $\{d$-series in $\EC(\Gf^{\fr},1)\}$.
  \end{center}

\end{The}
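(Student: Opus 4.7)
The plan is to follow the Broué--Malle--Michel program on unipotent $\ell$-blocks. By \cite[Thm 3.2]{bmm} recalled in the preceding subsection, each $\pi\in\EC(\Gf^{\fr},1)$ has a unique $\Gf^{\fr}$-conjugacy class of $d$-cuspidal pairs $(\Lf,\sigma)$ in its $d$-cuspidal support, so the set of $d$-series of unipotent characters is in natural bijection with the set of $\Gf^{\fr}$-conjugacy classes of unipotent $d$-cuspidal pairs. It therefore suffices to prove: (a) any two unipotent characters with the same $d$-cuspidal pair lie in a common $\ell$-block of $\Gf^{\fr}$; and (b) unipotent characters with non-conjugate $d$-cuspidal pairs lie in distinct $\ell$-blocks. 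Combined with the Broué--Michel facts that $\EC_{\ell}(\Gf^{\fr},1)$ is a union of $\ell$-blocks and that each such block meets $\EC(\Gf^{\fr},1)$ non-trivially (as already used above in Proposition~\ref{prounipotent}), these two statements yield the asserted bijection $B\mapsto B\cap\EC(\Gf^{\fr},1)$.

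For (a), the tool is Deligne--Lusztig induction $R_{\Lf\subset\Pf}^{\Gf}$ from a $d$-split Levi $\Lf$. When $\ell\mid\Phi_{d}(q)$, a Sylow $\Phi_{d}$-torus $\Sf$ of $\Gf$ plays the role of a Sylow $\ell$-torus; its centralizer $\Lf$ is a $d$-split Levi whose $\ell$-local structure mirrors the $\ell$-block structure of $\Gf^{\fr}$. Concretely, for a unipotent $d$-cuspidal $\sigma\in\Irr(\Lf^{\fr})$, the goal is to prove that all unipotent constituents of $R_{\Lf\subset\Pf}^{\Gf}[\sigma]$ belong to a single $\ell$-block. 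The cleanest approach is to establish a $\Phi_{d}$-analogue of the Brauer correspondence that identifies the block idempotent governing $R_{\Lf}^{\Gf}[\sigma]$ with one coming from the $\ell$-block of $\sigma$ in $\Lf^{\fr}$. A complementary integrality argument can then be used: the virtual characters $R_{\Lf}^{\Gf}[\sigma']-R_{\Lf}^{\Gf}[\sigma]$, as $\sigma'$ ranges over characters of $\Lf^{\fr}$ congruent to $\sigma$ modulo $\ell$, provide the projective liftings needed to link together the unipotent constituents inside one block.

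For (b), one recovers the pair $(\Lf,\sigma)$ from the $\ell$-block: the central character modulo $\ell$, restricted to the image of the Sylow $\Phi_{d}$-torus inside $\Lf^{\fr}$, pins down $\Lf$ up to $\Gf^{\fr}$-conjugacy, and an induction on $\dim(\Gf)$ applied inside the smaller group $\Lf^{\fr}$ recovers $\sigma$ from its own $\ell$-block in $\Lf^{\fr}$. The main obstacle will be step (a): proving that all unipotent constituents of $R_{\Lf}^{\Gf}[\sigma]$ land in a common block is delicate because Deligne--Lusztig induction is only an alternating sum of $\ell$-adic cohomology groups, so producing the required projective lifts requires fine control over the centralizers of $\ell$-subgroups of $\Sf^{\fr}$. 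It is precisely here that the hypotheses $\ell$ odd, $\ell$ good for $\Gf$, and $\ell\neq 3$ for ${}^{3}\Dd_{4}$ become indispensable: they guarantee that these centralizers remain connected reductive and that Sylow $\ell$-subgroups of $\Gf^{\fr}$ are abelian in the way dictated by the $\Phi_{d}$-torus combinatorics, without which the block-theoretic arguments above collapse into a case-by-case analysis of small-rank exceptions.
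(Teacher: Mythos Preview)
The paper does not prove this statement at all: Theorem~\ref{thelblock} is quoted verbatim from \cite[Thm.~4.4]{CabanesEngueharduni} and used as a black box, with no argument supplied. So there is nothing in the paper to compare your proposal against.

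That said, your outline is broadly in the spirit of the Cabanes--Enguehard argument (and the earlier work of Brou\'e--Malle--Michel that it builds on), but as written it is a narrative of what one hopes to do rather than a proof. Step~(a) in particular is where the actual work lies, and phrases like ``the cleanest approach is to establish a $\Phi_{d}$-analogue of the Brauer correspondence'' or ``a complementary integrality argument can then be used'' name the desired conclusions without carrying them out. The genuine content of \cite{CabanesEngueharduni} is precisely the construction of this generalized Brauer correspondence and the verification that the defect groups and local subpairs behave as predicted by the $d$-Harish-Chandra picture; your sketch gestures at these ingredients but does not supply them. If you wish to include a proof here rather than a citation, you would need to reproduce substantial portions of that paper, which is neither necessary nor appropriate for the present work.
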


This theorem suggests the following strategy to prove that $\EC(\Gf^{\fr},1)$ has only one
$\sim^{1}$-equivalence class.  If $D\subset \NM^{*}$ is a finite set of non-zero integers,
define a \emph{$D$-series} as a subset of $\EC(\Gf^{\fr},1)$ that is a union of $d$-series for
each $d\in D$, and which is \emph{minimal} for this property.
\begin{Lem} \label{lemDseries} Suppose there exists $D$ such that
  \begin{enumerate}
    \item $\EC(\Gf^{\fr},1)$ is a $D$-series.
    \item Each $d\in D$ is the order of $q$ modulo some $\ell$ as in  Theorem
          \ref{thelblock}.
  \end{enumerate}
  Then $\EC(\Gf^{\fr},1)$ consists of  a unique $\simu$-equivalence class.
\end{Lem}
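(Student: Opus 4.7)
The plan is to realise the partition of $\EC(\Gf^{\fr},1)$ into $d$-series, for each $d\in D$, as the restriction to unipotent characters of a partition by $\ell$-blocks for a suitable auxiliary prime $\ell_{d}\neq p$, and then to combine all these relations. Once Theorem \ref{thelblock} is invoked, the conclusion should follow formally from the definitions of $D$-series and of $\simu$.

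First, I would use hypothesis (2) to fix, for each $d\in D$, a prime $\ell_{d}\neq p$ which is odd, good for $\Gf$, distinct from $3$ in case ${}^{3}\Dd_{4}$ is involved in $(\Gf,\fr)$, and such that $d$ is the order of $q$ in $\FM_{\ell_{d}}^{\times}$. Applying Theorem \ref{thelblock} to this $\ell_{d}$, the map $B\mapsto B\cap \EC(\Gf^{\fr},1)$ is a bijection from $\ell_{d}$-blocks $B\subset \mathcal{E}_{\ell_{d}}(\Gf^{\fr},1)$ onto $d$-series in $\EC(\Gf^{\fr},1)$. In particular, two unipotent representations lying in the same $d$-series belong to a common $\ell_{d}$-block, and are therefore $\sim_{\ell_{d}}$-equivalent. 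Since both representations already lie in $\EC(\Gf^{\fr},1)$, no intermediate non-unipotent step is required, so this relation is a one-step $\simu$-equivalence.

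Let $R$ denote the equivalence relation on $\EC(\Gf^{\fr},1)$ generated by ``being in a common $d$-series for some $d\in D$''. By the definition of $D$-series recalled just before the lemma, the $R$-equivalence classes are precisely the $D$-series contained in $\EC(\Gf^{\fr},1)$. The previous step shows that $R$ implies $\simu$, hence each $\simu$-class on $\EC(\Gf^{\fr},1)$ is a union of $D$-series. By hypothesis (1), $\EC(\Gf^{\fr},1)$ is itself a single $D$-series, and so must be a single $\simu$-class.

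I do not expect any substantial obstacle in the lemma itself: the argument is a purely formal matching of two partitions of $\EC(\Gf^{\fr},1)$ via the deep input of Theorem \ref{thelblock}. The genuine difficulty is shifted to the sections that follow, where one must actually produce, for $q$ even and for each exceptional type of $\Gf$, a finite set $D$ satisfying the arithmetic condition (2) (Zsigmondy-type existence of primes with prescribed order of $q$) and the combinatorial condition (1) on $d$-cuspidal pairs.
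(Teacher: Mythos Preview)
Your proof is correct and follows essentially the same approach as the paper: pick a prime $\ell_{d}$ for each $d\in D$ via hypothesis~(2), use Theorem~\ref{thelblock} to identify $d$-series with restrictions of $\ell_{d}$-blocks to $\EC(\Gf^{\fr},1)$, and then conclude from hypothesis~(1). The paper phrases the final step as ``$\EC(\Gf^{\fr},1)$ is the only non-empty subset of itself stable under $\sim_{\ell_{d}}$-equivalence for all $d\in D$'', while you introduce the auxiliary relation $R$ and identify its classes with $D$-series; these are just two ways of saying the same thing.
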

\begin{proof}
  Use (2) to pick a prime $\ell_{d}$ satisfying the assumptions of Theorem
  \ref{thelblock} and such that $q$ has order $d$ modulo $\ell_{d}$, for each $d\in
    D$. Then (1) tells us that  $\EC(\Gf^{\fr},1)$ is the only non-empty subset of itself
  that is stable under $\sim_{\ell_{d}}$-equivalence for all $d\in D$. A fortiori, it is
  the only non-empty subset of itself
  that is stable under $\sim_{\ell}$-equivalence for all primes $\ell\neq p$. Hence it is
  a single $\simu$-equivalence class.
\end{proof}

Finding a suitable $D$ will be done below via a case-by-case analysis (recall that we have
reduced to the case where $\Gf$ is simple). Then, in order to find suitable primes, the
following result will be useful.

\begin{The}[{\cite[Thm. V]{BirkVand}}]
  \label{theordre}
  For any $d\geq 3$, there exists a prime number $\ell$ such that $q$ has order $d$ modulo
  $\ell$, with the exception of $(q,d)=(2,6)$.
\end{The}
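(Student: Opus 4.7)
The plan is to prove this by the classical cyclotomic polynomial method, rephrasing the statement as the existence of a \emph{primitive prime divisor} of $q^d-1$, that is, a prime $\ell\mid q^d-1$ with $\ell\nmid q^e-1$ for every $e<d$. The key tool is the factorization
\[
q^d-1 = \prod_{e\mid d}\Phi_e(q),
\]
where $\Phi_e$ is the $e$-th cyclotomic polynomial. A prime $\ell$ is a primitive prime divisor of $q^d-1$ exactly when the multiplicative order of $q$ modulo $\ell$ equals $d$, so it is enough to exhibit a prime factor $\ell$ of $\Phi_d(q)$ with $\ell\nmid d$.

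First I would establish the classical lemma: if $\ell\mid\Phi_d(q)$ and $\ell\nmid d$, then the order of $q$ mod $\ell$ is exactly $d$; equivalently, if a prime divides both $\Phi_d(q)$ and $\Phi_{d'}(q)$ for some $d'\mid d$ with $d'<d$, then it divides $d$. Combining this with the refinement (a ``lifting the exponent'' computation applied to $\Phi_d(q)$) that for $\ell\mid d$ one has $v_\ell(\Phi_d(q))\leq 1$, and that such an $\ell$ must in fact be the largest prime factor $P(d)$ of $d$, one concludes that the ``non-primitive'' part of $\Phi_d(q)$ is at most $P(d)$. Thus it suffices to show
\[
\Phi_d(q) > P(d) \qquad \text{for all } d\geq 3,\ q\geq 2,\ (q,d)\neq (2,6).
\]

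Second, I would estimate $\Phi_d(q)$ from below. Writing $\Phi_d(q)=\prod_{\zeta}(q-\zeta)$ over primitive $d$-th roots of unity, the elementary bound $|q-\zeta|\geq q-1$ gives $\Phi_d(q)\geq (q-1)^{\varphi(d)}$. For $q\geq 3$ and $d\geq 3$ (so $\varphi(d)\geq 2$), this already yields $\Phi_d(q)\geq 2^{\varphi(d)}$, which dominates $P(d)\leq d$ outside a short list of small $(q,d)$ handled by direct inspection. For $q=2$, this crude bound degenerates, and I would instead use the sharper inequality $\Phi_d(q)\geq q^{\varphi(d)}/(q+1)$ (derived from $|q-\zeta|\geq q-1$ for the real root $-1$ when $2\mid d$, and $|q-\zeta|\geq \sqrt{q^2-q+1}$ otherwise), together with the observation $\varphi(d)\log 2>\log d$ for $d\geq 7$. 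The finitely many remaining cases $d\in\{3,4,5,6\}$ with $q=2$ are checked by hand: $\Phi_3(2)=7$, $\Phi_4(2)=5$, $\Phi_5(2)=31$, while $\Phi_6(2)=3=P(6)$, which is precisely the exceptional pair.

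The main obstacle, and the only genuinely delicate point, is the uniform bookkeeping between the multiplicative bound on $\Phi_d(q)$ and the intrinsic contribution $P(d)$: one needs the estimate to be \emph{strict}, and the case $q=2$ is tight enough that refined inequalities together with explicit computation are unavoidable. Everything else is formal manipulation of cyclotomic polynomials.
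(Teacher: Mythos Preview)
The paper does not prove this statement; it is quoted as a classical result of Birkhoff and Vandiver (essentially Zsygmondy's theorem on primitive prime divisors) and used as a black box in the subsequent case analysis. So there is no ``paper's proof'' to compare against.

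Your approach is the standard one and the overall architecture is correct: reduce to finding a prime factor of $\Phi_d(q)$ not dividing $d$; show that any non-primitive prime divisor $\ell$ of $\Phi_d(q)$ must equal $P(d)$ and satisfy $v_\ell(\Phi_d(q))\le 1$; then verify $\Phi_d(q)>P(d)$. The arithmetic lemmas you invoke (a common prime of $\Phi_d(q)$ and $\Phi_{d'}(q)$ with $d'\mid d$, $d'<d$ divides $d$; such a prime equals $P(d)$ because the order $m$ of $q$ mod $\ell$ divides $\ell-1$ and $d/m$ is an $\ell$-power; the lifting-the-exponent bound on $v_\ell$) are all correct.

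The one place your sketch is loose is the lower bound for $q=2$. The justification you give for $\Phi_d(q)\ge q^{\varphi(d)}/(q+1)$ does not make sense as written: for $d\ge 3$ there is no primitive $d$-th root equal to $-1$, so the parenthetical about ``the real root $-1$ when $2\mid d$'' is off. More seriously, even granting $\Phi_d(2)\ge 2^{\varphi(d)}/3$, combining it with $2^{\varphi(d)}>d$ only yields $\Phi_d(2)>d/3$, which need not exceed $P(d)$ (think $d=8$: your chain gives $\Phi_8(2)>8/3$, whereas you want $>2$, so it happens to suffice here, but the logic does not give $\Phi_d(2)>P(d)$ in general). In the classical proofs one either uses a sharper explicit lower bound for $\Phi_d(2)$ in terms of the prime factorization of $d$, or simply extends the hand check to a longer finite list. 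This is a repairable detail; the strategy is sound.
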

Note that if $d$ is the order of $q$ modulo $\ell$, then
obviously $\ell > d$.
We now proceed to the case-by-case analysis.

\begin{The}
  \label{theAn}
  If $\Gf$ has type $\Aa_n$ or ${}^{2}\Aa_n$ then $\dl{\Gf^{\fr}}{1}$ is composed of only one $\simu$-equivalence class.
\end{The}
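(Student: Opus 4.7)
The case $q$ odd follows from Theorem \ref{theqodd}, so the plan is to handle $q$ even. In this setting every prime $\ell\ne p=2$ is automatically odd, and every prime is good for $\Gf$ of type $\Aa$ or $^{2}\Aa$, so the side conditions in Theorem \ref{thelblock} are vacuous and the only real constraint is the existence of an odd prime $\ell$ with the order of $q$ mod $\ell$ equal to the chosen $d$. Throughout I apply Lemma \ref{lemDseries} with a carefully chosen $D$.

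For type $\Aa_{n-1}$, where $\Gf^{\fr} = \mathrm{GL}_{n}(\FM_{q})$, I would use the classical fact that $\mathrm{GL}_{m}(\FM_{q})$ admits no unipotent cuspidal representation for $m \geq 2$. Consequently $\dl{\Gf^{\fr}}{1}$ is a single ordinary Harish-Chandra ($d=1$) series, so $D = \{1\}$ satisfies (i) of Lemma \ref{lemDseries}. Condition (ii) then holds provided $q-1$ has an odd prime factor, i.e. as soon as $q \geq 4$. For the remaining case $q = 2$, where $q-1=1$, I would fall back on the standard description of $d$-series for $\mathrm{GL}_{n}$ (they are parametrized by $d$-cores of partitions of $n$) and take $D = \{2, 3\}$; condition (ii) is witnessed by $\ell = 3$ (since $\mathrm{ord}_{3}(2) = 2$) and $\ell = 7$ (since $\mathrm{ord}_{7}(2) = 3$).

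For type $^{2}\Aa_{n-1}$, where $\Gf^{\fr} = \mathrm{U}_{n}(\FM_{q})$, Ennola duality makes the $2$-Harish-Chandra theory of $\mathrm{U}_{n}$ strictly parallel to the $1$-Harish-Chandra theory of $\mathrm{GL}_{n}$. In particular $\mathrm{U}_{m}(\FM_{q})$ has no unipotent $2$-cuspidal for $m \geq 2$, and $\dl{\Gf^{\fr}}{1}$ is a single $2$-series. With $D = \{2\}$, condition (ii) is supplied by any odd prime divisor of $q+1 \geq 3$, which always exists when $q$ is even. This part of the argument therefore covers all even $q$, including $q=2$.

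The main obstacle is thus the combinatorial claim for $\mathrm{GL}_{n}(\FM_{2})$: that the equivalence on partitions of $n$ generated by ``same $2$-core'' together with ``same $3$-core'' is universal, for every $n \geq 1$. I would attempt a rim-hook argument connecting any $\lambda \vdash n$ to a fixed reference partition such as $(n)$, exploiting that $(n)$ and $\lambda$ have the same $2$-core whenever their sizes agree modulo the relevant triangular-number sizes, and bridging the remaining cases via the $3$-core relation. Should $D = \{2,3\}$ fail for exceptional values of $n$, one retains the flexibility to enlarge $D$ to any subset of $\NM_{\geq 2} \setminus \{6\}$, these being exactly the integers which arise as $\mathrm{ord}_{\ell}(2)$ for some odd prime $\ell$ by Theorem \ref{theordre}.
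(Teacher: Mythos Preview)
Your treatment of $q$ odd (Theorem \ref{theqodd}), of $^{2}\Aa_{n}$ with $q$ even (single $2$-series, witnessed by any odd prime dividing $q+1$), and of $\Aa_{n}$ with even $q\geq 4$ (single $1$-series, witnessed by any odd prime dividing $q-1$) is correct and matches the paper's argument essentially verbatim.

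The gap is precisely where you flag it: $\Aa_{n}$ with $q=2$. You correctly select $D=\{2,3\}$ and the witnessing primes $3$ and $7$, but the combinatorial claim that ``same $2$-core together with same $3$-core'' connects all partitions of $n$ is left unproved, and your rim-hook sketch is not an argument; your fallback of enlarging $D$ is equally non-constructive. The paper does not attack the partition combinatorics directly either. Instead it invokes Ennola duality \cite[Thm.~3.3]{bmm}: since $\Phi_{2}(-q)\sim\Phi_{1}(q)$ and $\Phi_{3}(-q)=\Phi_{6}(q)$, the $\{2,3\}$-series of $\Aa_{n}$ are in bijection with the $\{1,6\}$-series of $^{2}\Aa_{n}$. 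For the latter, \cite[Prop.~3.3.9]{lanardlblocs} shows there is a unique $\{1,6\}$-series provided every $1$-series of $^{2}\Aa_{n}$ has defect $k$ satisfying $(k^{2}-3k+2)/2\leq n-2$; this follows from the general bound $k(k+1)/2\leq n+1$ of \cite[Lem.~3.3.6]{lanardlblocs} via the identity $(k^{2}-3k+2)/2 = k(k+1)/2 - (2k-1)$, which handles all $k\geq 2$, together with the trivial check at $k=1$. That defect inequality is the concrete ingredient missing from your proposal.
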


\begin{proof}

  The case $q$ odd is covered by Theorem \ref{theqodd}, so let us assume $q$ even.
  Since $q+1$ is odd,  any prime divisor $\ell$ of $q+1$ is odd and good for $\Gf$, and the
  order of $q$ modulo $\ell$ is $2$. Therefore Theorem
  \ref{thelblock} tells us that the $\ell$-blocks of $\EC(\Gf^{\fr},1)$ are the
  $2$-series.

  Now, for type $\Aa_{n}$ (split case), it is well known that $\EC(\Gf^{\fr},1)$ is a single $1$-series. Now for type $^{2}\Aa_{n}$ we can use an  ``Ennola duality'' \cite[Thm 3.3]{bmm}  to reduce ourselves to the case of $\Aa_{n}$. More specifically, \cite[Thm 3.3]{bmm} implies that there is a bijection between the set of  unipotent characters for a group of type $\Aa_{n}$ and that of $^{2}\Aa_{n}$. This bijection sends $d$-series to $d'$-series, where $d'$ is such that $\Phi_{d'}(-x) =\varepsilon \Phi_{d}(x)$ with $\varepsilon \in \{\pm 1\}$. Since $\Phi_{2}(-x) = -x + 1 = - \Phi_{1}(x)$, it sends $1$-series to $2$-series. It follows that in type $^{2}\Aa_{n}$, the set $\EC(\Gf^{\fr},1)$ is a single $2$-series. Therefore it is a single $\ell$-block
  for $\ell|(q+1)$ hence also a single  $\sim^{1}$-equivalence
  class.

  Similarly, when $q>2$,  any prime $\ell$ dividing $q-1$ is odd and good for $\Gf$, hence Theorem
  \ref{thelblock} tells us that the $\ell$-blocks of $\EC(\Gf^{\fr},1)$ for $\ell|(q-1)$ are the
  $1$-series. In type $\Aa_{n}$ and for such an $\ell$, it follows that the set $\EC(\Gf^{\fr},1)$ is a single
  $\ell$-block hence also a single  $\sim^{1}$-equivalence class.

  It remains to deal with the case $q=2$ in type $\Aa_{n}$.
  In this case, we will show that $\dl{\Gf^{\fr}}{1}$ is a
  $\{2,3\}$-series, and then we can conclude  using Theorem \ref{theordre} and Lemma \ref{lemDseries}.
  To compute $\{2,3\}$-series for $\Aa_{n}$, we again use Ennola duality, which asserts that they
  correspond bijectively to $\{1,6\}$-series for $^{2}\Aa_{n}$. These $\{1,6\}$-series have been
  computed in \cite[Section 3.3]{lanardlblocs}, to
  which we refer for the notion of ``defect'' of a $1$-series.
  In particular, \cite[Prop. 3.3.11]{lanardlblocs} shows that there is a unique
  $\{1,6\}$-series for $^{2}\Aa_{n}$ provided we can prove that the defect $k$ of any
  1-series of $^{2}\Aa_{n}$   satisfies $(k^2-3k+2)/2\leq n-2$. But by
  \cite[Lem. 3.3.8]{lanardlblocs}, we at least have
  $k(k+1)/2 \leq n+1$. Since $(k^2-3k+2)/2 = k(k+1)/2 -(2k-1)$, we thus get
  the desired equality if $2k-1 \geq 3$, that is $k\geq 2$. Moreover, for $k=1$, we have
  $(k^2-3k+2)/2=0$, thus the desired inequality also holds.
\end{proof}

\begin{The}
  \label{theclassic}
  If $\Gf$ has type $\Bb_n$, $\Cc_n$, $\Dd_n$ or ${}^{2}\Dd_n$ then $\dl{\Gf^{\fr}}{1}$ is composed of only one $\simu$-equivalence class.
\end{The}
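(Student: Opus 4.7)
The plan is to follow the pattern of Theorem \ref{theAn}: split the argument by parity of $q$, and in the even case construct a suitable finite set $D \subset \NM^*$ so that Lemma \ref{lemDseries} applies. The case $q$ odd is already covered by Theorem \ref{theqodd}, so the work lies entirely in the case $q$ even.

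Assume $q$ is even. A convenient feature of the types $\Bb_n, \Cc_n, \Dd_n, {}^2\Dd_n$ is that every odd prime is good for $\Gf$ and ${}^3\Dd_4$ is not involved, so Theorem \ref{thelblock} applies to every odd prime $\ell \neq p$. If $q > 2$, one picks odd prime divisors $\ell_1 \mid (q-1)$ and $\ell_2 \mid (q+1)$, which give $d=1$ and $d=2$ respectively; by Lemma \ref{lemDseries} it then suffices to show that $\dl{\Gf^{\fr}}{1}$ is a single $\{1,2\}$-series. I would verify this combinatorially via the symbolic description of unipotent characters: unipotent characters of classical groups are labelled by symbols of prescribed defect, $1$-series correspond to the $2$-core of a symbol, $2$-series to its $2$-cocore, so that any symbol can be reduced to the (unique) minimal cuspidal symbol by alternating removals of $1$-hooks and $2$-hooks. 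This identifies $\dl{\Gf^{\fr}}{1}$ as a single $\{1,2\}$-series, and hence a single $\simu$-class.

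The remaining case is $q=2$, where $q-1 = 1$ has no prime divisors, so $d=1$ is no longer available. Imitating the argument in Theorem \ref{theAn} for type $\Aa_n$ at $q=2$, I would try $D=\{2,d\}$ for a suitable $d \geq 3$ (avoiding the exception $(q,d)=(2,6)$ of Theorem \ref{theordre}) so that an appropriate prime $\ell_d$ can be chosen. Then, using Ennola duality to transport the question (which swaps $1$-series and $2$-series in the split/non-split classical pairs), and a symbol-based defect inequality in the spirit of \cite[Sec.~3.3]{lanardlblocs}, one verifies that $\dl{\Gf^{\fr}}{1}$ is a single $\{2,d\}$-series; the conclusion follows from Lemma \ref{lemDseries}.

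The main obstacle I anticipate is the combinatorial verification that $\dl{\Gf^{\fr}}{1}$ is a single $\{1,2\}$-series (respectively $\{2,d\}$-series when $q=2$). This is a concrete but delicate analysis of symbols and their $d$-hooks/$d$-cohooks, and must be carried out separately in each of the four classical families, since the defect parity and the presence of degenerate characters in type $\Dd_n$ differ between them.
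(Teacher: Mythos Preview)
Your overall strategy is sound and close to the paper's, but there are two points where the execution diverges in ways worth noting.

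First, the paper does \emph{not} split the even-$q$ case into $q>2$ and $q=2$. It takes $D=\{2,4\}$ uniformly: a prime dividing $q+1$ supplies $d=2$ (any such prime is odd since $q$ is even, hence good), and Theorem~\ref{theordre} supplies a prime for $d=4$ (with no exception at $q=2$). This avoids your case split entirely. The key technical step is then an explicit involution $\varphi$ on symbols, sending $\{S,T\}$ to $\{S_e\cup T_o,\, T_e\cup S_o\}$ (swap the odd entries between rows). This $\varphi$ interchanges $1$-hooks with $1$-cohooks and \emph{fixes} $2$-cohooks, so it carries $\{2,4\}$-series bijectively to $\{1,4\}$-series. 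The paper then cites \cite[Prop.~3.4.6]{lanardlblocs} to verify that $\EC(\Gf^{\fr},1)$ is a single $\{1,4\}$-series, via the defect inequalities you allude to. Your ``Ennola duality'' is essentially this $\varphi$, but note that the choice $d=4$ is what makes it work cleanly: $\varphi$ fixes $d=4$, so $\{2,4\}\leftrightarrow\{1,4\}$. For a generic $d$ in your $q=2$ case, $\varphi$ would send $\{2,d\}$ to $\{1,d'\}$ with $d'\neq d$ in general, which may not be covered by available results (e.g.\ $d=3\mapsto d'=6$, blocked at $q=2$).

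Second, your justification that $\EC(\Gf^{\fr},1)$ is a single $\{1,2\}$-series has a gap. You argue that ``any symbol can be reduced to the unique minimal cuspidal symbol by alternating removals of $1$-hooks and $2$-hooks'', but this is a statement about reduction \emph{across ranks}, whereas a $D$-series is an equivalence class of symbols \emph{at fixed rank $n$}, generated by ``same $1$-core'' and ``same $1$-cocore''. That every symbol eventually reduces to the rank-$0$ symbol does not by itself show that any two rank-$n$ symbols are connected by a chain at rank~$n$. (One can make it work --- e.g.\ by showing that the $2$-principal series at rank $n$ meets every defect class --- but this needs an argument, and the paper sidesteps it by passing to $\{1,4\}$ and invoking the defect inequalities already established in \cite{lanardlblocs}.) Also, minor terminology: $2$-series here correspond to $1$-\emph{cohooks}, not ``$2$-hooks'' or a ``$2$-cocore'' in the partition sense.
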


\begin{proof}
  Again, the case $q$ odd is covered by Theorem \ref{theqodd}, so we assume $q$ is even.
  Picking a divisor of $q+1$ and applying Theorem \ref{theordre} to $d=4$, we see thanks to
  Lemma \ref{lemDseries} that it suffices to prove that $\EC(\Gf^{\fr},1)$ is a single $\{2,4\}$-series.

  We will first exhibit a bijection between $\{2,4\}$-series and $\{1,4\}$-series, which will
  leave us with actually proving that $\EC(\Gf^{\fr},1)$ is a single $\{1,4\}$-series

  To do so, we use the combinatorics of Lusztig symbols as in \cite[Section 3.4]{lanardlblocs}.
  Let $\Sigma=\{S,T\}$ be a symbol ($S,T \subseteq \mathbb{N}$). We define $S_e$ to be the subset of $S$ composed of the even elements, that is $S_e:=S \cap 2\mathbb{N}$, and $S_o$ for the subset of odd elements, $S_o:=S \cap (2\mathbb{N}+1)$. We do the same thing for $T$, $T=T_e \cup T_o$. Now, we define an involution $\varphi$ on symbols by $\varphi(\Sigma):=\{S_e \cup T_o,T_e \cup S_o\}$. Note that $\rank(\Sigma)=\rank(\varphi(\Sigma))$ and that $\defect(\Sigma)$ and $\defect(\varphi(\Sigma))$ have the same parity. In the case of $\Dd_n$ or ${}^{2}\Dd_n$, when the defect is even, the congruence modulo 4 is not necessarily preserved by $\varphi$. However, the congruence mod 4 of $\defect(\varphi(\Sigma))$ only depends on the congruence modulo 4 of $\defect(\Sigma)$ and $\rank(\Sigma)$. Indeed
  $$\defect(\varphi(\Sigma)) - \defect(\Sigma) =| |S_e|+|T_o|-|S_o|-|T_e|| - | |S_e|+|S_o|-|T_e|-|T_o||.$$
  Thus, the congruence modulo 4 of $\defect(\varphi(\Sigma)) - \defect(\Sigma)$ depends on the parity of $|S_o|+|T_o|$ (or $|S_e|+|T_e|$). Since the defect is even, $|S|+|T|$ is even, hence $|S_o|+|T_o| \equiv |S_e|+|T_e| \pmod{2}$. Now,
  $$\rank(\Sigma)=\sum_{x\in S} x + \sum_{y\in T} y + \left[\left(\frac{|S|+|T|-1}{2}\right)^2\right],$$
  thus
  $$\rank(\Sigma)\equiv  |S_o|+|T_o| + \left[\left(\frac{|S|+|T|-1}{2}\right)^2\right]  \pmod{2}.$$
  But, $|S|+|T|$ is even, $|S|+|T|=2k$ and $[((|S|+|T|-1)/2)^2]=k(k-1)$ is even. Thus $\defect(\varphi(\Sigma)) - \defect(\Sigma)$ only depends modulo 4 on the parity of $\rank(\Sigma)$.

  Now, under the involution $\varphi$, 1-hooks correspond to 1-co-hooks and
  2-co-hooks to 2-co-hooks. Hence, $\varphi$ sends $\{2,4\}$-series to $\{1,4\}$-series,
  as claimed above.

  We now prove that $\dl{\Gf^{\fr}}{1}$ is a $\{1,4\}$-series in the different cases.

  For $\Gf$ of type $\Bb_n$ or $\Cc_n$, by \cite[Prop. 3.4.6]{lanardlblocs}, we need
  to show that if we have a 1-series of defect $k$ then $(k^2-4k+3)/4\leq n-2$. If
  we have a 1-series of defect $k$ and rank $n$ then $(k^2-1)/4 \leq n$. Since
  $(k^2-4k+3)/4 = (k^2-1)/4 -(k-1)$, we get the desired equality if $k-1 \geq 2$,
  that is $k\geq 3$. For $k=1$, $(k^2-4k+3)/4=0$, thus the desired inequality also holds.

  For $\Gf$ of type $\Dd_n$ or ${}^{2}\Dd_n$, by \cite[Prop. 3.4.6]{lanardlblocs},
  we need to show that if we have a 1-series of defect $k$ then $(k^2-4k+4)/4\leq
    n-2$. If we have a 1-series of defect $k$ and rank $n$ then $k^2/4 \leq n$. Since
  $(k^2-4k+4)/4 = k^2/4 -(k-1)$, we get the desired equality if $k-1 \geq 2$, that
  is $k\geq 3$. For $k=2$, $(k^2-4k+3)/4=0$, thus the desired inequality still holds.
\end{proof}

\begin{The}
  \label{theExeptionel}
  If $\Gf$ has type $\Ff_4$, ${}^{3}\Dd_4$, $\Gg_2$, $\Ee_6$, ${}^{2}\Ee_6$, $\Ee_7$ or $\Ee_8$ then $\dl{\Gf^{\fr}}{1}$ is composed of only one $\simu$-equivalence class.
\end{The}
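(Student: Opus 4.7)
The plan is to proceed exactly as in the proof of Theorem \ref{theclassic}: for each exceptional type, exhibit a finite set $D$ of positive integers such that (i) $\EC(\Gf^{\fr},1)$ is a single $D$-series, and (ii) each $d \in D$ is the order of $q$ modulo some prime $\ell$ satisfying the hypotheses of Theorem \ref{thelblock}. Then Lemma \ref{lemDseries} will yield the conclusion. The key input is the full classification of unipotent characters and their $d$-cuspidal supports for exceptional groups, as available in Carter's book, in \cite{bmm}, and via the CHEVIE package; in each of the seven cases one can read off the $d$-series decomposition of $\EC(\Gf^{\fr},1)$ and choose $D$ accordingly.

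For condition (ii), Theorem \ref{theordre} provides a suitable prime $\ell$ for each $d \geq 3$ outside the exception $(q,d)=(2,6)$, while for $d=1,2$ any odd good prime divisor of $q-1$ or $q+1$ works. The bad primes are $\{2,3\}$ for every exceptional type except $\Ee_8$ (where $5$ is also bad), plus the extra exclusion $\ell \neq 3$ for ${}^{3}\Dd_4$. These constraints are mild as soon as $q$ is not too small.

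The actual work is a type-by-type analysis, choosing $D$ as small as possible. For most types I would expect a set $D$ of size two or three to suffice, typically $\{1,2\}$ together with one or two larger values chosen to bridge the $d$-cuspidal unipotent characters of $\Gf$ (such as the four $\Gg_2[\zeta]$'s, or the various cuspidal unipotent families appearing in $\Ff_4$, $\Ee_7$, and $\Ee_8$). The main obstacle I anticipate is the bookkeeping for small $q$: for $q=2$ neither $d=1$ (since $q-1=1$) nor $d=2$ (since $\ell=3$ is bad) can be used, so one must replace them by larger $d$'s whose cyclotomic values $\Phi_d(2)$ admit good prime divisors while avoiding the $(2,6)$ exception; this in turn forces a subtler choice of $D$ and a fresh verification of~(i). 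Analogous but milder issues arise for $q=3,4,5$ and, in the case of $\Ee_8$, also for $q=7$. Each such small-$q$ exception is handled separately by exhibiting an ad hoc $D$ and checking directly from the tables that $\EC(\Gf^{\fr},1)$ remains a single $D$-series.
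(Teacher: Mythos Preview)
Your overall strategy matches the paper's, but there is a genuine gap: you assume that for every type and every $q$ one can always find an admissible set $D$ so that Lemma \ref{lemDseries} applies. This fails for $\Gg_2$ (for several values of $q$) and for $\Ff_4$ with $q=2$. In $\Gg_2$, the two principal-series unipotents $\phi_{1,3}'$ and $\phi_{1,3}''$ are $d$-cuspidal for \emph{every} $d\geq 3$, so each of them forms a singleton $d$-series for all such $d$; the only way to connect them via $d$-series would be $d\in\{1,2\}$, but for small $q$ (e.g.\ $q\in\{2,3,5,7\}$) there is no prime $\ell\geq 5$ with $q$ of order $1$ or $2$ modulo $\ell$, so condition (ii) of Lemma \ref{lemDseries} cannot be met. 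Likewise in $\Ff_4(2)$, the characters $\phi_{9,6}'$ and $\phi_{9,6}''$ are $d$-cuspidal for every $d\geq 3$, $d\neq 6$, while $d=6$ is excluded by the Birkhoff--Vandiver exception $(q,d)=(2,6)$ and $d\in\{1,2\}$ are unavailable since $q-1=1$ and $q+1=3$ is bad. Hence no admissible $D$ exists in these cases, and your claim that ``each such small-$q$ exception is handled separately by exhibiting an ad hoc $D$'' is simply false.

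What the paper does to get past this obstruction is to abandon $d$-Harish-Chandra theory at that point and invoke explicit $\ell$-block computations for \emph{bad} primes, where Theorem \ref{thelblock} does not apply. Concretely, for $\Gg_2$ with $q$ odd one uses the $2$-block tables of Hiss--Shamash to see that $\phi_{1,3}'$ and $\phi_{1,3}''$ lie in the principal $2$-block; for $\Gg_2$ with $q$ even one uses their $3$-block tables instead; and for $\Ff_4(2)$ one uses Hiss's determination of the $3$-blocks to place $\phi_{9,6}'$ and $\phi_{9,6}''$ in the principal $3$-block. This extra ingredient---direct block information at bad primes---is the missing idea in your proposal. (A minor aside: the obstruction in $\Gg_2$ comes from the principal-series characters $\phi_{1,3}'$, $\phi_{1,3}''$, not from the cuspidal $\Gg_2[\zeta]$'s as you suggest.)
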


\begin{proof}
  First, let us begin by $\Gf$ of type ${}^{3}\Dd_4$, $\Ee_6$, ${}^{2}\Ee_6$ or $\Ee_7$. To
  use Theorem \ref{thelblock}, we need to have $\ell \geq 5$. Thus by Theorem \ref{theordre}
  we can use every $d\geq 3$ and $d\neq 6$.
  Looking at the list of unipotent characters in \cite[\S 13.9]{carter} and tables of
  $d$-series in \cite{bmm}, we see that we may apply Lemma \ref{lemDseries} with the
  following sets:  $D=\{3,12\}$ for ${}^{3}\Dd_4$, $D=\{3,4\}$ for $\Ee_{6}$, $D=\{3,4,12,18\}$ for $^{2}\Ee_{6}$ and $D=\{3,4,14\}$ for $\Ee_7$.

  \medskip

  For $\Ee_8$, we need $\ell \geq 7$ to use Theorem \ref{thelblock}. We will again conclude with Lemma \ref{lemDseries} by looking at tables. If $q \neq 2$, we can take $d\geq 5$ by Theorem \ref{theordre} and $D=\{5,6,7,8,10,30\}$ works. If $q=2$, we can take this time $d\geq 3$ $d\neq 4,6$ (since the order of 2 modulo 7 is 3) and we choose $D=\{3,5,8,10,15\}$.

  \medskip

  For $\Ff_4$, the same methods apply for $q\neq 2$ with $D=\{3,4,6,12\}$ (we can choose any $d\geq 3$). When $q=2$, Lemma \ref{lemDseries} is not enough to conclude. The set of $d$ such that $q$ is of order $d$ modulo some $\ell \geq 3$ is $d \geq 3$ and $d \neq 6$ by Theorem \ref{theordre}. However, the two unipotent characters $\phi_{9,6}'$ and $\phi_{9,6}''$ (with the notations of \cite[Section 13.9]{carter}) are $d$-cuspidal for every $d \geq 3$, $d \neq 6$. The rest of the unipotent characters $\EC(\Gf^{\fr},1) \setminus \{\phi_{9,6}',\phi_{9,6}''\}$ form a $\{3,4,8,12\}$-series. To deal with $\phi_{9,6}'$ and $\phi_{9,6}''$, we take $\ell=3$, and they are in the principal $3$-block of $\Ff_4(2)$ by \cite{hissF4}.

  \medskip

  We are left with $\Gg_2$. If $q$ is odd, we can take $d\geq 3$. The two unipotent characters  $\phi_{1,3}'$ and $\phi_{1,3}''$ are $d$-cuspidal for every $d \geq 3$ and $\EC(\Gf^{\fr},1) \setminus \{\phi_{1,3}',\phi_{1,3}''\}$ is a $\{3,6\}$-series. We take $\ell=2$ and the tables in \cite{HissShamash2} give us that $\phi_{1,3}'$ and $\phi_{1,3}''$ are in the principal $2$-block. This concludes the case $q$ odd. Now, if $q$ is even and $q\neq 2$, we can still use $d\geq 3$. Thus, we have the same issue with $\phi_{1,3}'$ and $\phi_{1,3}''$. We can no longer use $\ell=2$ but we can use $\ell =3$, and the tables in \cite{HissShamash3} gives us that $\phi_{1,3}'$ and $\phi_{1,3}''$ are in the principal $3$-block. Finally, if $q=2$, we can now use $d\geq 3$ and $d\neq 6$. All the unipotent characters are in the principal $3$-series apart from $\phi_{1,3}'$, $\phi_{1,3}''$, $\phi_{2,1}$ and $\Gg_2[-1]$. But we see in \cite{HissShamash3} that they all are in the principal $3$-block.

\end{proof}

We have now completed the proof of Theorem \ref{primitive} for all
reductive $(\Gf,\fr)$. Indeed, by Proposition \ref{proprimitif},
the statement of this theorem is equivalent to the
statement that there is only one  $\sim$-equivalence class in $\Irr(\Gf^{\fr})$. Then
Proposition \ref{prounipotent} shows it is enough to prove that there is only one
$\sim^{1}$-equivalence class in $\EC(\Gf^{\fr},1)$, and Proposition
\ref{proreduceadjoint} reduces to the case of simple (and adjoint) $\Gf$. All the simple
cases are then covered by Theorems \ref{theqodd}, \ref{theAn}, \ref{theclassic} and \ref{theExeptionel}.

\section{\texorpdfstring{$p$}{p}-adic groups}

Here $\Gf$ is a reductive group over a local non-archimedean field $F$
with residue field $k_{F}:=\FM_{q}$. We put $G:=\Gf(F)$. For any
commutative ring $R$ in which $p$ is invertible, we denote by
$\Rep_{R}(G)$ the category of smooth $R G$-modules. The Bernstein center
$\ZG_{R}(G)$  is by definition the center of this category.

\subsection{The depth \texorpdfstring{$0$}{0} summand} \label{sec:depth-0-summand}
Denote by $\BT$ the (reduced) Bruhat-Tits building
associated with $\Gf$. This is a polysimplicial complex equipped with a polysimplicial
action of $G$. We will write $\BT_{\bullet}$ for the set of polysimplices of $\BT$, which
are also called
facets.
To any facet $\sigma$ of $\BT$ is associated a parahoric subgroup
$G_{\sigma}$, which is open, compact and contained in the pointwise stabilizer of
$\sigma$. It is the group $\Gf_{\sigma}(\OC_{F})$ of $\OC_{F}$-valued points of a certain
smooth $\OC_{F}$-model $\Gf_{\sigma}$ of $\Gf$. We denote by $\o\Gf_{\sigma}$ the reductive quotient of
the special fiber of $\Gf_{\sigma}$. Then $\o G_{\sigma}:=\o\Gf_{\sigma}(\FM_{q})$ is also
the quotient of $G_{\sigma}$ by its pro-$p$-radical $G_{\sigma}^{+}$. Since
$G_{\sigma}^{+}$ is open and pro-$p$, there is an averaging idempotent $e_{\sigma}^{+}\in
  \HC_{R}(G_{\sigma})\subset \HC_{R}(G)$ in the Hecke algebra $\HC_{R}(G)$ of $G$ with
coefficients in $R$. Here $\HC_{R}(G)$ denotes the $R$-algebra of locally constant
$R$-valued distributions on $G$, which acts on any smooth  $RG$-module $V$, and $e_{\sigma}^{+}$
denotes the distribution that averages a function over $G_{\sigma}^{+}$.
\begin{Def}
  A smooth $RG$-module $V$ has \emph{depth 0} if $V=\sum_{x\in \BT_{0}} e_{x}^{+}V$.
\end{Def}
Here $\BT_{0}$ is the set of vertices of $\BT$.
It is known \cite[Appendice A]{datFinitude} that the full
subcategory $\Rep^{0}_{R}(G)$ of $\Rep_{R}(G)$ composed of depth $0$ objects is a direct
factor abelian subcategory. Correspondingly, there is an idempotent
$\varepsilon_{0}\in\ZG_{R}(G)$ that projects any object $V$ onto its depth $0$ factor.
When $R=\CM$, the Bernstein decomposition of $\Rep^{0}_{\CM}(G)$ as a sum of blocks was
made explicit by Morris in \cite{morris}.

\subsection{(un)refined depth \texorpdfstring{$0$}{0} types}\label{sec:unref-depth-texorpdf}
Following Moy and Prasad, we define an \emph{unrefined depth
  $0$ type} to be a pair $(\sigma,\pi)$, where $\sigma \in \BT_{\bullet}$ is a facet and $\pi$ is an
irreducible complex cuspidal representation of $\o G_{\sigma}$. We also denote by $\pi$
the inflation of $\pi$ to $G_{\sigma}$. Then \cite[Theorem 4.8]{morris} tells us that
${\rm ind}_{G_{\sigma}}^{G}(\pi)$ is a projective generator for a certain sum of Bernstein
components of depth $0$. Obviously, this representation only depends on the $G$-conjugacy
class $\tG$ of $(\sigma,\pi)$,  whence a direct factor $\Rep_{\CM}^{\tG}(G)$ of
$\Rep_{\CM}^{0}(G)$.  Denote by $\TG$ the set of $G$-conjugacy classes of such pairs. It
turns out that, for $\tG,\tG'\in\TG$, the factors $\Rep_{\CM}^{\tG}(G)$ and
$\Rep_{\CM}^{\tG'}(G)$ are either orthogonal or equal. Whence an equivalence relation
$\sim$ on $\TG$ and a decomposition
$$ \Rep^{0}_{\CM}(G) = \prod_{[\tG]\in \TG/\sim} \Rep^{[\tG]}_{\CM}(G).$$
We refer to \cite[Section 2.2]{lanardlblocs} for an explicit description of the relation $\sim$.

In general, the factors $\Rep^{[\tG]}_{\CM}(G)$ are further decomposable. Suppose
$(\sigma,\pi)\in[\tG]$ and denote by $G_{\sigma}^{1}$ the  pointwise
stabilizer of $\sigma$ in the closed subgroup $G^{1}$ of $G$ generated
by compact subgroups (see \cite[\S 7.7]{kaletha_prasad}). By \cite[Theorem 4.9]{morris}, for any irreducible subquotient
$\pi^{1}$ of ${\rm ind}_{G_{\sigma}}^{G_{\sigma}^{1}}(\pi)$, the
representation ${\rm ind}_{G_{\sigma}^{1}}^{G}(\pi^{1})$
is the projective generator
of a single Bernstein block. Also, any block inside $\Rep^{[\tG]}_{\CM}(G)$ is
obtained in this way. For later reference, let us notice that, since
$G_{\sigma}^{1}/G_{\sigma}$ is abelian, any other irreducible subquotient of
${\rm ind}_{G_{\sigma}}^{G_{\sigma}^{1}}(\pi)$ is a twist of $\pi^{1}$ by a
character of $G_{\sigma}^{1}/G_{\sigma}$.

\subsection{Systems of idempotents}
Fix $\mathfrak{t} \in \TG$, and let $[\mathfrak{t}]$ denote its equivalence class.
For a facet $\tau\in\BT_{\bullet}$, define $e_{[\tG],\tau}\in\HC_{R}(G_{\tau})$ to be the idempotent
that cuts out all irreducible representations of $\o G_{\tau}$ whose cuspidal support
contains $(\o G_{\sigma},\pi)$ for some facet $\sigma$ containing $\tau$ and $\pi$ such
that $(\sigma,\pi)\in[\tG]$. Then the system $(e_{[\tG],\tau})_{\tau\in \BT_{\bullet}}$ is
\emph{$0$-consistent} in the sense of \cite[Def. 2.1.4]{lanardlblocs},
that is :
\begin{enumerate}
  \item $\forall x\in\BT_{0}$, $\forall g\in G$, $e_{[\tG], gx}=ge_{[\tG],x}g^{-1}$
  \item $\forall \tau\in\BT_{\bullet}$, $\forall x\in\BT_{0}$,
        $x\in \bar\tau\Rightarrow e_{[\tG],\tau}=e_{\tau}^{+}e_{[\tG],x}$.
\end{enumerate}
Moreover, Proposition
2.2.3 of \cite{lanardlblocs} implies that
$$ \Rep_{\CM}^{[\tG]}(G)= \left\{V\in\Rep_{\CM}(G),\, V=\sum_{x\in \BT_{0}} e_{[\tG],x}V\right\}.$$
Denote by $N$ the l.c.m of all $|\o G_{\tau}|$, $\tau\in\BT_{\bullet}$. Since each
$e_{[\tG],\tau}$ lies in $\HC_{\o\ZM[\frac 1N]}(G)$, we see that the summand
$\Rep^{[\tG]}_{\CM}(G)$ is ``defined over $\o\ZM[\frac 1N]$'' in the sense that the
corresponding idempotent $\varepsilon_{[\tG]}$ of $\ZG_{\CM}(G)$ lies in $\ZG_{\o\ZM[\frac
      1N]}(G)$, and we have a decomposition
\[
  \rep[\o\ZM[\frac 1N]][0]{G}=\prod_{[\mathfrak{t} ]\in \TG/{\sim}}\rep[\o\ZM[\frac 1N]][[\mathfrak{t}]]{G}.
\]

Now, to a subset $T$ of $\TG/\sim$  we associate an idempotent  $\varepsilon_{T}:=\sum_{[\tG]\in
    T}\varepsilon_{[\tG]} \in \ZG_{\o\ZM[\frac 1N]}(G)$ in the Bernstein center, and a
consistent system of idempotents $(e_{T,\tau})_{\tau\in\BT_{\bullet}}$ given by  $e_{T,\tau}:=\sum_{[\tG]\in
    T}e_{[\tG],\tau}$ for any facet $\tau\in \BT_{\bullet}$.
The following observation is crucial for the argument.
\begin{Pro}
  \label{proeTintegral}
  We have $\varepsilon_{T}\in\ZG_{\o\ZM[\frac 1p]}(G) \Leftrightarrow \forall\tau\in\BT_{\bullet}$, $e_{T,\tau}\in \HC_{\o\ZM[\frac 1p]}(G_{\tau})$.
\end{Pro}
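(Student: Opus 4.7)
The plan is to reduce both implications to a single identity in the Hecke algebra, namely
\[
\varepsilon_T e_\tau^+ = e_{T,\tau} \quad \text{in } e_\tau^+ \HC_R(G) e_\tau^+
\]
for every facet $\tau \in \BT_\bullet$, where $R$ is any ring over which everything is a priori defined (e.g.\ $R = \o\ZM[\frac 1N]$). Once this identity is in hand, both directions will follow essentially formally.

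To establish the identity, I first treat the case of a vertex $x \in \BT_0$. The universal depth-$0$ representation $V_x := {\rm ind}_{G_x^+}^G R$ has its $G_x^+$-invariants $V_x^{G_x^+} = e_x^+ V_x$ canonically isomorphic to $e_x^+ \HC_R(G) e_x^+$ as a right module over itself, hence faithful; so $\varepsilon_T$ acts on it via multiplication by $\varepsilon_T e_x^+$. On the other hand, the characterization of $\Rep^T$ given by Proposition 2.2.3 of \cite{lanardlblocs}, namely $V \in \Rep^T \iff V = \sum_y e_{T,y} V$, says exactly that the projection $V \twoheadrightarrow \varepsilon_T V$ restricts on $V^{G_x^+}$ to multiplication by $e_{T,x}$. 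Faithfulness yields $\varepsilon_T e_x^+ = e_{T,x}$. For a general facet $\tau$, pick any vertex $x \in \bar\tau$; since the pro-$p$ radical $G_x^+$ is contained in $G_\tau^+$ we have $e_x^+ e_\tau^+ = e_\tau^+$, and centrality of $\varepsilon_T$ combined with condition (2) of $0$-consistency yields
\[
\varepsilon_T e_\tau^+ = e_\tau^+ \varepsilon_T e_x^+ = e_\tau^+ e_{T,x} = e_{T,\tau}.
\]

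Given the identity, the implication $\Rightarrow$ is immediate: assume $\varepsilon_T \in \ZG_{\o\ZM[\frac 1p]}(G)$, so that the component $\varepsilon_T e_\tau^+$ lies in $e_\tau^+ \HC_{\o\ZM[\frac 1p]}(G) e_\tau^+$; by the identity this equals $e_{T,\tau}$, which has support in $G_\tau$, and therefore lies in $\HC_{\o\ZM[\frac 1p]}(G_\tau)$. For the converse, assume each $e_{T,\tau} \in \HC_{\o\ZM[\frac 1p]}(G_\tau)$. Given a smooth $V \in \Rep_{\o\ZM[\frac 1p]}(G)$ and $v \in V$, smoothness provides $\tau$ with $v \in V^{G_\tau^+}$; then $\varepsilon_T v = \varepsilon_T e_\tau^+ v = e_{T,\tau} v$ lies in $V$ because $e_{T,\tau} \in \HC_{\o\ZM[\frac 1p]}(G)$ preserves the $\o\ZM[\frac 1p]$-module structure. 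Hence $\varepsilon_T V \subset V$ for every smooth $\o\ZM[\frac 1p] G$-module, i.e., $\varepsilon_T \in \ZG_{\o\ZM[\frac 1p]}(G)$.

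The main obstacle is the vertex identity $\varepsilon_T e_x^+ = e_{T,x}$, which encodes the fact that the idempotent $e_{T,x} \in R[\o G_x]$ correctly detects the direct summand $\Rep^T$ at the level of $G_x^+$-invariants. This is precisely the content recalled from Proposition 2.2.3 of \cite{lanardlblocs}; the remaining passage from vertices to arbitrary facets and the deduction of both implications are formal consequences of centrality of $\varepsilon_T$ and the $0$-consistency of the system $(e_{T,\tau})$.
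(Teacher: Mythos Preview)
Your derivation of the identity $\varepsilon_T e_\tau^+ = e_{T,\tau}$ and the implication $\Rightarrow$ are essentially what the paper does (the paper simply cites the proof of \cite[Lemma 4.1.2]{lanardlblocs} for the identity rather than rederiving it).

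The implication $\Leftarrow$ has a genuine gap. The claim ``smoothness provides $\tau$ with $v \in V^{G_\tau^+}$'' is false: the groups $G_\tau^+$ are \emph{not} cofinal among compact open subgroups of $G$, so a general smooth vector need not be fixed by any $G_\tau^+$. Even if you first project to the depth-$0$ summand (using that $\varepsilon_0\in\ZG_{\o\ZM[\frac 1p]}(G)$), you only get that $v$ is a \emph{finite sum} of vectors each fixed by some $G_{x_i}^+$; you must then check that $\sum_i e_{T,x_i} v_i$ is independent of the chosen decomposition. More fundamentally, the expression ``$\varepsilon_T v$'' has no a priori meaning for $V$ a $\o\ZM[\frac 1p]$-module, since at this point $\varepsilon_T$ is only an element of $\ZG_{\o\ZM[\frac 1N]}(G)$; interpreting it via $V\hookrightarrow V\otimes\o\ZM[\frac 1N]$ requires $V$ to be flat, which it need not be.

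The paper avoids both issues with a categorical argument: if each $e_{T,\tau}$ is $\o\ZM[\frac 1p]$-valued then so is each $e_{T^c,\tau}=e_\tau^+ - e_{T,\tau}$, so the two $0$-consistent systems $(e_{T,\tau})$ and $(e_{T^c,\tau})$ define complementary direct-factor subcategories of $\Rep^0_{\o\ZM[\frac 1p]}(G)$ directly over $\o\ZM[\frac 1p]$. This decomposition of the abelian category produces a central idempotent in $\ZG_{\o\ZM[\frac 1p]}(G)$ which, after base change to $\o\ZM[\frac 1N]$, agrees with $\varepsilon_T$. Your argument can be repaired along the lines you intend (define the endomorphism on a flat projective generator and use the identity to check well-definedness and centrality), but as written it does not go through.
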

\begin{proof}
  As recalled above, the direct factor category
  associated to the idempotent $\varepsilon_{T}$ is given by
  $$\varepsilon_{T}\Rep_{\o\ZM[\frac 1N]}(G) =  \left\{V\in\Rep_{\o\ZM[\frac 1N]}(G),\,
    V=\sum_{x\in \BT_{0}} e_{T,x}V\right\}.$$
  Moreover, its orthogonal complement in $\Rep^{0}_{\o\ZM[\frac 1N]}(G)$ is the category
  similarly associated to the complement subset $T^{c}$ of $T$ in $\TG/\sim$.
  Therefore, if all $e_{T,\tau}$ are
  $\o\ZM[\frac 1p]$-valued, then so are all $e_{T^{c},\tau}=e_{\tau}^{+}-e_{T,\tau}$, so the
  decomposition  $\Rep_{\o\ZM[\frac 1N]}^{0}(G)=\varepsilon_{T}\Rep_{\o\ZM[\frac
        1N]}(G)\times \varepsilon_{T^{c}}\Rep_{\o\ZM[\frac 1N]}(G)$ is defined over $\o\ZM[\frac
      1p]$, hence the idempotents $\varepsilon_{T}$ and $\varepsilon_{T^{c}}$ belong to
  $\ZG_{\o\ZM[\frac 1p]}(G)$.
  Conversely, if $\varepsilon_{T}\in\ZG_{\o\ZM[\frac 1p]}(G)$, then the
  equality  $e_{T,\tau}=\varepsilon_{T}* e_{\tau}^{+}$ in $\HC_{\CM}(G)$ from the proof of Lemma 4.1.2 in
  \cite{lanardlblocs}, shows that $e_{T,\tau}\in\HC_{\o\ZM[\frac 1p]}(G)$.
\end{proof}

Recall the notation $\varepsilon_{0}$ introduced in \ref{sec:depth-0-summand} for the central idempotent corresponding to the
depth $0$ factor.

\begin{Cor} \label{coridemp0}
  We have $\varepsilon_{T}\in\ZG_{\o\ZM[\frac 1p]}(G)$ if and only if $T=\TG/\sim$,
  i.e. if and only if $\varepsilon_{T}=\varepsilon_{0}$.
\end{Cor}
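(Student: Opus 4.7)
My plan is to deduce the corollary from the proposition above via Theorem \ref{primitive}, and then propagate the resulting local constraint across the Bruhat--Tits building using its connectedness.

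First, by the proposition just stated, $\varepsilon_{T}\in\ZG_{\o\ZM[\frac 1p]}(G)$ is equivalent to $e_{T,\tau}\in\HC_{\o\ZM[\frac 1p]}(G_{\tau})$ for every facet $\tau$ of $\BT$. Under the standard identification $e_{\tau}^{+}\HC_{R}(G_{\tau})e_{\tau}^{+}\simeq R[\o G_{\tau}]$ with the group algebra of the finite reductive quotient $\o G_{\tau}=\o\Gf_{\tau}^{\fr}$, this condition amounts to saying that $e_{T,\tau}$ is a central $\o\ZM[\frac 1p]$-valued idempotent of $\o\ZM[\frac 1p][\o G_{\tau}]$. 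Applying Theorem \ref{primitive} to each $\o\Gf_{\tau}$ then forces the only such idempotents to be $0$ and $1$, so $e_{T,\tau}\in\{0,e_{\tau}^{+}\}$ for every facet $\tau$.

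Next, for each facet $\tau$ I would introduce $S_{\tau}\subseteq\TG/\sim$, the set of classes $[\tG]$ such that $e_{[\tG],\tau}\neq 0$; by the definition of the latter, $[\tG]\in S_{\tau}$ exactly when some representative $(\sigma,\pi)\in[\tG]$ satisfies $\tau\subseteq\o\sigma$. Since $\{e_{[\tG],\tau}\}_{[\tG]\in S_{\tau}}$ is an orthogonal decomposition of $e_{\tau}^{+}$ in $\o\ZM[\frac 1N][\o G_{\tau}]$, the condition $e_{T,\tau}\in\{0,e_{\tau}^{+}\}$ translates to $T\cap S_{\tau}\in\{\varnothing,S_{\tau}\}$. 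Two elementary structural facts will then be used: (a) if $\tau$ is a face of $\sigma$ then $S_{\sigma}\subseteq S_{\tau}$, by transitivity of closure containment; and (b) every $S_{\tau}$ is nonempty, since the finite reductive group $\o G_{\tau}$ always admits a cuspidal irreducible representation $\pi$, providing the pair $(\tau,\pi)\in S_{\tau}$.

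To conclude, I would suppose $T\neq\varnothing$ and choose any $[\tG]\in T$ represented by $(\sigma,\pi)$. For any vertex $x\in\o\sigma$ we have $[\tG]\in S_{x}$ by (a), hence $T\cap S_{x}\neq\varnothing$, which forces $S_{x}\subseteq T$. For any edge $e\subset\BT$ with endpoints $x,x'$, fact (a) gives $S_{e}\subseteq S_{x}\cap S_{x'}$ and fact (b) ensures $S_{e}\neq\varnothing$, so $S_{x}\subseteq T$ implies $S_{x'}\cap T\neq\varnothing$ and hence $S_{x'}\subseteq T$ as well. The Bruhat--Tits building being connected (and contractible on its semisimple part), iterating along paths of edges yields $S_{y}\subseteq T$ for every vertex $y$. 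Since every class in $\TG/\sim$ is witnessed by a pair $(\sigma,\pi)$ whose $\sigma$ contains at least one vertex in its closure, $\TG/\sim=\bigcup_{y\in\BT_{0}}S_{y}$, whence $T=\TG/\sim$.

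The main obstacle in this plan is really the first step, which is where all the content lies: collapsing the central $\o\ZM[\frac 1p]$-valued idempotents of every parahoric reductive quotient down to $\{0,1\}$ via Theorem \ref{primitive}. The subsequent propagation on the building is a clean combinatorial argument using only connectedness and the nonemptiness of the $S_{\tau}$.
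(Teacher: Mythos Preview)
Your proposal is correct and shares the paper's core strategy: use the proposition to reduce to $e_{T,\tau}\in \o\ZM[\frac 1p][\o G_{\tau}]$, apply Theorem \ref{primitive} at each facet to force $e_{T,\tau}\in\{0,e_{\tau}^{+}\}$, and then propagate this dichotomy across $\BT$.

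The only difference lies in the propagation step. You introduce the sets $S_{\tau}$ and walk along edges, using connectedness of the building. The paper instead passes from a single vertex $x$ with $e_{T,x}=e_{x}^{+}$ to an incident chamber $\sigma$ via the consistency relation $e_{T,\sigma}=e_{\sigma}^{+}e_{T,x}=e_{\sigma}^{+}$, then uses $G$-equivariance of the system together with transitivity of $G$ on chambers to get $e_{T,\sigma}=e_{\sigma}^{+}$ for \emph{all} chambers at once; since every facet is a face of some chamber, the same consistency relation forces $e_{T,\tau}=e_{\tau}^{+}$ everywhere. This group-theoretic shortcut is a bit slicker and avoids the auxiliary sets $S_{\tau}$, but your edge-path argument is equally valid (and your justification of $S_{\tau}\neq\varnothing$ can be simplified: the $e_{[\tG],\tau}$ sum to $e_{\tau}^{+}\neq 0$, so no appeal to existence of cuspidals is needed).
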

\begin{proof} Suppose $\varepsilon_{T}\in\ZG_{\o\ZM[\frac 1p]}(G)$. By the last proposition,
  for any facet $\tau$ in $\BT_{\bullet}$, the idempotent $e_{T,\tau}$ is inflated from a central
  idempotent of  $\o\ZM[\frac 1p]  \o G_{\tau}$. By Theorem \ref{primitive}, there is
  no non-trivial central idempotent in $\o\ZM[\frac 1p] \o G_{\tau}$. So $e_{T,\tau}$ is
  either  $e_{\tau}^{+}$ or $0$. There is  certainly a vertex $x$ such that $e_{T,x}\neq
    0$, and thus $e_{T,x}=e_{x}^{+}$. Then, for a chamber $\sigma$ containing $x$, we have
  $e_{T,\sigma}=e_{\sigma}^{+}e_{T,x}=e_{\sigma}^{+}$. By $G$-equivariance it follows that
  $e_{T,\tau}=e_{\tau}^{+}$ for all chambers, which implies $e_{T,\tau}\neq 0$ and
  therefore $e_{T,\tau}=e_{\tau}^{+}$ for all facets. Hence
  $\varepsilon_{T}=\varepsilon_{0}$ and $T=\TG/\sim$.
\end{proof}

When $G$ is semi-simple and simply-connected, we have $G_{\tau}=G_{\tau}^{1}$ for all facets $\tau$
so, from our discussion of depth $0$ types above, each $\Rep^{[\tG]}_{\CM}(G)$
is already a block of $\Rep^{0}_{\CM}(G)$ and, therefore, any idempotent of
$\varepsilon_{0}\ZG_{\CM}(G)$ is equal to some $\varepsilon_{T}$ for $T\subset
  \TG/\sim$. So we have proved :

\begin{Cor}
  If $G$ is semisimple and simply-connected, $\rep[\o\ZM[1/p]][0]{G}$ is a block
  (equivalently, $\varepsilon_{0}$ is primitive in $\ZG_{\o\ZM[\frac 1 p]}(G)$).
\end{Cor}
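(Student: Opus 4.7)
The proof should be a direct synthesis of the results immediately preceding the statement. The plan runs in two short steps.

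First, I would exploit the semisimple simply-connected hypothesis together with the classification of depth $0$ types from Subsection~3.2: since $G_{\tau}=G_{\tau}^{\dagger}$ for every facet $\tau\subset\BT$, Morris's theorem identifies each factor $\Rep^{[\tG]}_{\CM}(G)$ as a single Bernstein block. Hence the blocks of $\Rep^{0}_{\CM}(G)$ are indexed directly by $\TG/{\sim}$. Because the Bernstein center of an individual block has no non-trivial idempotents (its spectrum is connected), every idempotent of $\varepsilon_{0}\ZG_{\CM}(G)$ takes the form $\varepsilon_{T}=\sum_{[\tG]\in T}\varepsilon_{[\tG]}$ for some $T\subseteq\TG/{\sim}$.

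Second, let $e\in\varepsilon_{0}\ZG_{\o\ZM[\frac 1p]}(G)$ be a non-zero idempotent. Under the natural map $\ZG_{\o\ZM[\frac 1p]}(G)\to\ZG_{\CM}(G)$ induced by base change, $e$ remains a non-zero idempotent below $\varepsilon_{0}$, so by the previous step $e=\varepsilon_{T}$ for some $T\subseteq\TG/{\sim}$. But then Corollary~\ref{coridemp0} forces $T=\TG/{\sim}$, i.e.\ $e=\varepsilon_{0}$. Thus $\varepsilon_{0}$ admits no decomposition into two non-zero orthogonal idempotents inside $\ZG_{\o\ZM[\frac 1p]}(G)$, which is exactly the primitivity statement; equivalently, $\Rep^{0}_{\o\ZM[\frac 1p]}(G)$ is indecomposable.

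There is no genuine new obstacle here: the substantive work has already been carried out in Corollary~\ref{coridemp0}, whose proof rests on Theorem~\ref{primitive} applied at each reductive quotient $\o G_{\tau}$ together with the consistent-system formalism from \cite{lanardlblocs}. The role of the simply-connected hypothesis in the present corollary is purely structural: it guarantees that $\TG/{\sim}$ already labels the Bernstein blocks, so that the finer splitting coming from characters of $G_{\tau}^{\dagger}/G_{\tau}$ does not appear. Without that assumption, some $\Rep^{[\tG]}_{\CM}(G)$ decomposes further and $\varepsilon_{0}$ need not remain primitive over $\o\ZM[\frac 1p]$, which is exactly the phenomenon analyzed in the remainder of the paper.
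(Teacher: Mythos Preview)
Your proposal is correct and follows essentially the same route as the paper: the authors likewise note that $G_{\tau}=G_{\tau}^{\dagger}$ forces each $\Rep^{[\tG]}_{\CM}(G)$ to be a single Bernstein block, so that every idempotent of $\varepsilon_{0}\ZG_{\CM}(G)$ has the form $\varepsilon_{T}$, and then Corollary~\ref{coridemp0} finishes. Your explicit mention of the connectedness of the spectrum of each block and of the base-change map is a slight elaboration, but the logic is identical.
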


\begin{Rem} \label{remblocksequences}
  As in the case of finite groups, this kind of results can be interpreted at the level of
  irreducible $\o\QM G$-modules in the following way.
  For $M\in\NM^{*}$ and $\pi\in \Irr_{\o\QM}(G)$, denote by
  $\varepsilon_{M,\pi}$ the unique  primitive idempotent of
  $\varepsilon_{0}\ZG_{\o\ZM[\frac 1M]}(G)$  that acts as identity on $\pi$.  So, if $M=N$
  (defined as above), $\varepsilon_{N,\pi}$ is also primitive in $\ZG_{\o\QM}(G)$ and
  defines the Bernstein component that contains $\pi$. Let $\ell\neq p$ be a prime and
  denote by $N_{\ell'}$ the prime-to-$\ell$ part of $N$. Then, for
  $\pi,\pi'\in\Irr_{\o\QM}(G)$, the following properties are equivalent :
  \begin{enumerate}
    \item $\varepsilon_{N_{\ell'},\pi}=\varepsilon_{N_{\ell'},\pi'}$
    \item For any embedding $\o\QM\injo\o\QM_{\ell}$,  the base changed representations
          $\pi,\pi'$ belong to the same block of $\Rep_{\o\ZM_{\ell}}(G)$.
  \end{enumerate}
  This justifies calling the equivalence classes for the relation
  $\pi\sim_{\ell}\pi'\Leftrightarrow \varepsilon_{N_{\ell'},\pi}=\varepsilon_{N_{\ell'},\pi'}$ the \emph{$\ell$-blocks of
    $\Irr_{\o\QM}(G)$}. They correspond to minimal ``$\ell$-integral subsets'' of the set
  of Bernstein blocks. Similarly, the blocks of $\Rep_{\o\ZM[\frac 1p]}(G)$ correspond to
  minimal subsets of the set of Bernstein blocks that are $\ell$-integral for all $\ell\neq
    p$.
  In other words, the equivalence relation $\sim$ generated on $\Irr_{\o\QM}(G)$ by all
  $\sim_{\ell}$, $\ell\neq p$ satisfies $\pi\sim\pi'\Leftrightarrow$ $\pi$ and $\pi'$
  belong to the same block of $\Rep^{0}_{\o\ZM[\frac 1p]}(G)$.

\end{Rem}

Now, to tackle the general case, we need to recall some facts about the quotients $G_{\tau}^{1}/G_{\tau}$.

\subsection{The Kottwitz map}
We first recall a definition of Borovoi :
$$\pi_{1}(\Gf) := X_{*}(\Tf)/\langle\Phi^{\vee}\rangle = {\rm coker}(X_{*}(\Tf_{\rm sc})\To{}X_{*}(\Tf)).$$
Here $\Tf$ is a maximal torus of $\Gf$,  $\Phi^{\vee}\subset X_{*}(\Tf)$
is the set of (absolute) coroots, and $\Tf_{\rm sc}$ is the inverse image of $\Tf$ in the
simply connected covering $\Gf_{\rm sc}$ of the derived group $\Gf_{\rm der}$ of
$\Gf$. Using the fact that all tori are conjugate, the group $\pi_{1}(\Gf)$ turns out to be canonically
independent of the choice of $\Tf$. Moreover, if $\Tf$ is chosen so as to be defined over
$F$, then $\pi_{1}(\Gf)$ gets a $\ZM$-linear action of the Galois group $\Gamma_{F}={\rm Gal}(\o
  F/F)$. Again, this action does not depend on the choice of $F$-rational torus $\Tf$
(although two such choices may not be $\Gf(F)$-conjugate).

Now, let $I_{F}\subset \Gamma_{F}$ denote the inertia subgroup and let $\fr$ denote the
geometric Frobenius in $\Gamma_{F}/I_{F}$. Kottwitz has defined a surjective morphism
$$ \kappa_{G}:\,\, G=\Gf(F)\To{} \pi_{1}(G):=\left(\pi_{1}(\Gf)_{I_{F}}\right)^{\fr}.$$
We refer to \cite[Chap. 11]{kaletha_prasad} for the detailed construction of this map.
The following properties of this map are particularly relevant to our problem :
\begin{itemize}
  \item The kernel $G^{0}:=\ker \kappa_{G}$ is the subgroup of $G$ generated by parahoric
        subgroups and, for any facet $\sigma\in \BT_{\bullet}$, the parahoric group $G_{\sigma}$ is the
        pointwise stabilizer of $\sigma$ in $G^{0}$ (and actually also
        the stabilizer).
  \item The inverse image $G^{1}:=\kappa_{G}^{-1}(\pi_{1}(G)_{\rm tors})$ is the subgroup of
        $G$ generated by compact subgroups and, for any facet $\sigma\in \BT_{\bullet}$, the compact
        open group   $G_{\sigma}^{1}$ introduced above 
        is the pointwise stabilizer of $\sigma$ in $G^{1}$.
\end{itemize}
In particular, we have $G_{\sigma}^{1}\subset G^{1}$ and $G_{\sigma}=G^{0}\cap G_{\sigma}^{1}$.

Now let $\Psi_{G}$ be the diagonalizable algebraic group scheme over
$\o\ZM[\frac 1p]$ associated to
the finitely generated abelian group $\pi_{1}(G)$.
Its maximal subtorus $\Psi_{G}^{t}$ is the usual ``torus of unramified characters''
of $G$, while the quotient
$\Psi_{G}^{f}:=\Psi_{G}/\Psi_{G}^{t}$ is the diagonalizable group
scheme associated  to the finite group $\pi_{1}(G)_{\rm tors}$.

For any $\o\ZM[\frac 1p]$-algebra $R$, the group $\Psi_{G}(R)=\Hom(\pi_{1}(G),R^{\times})$
identifies via $\kappa_{G}$ to a group of $R$-valued characters of $G$, hence it
acts on the category $\Rep_{R}(G)$ by twisting the representations. Since this action is
$R$-linear, this induces in turn  an action
of $\Psi_{G}(R)$ by automorphisms of $R$-algebra on $\ZG_{R}(G)$, hence an action on the
set ${\rm Idemp}(\ZG_{R}(G))$ of idempotents of $\ZG_{R}(G)$.

The idempotents of $\ZG_{\CM}(G)$ are known to be
supported on the set of compact elements \cite[Cor. 2.11]{idemp}, hence in particular on
$G^{1}$, so the action of $\Psi_{G}(\CM)$ on  ${\rm Idemp}(\ZG_{\CM}(G))$ factors through an action of
$\Psi^{f}_{G}(\CM)=\Hom(\pi_{1}(G)_{\rm tors},\CM^{\times})=\pi_{0}(\Psi_{G,\CM})$.
Further, the depth $0$ projector $\varepsilon_{0}$ is known to be supported on the set of
topologically unipotent elements \cite[Cor 1.9 b)]{BKV}, hence in particular on $G^{0}$, so $\varepsilon_{0}$ is
invariant under the action of $\Psi_G^{f}(\CM)$.

\begin{Lem}
  \label{lemactionpsif}
  For each $[\tG]\in\TG/\sim$, the associated  idempotent $\varepsilon_{[\tG]}$ in
  $\ZG_{\CM}(G)$ is invariant by $\Psi_G^{f}(\CM)$. Moreover, the primitive idempotents that
  refine $\varepsilon_{[\tG]}$ form a single $\Psi_G^{f}(\CM)$-orbit.
\end{Lem}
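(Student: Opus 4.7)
The plan is to treat the two assertions separately, using in both cases the concrete descriptions of $\Rep^{[\tG]}_{\CM}(G)$ and of its refinement into blocks recalled just before the lemma. For invariance, I would fix a representative $(\sigma,\pi)\in[\tG]$ and a character $\chi\in\Psi_{G}^{f}(\CM)$. By construction, $\chi$ factors through Kottwitz as a character of $G/G^{0}$, and therefore restricts trivially to $G_\sigma\subset G^{0}$. The projection formula then yields
\[
  \mathrm{ind}_{G_\sigma}^{G}(\pi)\otimes\chi \;\cong\; \mathrm{ind}_{G_\sigma}^{G}\bigl(\pi\otimes\chi|_{G_\sigma}\bigr) \;=\; \mathrm{ind}_{G_\sigma}^{G}(\pi),
\]
so twisting by $\chi$ preserves the set of projective generators of $\Rep^{[\tG]}_{\CM}(G)$, hence preserves the summand and fixes $\varepsilon_{[\tG]}$.

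For transitivity, recall that the blocks inside $\Rep^{[\tG]}_{\CM}(G)$ have projective generators $\mathrm{ind}_{G_\sigma^\dagger}^{G}(\pi^\dagger)$ with $\pi^\dagger$ an irreducible subquotient of $\mathrm{ind}_{G_\sigma}^{G_\sigma^\dagger}(\pi)$, and that any two such $\pi^\dagger$ differ by twisting by a character $\psi$ of the finite abelian group $G_\sigma^\dagger/G_\sigma$. I would then exploit the recalled identities $G_\sigma=G^{0}\cap G_\sigma^\dagger$ and $G_\sigma^\dagger\subset G^{1}$, combined with the surjectivity of Kottwitz, to build an injection
\[
  G_\sigma^\dagger/G_\sigma \;\hookrightarrow\; G^{1}/G^{0} \;=\; \pi_{1}(G)_{\rm tors}.
\]
Pontryagin duality for finite abelian groups then yields surjectivity of the restriction map $\Psi_{G}^{f}(\CM)=\Hom(\pi_{1}(G)_{\rm tors},\CM^{\times})\twoheadrightarrow\Hom(G_\sigma^\dagger/G_\sigma,\CM^{\times})$, so each $\psi$ lifts to some $\chi\in\Psi_{G}^{f}(\CM)$ with $\chi|_{G_\sigma^\dagger}=\psi$. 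Applying the projection formula again gives $\mathrm{ind}_{G_\sigma^\dagger}^{G}(\pi^\dagger)\otimes\chi\cong\mathrm{ind}_{G_\sigma^\dagger}^{G}(\pi^\dagger\otimes\psi)$, which is the projective generator of the neighbouring block. This shows that the $\Psi_{G}^{f}(\CM)$-action on the primitive idempotents refining $\varepsilon_{[\tG]}$ is transitive.

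The only real substance in this plan is the correct extraction of the embedding $G_\sigma^\dagger/G_\sigma\hookrightarrow\pi_{1}(G)_{\rm tors}$ from Kottwitz's setup; once that bookkeeping is done, both assertions reduce to the projection formula on the representation side and Pontryagin duality on the character side, so I do not anticipate any serious obstacle.
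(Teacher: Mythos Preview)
Your proposal is correct and follows essentially the same line as the paper's proof: both parts rest on the projection formula applied to ${\rm ind}_{G_\sigma}^G(\pi)$ and ${\rm ind}_{G_\sigma^\dagger}^G(\pi^\dagger)$, together with the embedding $G_\sigma^\dagger/G_\sigma\hookrightarrow\pi_1(G)_{\rm tors}$ via $\kappa_G$ and the resulting surjectivity on character groups. The only superfluous ingredient is your invocation of the surjectivity of $\kappa_G$---for the embedding you only need $G_\sigma=G^0\cap G_\sigma^\dagger$ and $G^0=\ker\kappa_G$---but this does no harm.
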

\begin{proof}
  Pick $(\sigma,\pi)\in[\tG]$. We know that the direct factor category
  $\Rep^{[\tG]}_{\CM}(G)$ is generated by the projective object
  ${\rm ind}_{G_{\sigma}}^{G}(\pi)$. For any $\psi\in \Psi_{G}(\CM)$, we have
  ${\rm ind}_{G_{\sigma}}^{G}(\pi)\otimes \psi = {\rm
    ind}_{G_{\sigma}}^{G}(\pi\otimes\psi_{|G_{\sigma}})={\rm
    ind}_{G_{\sigma}}^{G}(\pi)$. It follows that $\Rep^{[\tG]}_{\CM}(G)$ is stable under
  the action of $\Psi_{G}(\CM)$, hence $\varepsilon_{[\tG]}$ is invariant.

  Now, we also know that any block of $\Rep^{[\tG]}_{\CM}(G)$ is generated by a
  projective object of the form   ${\rm ind}_{G_{\sigma}^{1}}^{G}(\pi^{1})$
  where $\pi^{1}$ is an irreducible constituent of ${\rm
        ind}_{G_{\sigma}}^{G_{\sigma}^{1}}(\pi)$. But any two such irreducible
  constituents are twists of one another by a character of
  $G_{\sigma}^{1}/G_{\sigma}$. Moreover, the latter group embeds in $\pi_{1}(G)_{\rm tors}$ via
  $\kappa_{G}$, hence the restriction map $\Psi_G^{f}(\CM)\To{}
    \Hom(G_{\sigma}^{1}/G_{\sigma},\CM^{\times})$ is surjective, and the second
  statement of the lemma follows.
\end{proof}

This lemma implies that the only central idempotents of depth $0$ in $\ZG_{\CM}(G)$ that
are invariant by $\Psi_G^{f}(\CM)$ are the $\varepsilon_{T}$ for $T\subset
  \TG/\sim$. Now, observe that the action of $\Psi_{G}(\o\ZM[\frac 1p])$
on ${\rm Idemp}(\ZG_{\o\ZM[\frac 1p]}(G))$ factors over
$\Psi^{f}_{G}(\o\ZM[\frac 1p])$, which is equal to
$\Psi^{f}_{G}(\CM)$. In other words, $\Psi^{f}_{G}(\CM)$ preserves the
subset of idempotents of $\ZG_{\CM}(G)$ that belong to $\ZG_{\o\ZM[\frac 1p]}(G)$.
Therefore, we can restate Corollary \ref{coridemp0} as follows :

\begin{Cor}\label{corpsiinvariant}
  The only $\Psi_G^{f}(\o\ZM[\frac 1p])$-invariant idempotent of $\varepsilon_{0}\ZG_{\o\ZM[\frac 1p]}(G)$
  is $\varepsilon_{0}$. Hence $\Psi_G^{f}(\o\ZM[\frac 1p])$ acts transitively
  on the set of primitive idempotents of $\varepsilon_{0}\ZG_{\o\ZM[\frac 1p]}(G)$.
\end{Cor}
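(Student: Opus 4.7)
\emph{Plan.} My plan is to derive this corollary directly from the preceding Lemma together with Corollary \ref{coridemp0}, exploiting the observation recalled in the paragraph just before the statement that the action of $\Psi_{G}(\o\ZM[\frac 1p])$ on ${\rm Idemp}(\ZG_{\o\ZM[\frac 1p]}(G))$ factors through $\Psi_{G}^{f}(\o\ZM[\frac 1p])$, and that $\Psi_{G}^{f}(\o\ZM[\frac 1p])=\Psi_{G}^{f}(\CM)$. Everything substantial has already been done; what remains is a packaging argument.

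\emph{First step: invariance forces $\varepsilon_{0}$.} Let $e\in\varepsilon_{0}\ZG_{\o\ZM[\frac 1p]}(G)$ be a nonzero $\Psi_{G}^{f}(\o\ZM[\frac 1p])$-invariant idempotent. Fix an embedding $\o\ZM[\frac 1p]\injo\CM$ and view $e$ inside $\varepsilon_{0}\ZG_{\CM}(G)$. It becomes a nonzero depth $0$ idempotent, still invariant under $\Psi_{G}^{f}(\CM)$ thanks to the identification $\Psi_{G}^{f}(\o\ZM[\frac 1p])=\Psi_{G}^{f}(\CM)$. The preceding Lemma shows that any such invariant idempotent has the form $\varepsilon_{T}$ for some $T\subset\TG/\sim$, so $e=\varepsilon_{T}$. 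Since $e\in\ZG_{\o\ZM[\frac 1p]}(G)$, Corollary \ref{coridemp0} then forces $T=\TG/\sim$, whence $e=\varepsilon_{0}$.

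\emph{Second step: transitivity.} The group $\Psi_{G}^{f}(\o\ZM[\frac 1p])$ acts by $\o\ZM[\frac 1p]$-algebra automorphisms on $\varepsilon_{0}\ZG_{\o\ZM[\frac 1p]}(G)$, hence permutes the finite set $\mathcal{I}$ of its primitive idempotents. For any orbit $O\subset\mathcal{I}$, the sum $e_{O}:=\sum_{e\in O}e$ is a nonzero $\Psi_{G}^{f}(\o\ZM[\frac 1p])$-invariant idempotent; by the first step it must equal $\varepsilon_{0}=\sum_{e\in\mathcal{I}}e$. Hence $O=\mathcal{I}$, proving that the action on $\mathcal{I}$ is transitive.

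\emph{Main obstacle.} There is essentially none at this stage: the two deep inputs—namely Theorem \ref{primitive} (used through Corollary \ref{coridemp0}) and the preceding Lemma describing $\Psi_{G}^{f}(\CM)$-invariant depth $0$ idempotents—have already been proved. The only mildly delicate point to keep in mind is that the equality $\Psi_{G}^{f}(\o\ZM[\frac 1p])=\Psi_{G}^{f}(\CM)$ (which is what allows us to transport information from the complex Bernstein center back to $\o\ZM[\frac 1p]$) relies on $\pi_{1}(G)_{\rm tors}$ being supported at primes different from $p$, and this is the reason the integrality constraint in Corollary \ref{coridemp0} interacts cleanly with $\Psi_{G}^{f}$-invariance.
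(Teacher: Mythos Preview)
Your proof is correct and follows essentially the same route as the paper, which gives the argument in the paragraph immediately preceding the corollary (invariant depth~$0$ idempotents over $\CM$ are the $\varepsilon_{T}$'s, then apply Corollary~\ref{coridemp0}); you simply spell out the transitivity step in more detail. One minor inaccuracy in your closing remark: the identification $\Psi_{G}^{f}(\o\ZM[\frac 1p])=\Psi_{G}^{f}(\CM)$ does not rely on $\pi_{1}(G)_{\rm tors}$ avoiding $p$-torsion, but merely on the fact that all roots of unity already lie in $\o\ZM\subset\o\ZM[\frac 1p]$.
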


In order to better understand the action of $\Psi_G^{f}(\o\ZM[\frac 1p])$ on ${\rm
      Idemp}(\varepsilon_{0}\ZG_{\o\ZM[\frac 1p]}(G))$, write this group as a product
$\Psi_G^{f}(\o\ZM[\frac 1p])=\Psi_G^{f}(\o\ZM[\frac 1p])_{p}\times\Psi_G^{f}(\o\ZM[\frac 1p])_{p'}$ of a $p$-group and
a $p'$-group.

\begin{Lem} \label{lemprimetop}
  Any idempotent of $\ZG_{\o\ZM[\frac 1p]}(G)$ is invariant under $\Psi_G^{f}(\o\ZM[\frac 1p])_{p'}$.
\end{Lem}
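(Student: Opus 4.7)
The plan is to reduce to characters of $\ell$-power order for varying primes $\ell\neq p$, and to exploit the fact that such a character becomes trivial modulo $\ell$ and therefore preserves each $\ell$-block.

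First, I would canonically decompose any $\psi\in\Psi_G^{f}(\o\ZM[\frac 1p])_{p'}$ as a finite product $\psi=\prod_{\ell\neq p}\psi_{\ell}$, where $\psi_{\ell}$ has order a power of $\ell$ (using the Sylow decomposition of the finite abelian $p'$-group underlying $\Psi_G^{f}(\o\ZM[\frac 1p])_{p'}$). Since $\Psi_G^{f}(\o\ZM[\frac 1p])$ acts by $\o\ZM[\frac 1p]$-algebra automorphisms on $\ZG_{\o\ZM[\frac 1p]}(G)$, it is enough to show that each such $\psi_{\ell}$ fixes every idempotent.

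Next, I would use the same logic as in Remark~\ref{remblocksequences} to interpret any idempotent $e\in\ZG_{\o\ZM[\frac 1p]}(G)$ as a union $S$ of Bernstein blocks of $\Irr_{\o\QM}(G)$ which, crucially, must be a union of $\ell'$-blocks of $\Rep_{\o\ZM_{\ell'}}(G)$ for every $\ell'\neq p$. Under the twist action, $e$ is sent to the idempotent supported on $S\otimes\psi:=\{\pi\otimes(\psi\circ\kappa_{G}),\,\pi\in S\}$, so the invariance $\psi\cdot e=e$ amounts to $S=S\otimes\psi$.

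I would then prove the sharper statement that, for $\psi=\psi_{\ell}$ of $\ell$-power order, twisting by $\psi$ preserves each $\ell$-block of $\Rep_{\o\ZM_{\ell}}(G)$; since $S$ is already a union of $\ell$-blocks this immediately gives $S\otimes\psi\subseteq S$, hence equality by applying the same argument to $\psi^{-1}$. The preservation of $\ell$-blocks comes from the fact that, upon fixing an embedding $\o\ZM[\frac 1p]\hookrightarrow\o\ZM_{\ell}$, the values of $\psi_{\ell}$ all lie in $1+\mG$ for $\mG$ the maximal ideal of $\o\ZM_{\ell}$; consequently the twist-by-$\psi_{\ell}$ endofunctor of $\Rep_{\o\ZM_{\ell}}(G)$ reduces to the identity after extension of scalars to $\o\FM_{\ell}$, and a block-by-block Hensel-style lifting of central idempotents from $\ZG_{\o\FM_{\ell}}(G)$ to $\ZG_{\o\ZM_{\ell}}(G)$ upgrades this trivial action to the identity on the set of $\ell$-blocks.

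The only genuinely non-formal step I anticipate is this last Hensel lifting, which must be carried out inside each individual Bernstein block (whose center is a finitely generated $\o\ZM_{\ell}$-algebra) rather than on the full, non-local, Bernstein center. This kind of block-by-block reasoning is already implicit in the paper's earlier use of idempotent lifting in the finite-group setting, so I would expect it to be routine in context.
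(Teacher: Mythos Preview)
Your approach is correct but takes a longer route than the paper's. Both proofs share the same opening move---reducing to a character $\psi$ of $\ell$-power order for a single prime $\ell\neq p$---and the same key observation that such a $\psi$ becomes trivial upon reduction modulo $\ell$. The difference lies in how this observation is leveraged.

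You go through the structural correspondence between idempotents and unions of $\ell$-blocks: interpret $e$ as a union of Bernstein blocks that is $\ell$-integral for every $\ell$, then argue that twisting by $\psi$ preserves each $\ell$-block via a Hensel-type lifting from $\ZG_{\o\FM_{\ell}}(G)$ to $\ZG_{\o\ZM_{\ell}}(G)$. This works, but the lifting step and the block correspondence (which in Remark~\ref{remblocksequences} is only stated for depth~$0$, whereas the lemma concerns arbitrary idempotents) both need some care to set up.

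The paper bypasses all of this with a direct argument at the level of objects. Assuming $\varepsilon$ primitive, it suffices to exhibit a single non-zero object $V$ in $\varepsilon\Rep_{\o\ZM[\frac 1p]}(G)$ with $V\otimes\psi\simeq V$: such a $V$ then lies in both $\varepsilon\Rep$ and $(\psi\cdot\varepsilon)\Rep$, forcing the two primitive idempotents to coincide. For $V$ one takes the reduction of $\varepsilon\cdot{\rm ind}_{H}^{G}(\o\ZM[\frac 1p])$ (for a suitable open pro-$p$ subgroup $H$) modulo a maximal ideal containing $\ell$; this is non-zero by projectivity, and $\psi$ acts trivially on it since its values land in $\o\FM_{\ell}$. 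In effect, the paper's argument is the core of your Hensel step, extracted and applied directly to the original idempotent over $\o\ZM[\frac 1p]$ without ever passing to $\o\ZM_{\ell}$ or invoking the block decomposition.
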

\begin{proof}
  Let $\varepsilon$ be an idempotent of $\ZG_{\o\ZM[\frac 1p]}(G)$ and assume, without
  loss of generality, that it is primitive. On the other hand, let $\psi\in
    \Psi_G^{f}(\o\ZM[\frac 1p])_{p'}$ and assume, without loss of generality that $\psi$ has order
  a power of some prime $\ell\neq p$. In order to prove that
  $\psi\cdot\varepsilon=\varepsilon$, it suffices to find a non-zero object $(V,\pi)$ in
  $\varepsilon\Rep_{\o\ZM[\frac 1p]}(G)$ such that $\pi\otimes\psi\simeq \pi$.  Since the
  composition of $\psi : \pi_{1}(G)_{\rm tors}\To{}\o\ZM[\frac 1p]^{\times}$ with any
  morphism $\o\ZM[\frac 1p] \To{}\o\FM_{\ell}$ is trivial, it suffices to find a non-zero
  object $(V,\pi)$ in $\varepsilon\Rep_{\o\ZM[\frac 1p]}(G)$
  whose $\o\ZM[\frac 1p]$-module structure factors
  over a morphism $\o\ZM[\frac 1p]\To{}\o\FM_{\ell}$.  But $\varepsilon\Rep_{\o\ZM[\frac
        1p]}(G)$ certainly  contains a
  representation of the form
  $\varepsilon.{\rm ind}_{H}^{G}(\o\ZM[\frac 1p])$ for some open
  pro-$p$-subgroup $H$ of $G$. Such a representation being
  projective as a $\o\ZM[\frac 1p]$-module, its reduction modulo any maximal ideal
  containing $\ell$ is non-zero and satisfies the desired property.
\end{proof}

In the next statement, we say that a torus $\Tf$ defined over $F$ is
\emph{$P_{F}$-induced} if  the action of the wild inertia subgroup
$P_{F}$ permutes a basis of $X_{*}(\Tf)$.

\begin{Cor}\label{cornqs}
  Suppose that $p$ does not divide $ |\pi_{1}(\Gf_{\rm der})|$ and that the torus $\Gf_{\rm ab}$ is
  $P_{F}$-induced. Then $\varepsilon_{0}$
  is a primitive idempotent of $\ZG_{\o\ZM[\frac 1p]}(G)$.
\end{Cor}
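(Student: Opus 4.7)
The plan is to combine Corollary \ref{corpsiinvariant} with Lemma \ref{lemprimetop}, which together reduce the corollary to a statement about the $p$-torsion of $\pi_{1}(G)_{\rm tors}$. Indeed, by Corollary \ref{corpsiinvariant}, $\Psi_{G}^{f}(\o\ZM[\frac 1p])$ acts transitively on the primitive idempotents of $\varepsilon_{0}\ZG_{\o\ZM[\frac 1p]}(G)$; by Lemma \ref{lemprimetop}, its prime-to-$p$ part acts trivially; so primitivity of $\varepsilon_{0}$ is equivalent to the triviality of the $p$-part of $\Psi_{G}^{f}(\o\ZM[\frac 1p]) = \Hom(\pi_{1}(G)_{\rm tors}, \o\ZM[\frac 1p]^{\times})$. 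Since $\o\ZM[\frac 1p]^{\times}$ contains all roots of unity, this amounts to proving that $\pi_{1}(G)_{\rm tors}$ has no $p$-torsion; and since $\pi_{1}(G) = (\pi_{1}(\Gf)_{I_{F}})^{\fr}$ embeds in $\pi_{1}(\Gf)_{I_{F}}$, it is enough to show the latter has no $p$-torsion.

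For this I would use the Borovoi right exact sequence
\[
\pi_{1}(\Gf_{\rm der})\to \pi_{1}(\Gf)\to X_{*}(\Gf_{\rm ab})\to 0
\]
associated to $1\to\Gf_{\rm der}\to\Gf\to\Gf_{\rm ab}\to 1$ and take $I_{F}$-coinvariants. The image $K'$ of $\pi_{1}(\Gf_{\rm der})_{I_{F}}$ in $\pi_{1}(\Gf)_{I_{F}}$ is a subquotient of $\pi_{1}(\Gf_{\rm der})$, hence of order prime to $p$ by hypothesis, and one is reduced to showing that $X_{*}(\Gf_{\rm ab})_{I_{F}}$ has no $p$-torsion.

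For this final step, write $X := X_{*}(\Gf_{\rm ab})$ and fix a $P_{F}$-stable $\ZM$-basis of $X$, which exists by the $P_{F}$-induced hypothesis. The orbits of $P_{F}$ on this basis are necessarily finite (as $\mathrm{Aut}(X)$ is discrete), so $X_{P_{F}}$ identifies with the free abelian group on the set of $P_{F}$-orbits, generated by the orbit sums; in particular $X_{P_{F}}$ is torsion-free. Since $I_{F}/P_{F}$ is pro-(prime to $p$) and $X_{P_{F}}$ is finitely generated, the residual action factors through a finite cyclic quotient $C$ of order coprime to $p$. After inverting $|C|$, the functors of $C$-invariants and $C$-coinvariants agree (via the norm map), so
\[
X_{I_{F}}\!\left[\tfrac{1}{|C|}\right] \;\simeq\; (X_{P_{F}})^{C}\!\left[\tfrac{1}{|C|}\right]
\]
is torsion-free. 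This forces the torsion of $X_{I_{F}}$ to be $|C|$-primary, hence without $p$-part.

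I do not expect a genuine obstacle: the reductions are formal consequences of the structural results already established, and the final coinvariant computation for permutation modules is elementary. The main point to watch is the bookkeeping of orders, ensuring that the two finite groups that could contribute $p$-torsion—namely $\pi_{1}(\Gf_{\rm der})$ and the tame cyclic quotient acting on $X_{P_{F}}$—have order prime to $p$, which is precisely what the two hypotheses guarantee.
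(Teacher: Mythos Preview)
Your proof is correct and follows essentially the same route as the paper: reduce via Corollary \ref{corpsiinvariant} and Lemma \ref{lemprimetop} to showing that $\pi_1(\Gf)_{I_F}$ has no $p$-torsion, then use the Borovoi exact sequence and the two hypotheses to kill the contributions from $\pi_1(\Gf_{\rm der})$ and from $X_*(\Gf_{\rm ab})_{I_F}$ respectively. One nitpick: ``equivalent'' in your first paragraph should read ``implied by'', since only the sufficient direction is being claimed or needed.
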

\begin{proof}
  From the
  inclusions $X_{*}(\Tf_{\rm sc})\subset X_{*}(\Tf_{\rm der})\subset X_{*}(\Tf)$ and the
  isomorphism $X_{*}(\Tf)/X_{*}(\Tf_{\rm der})\simto X_{*}(\Gf_{\rm ab})$,
  we get an exact sequence
  $\pi_{1}(\Gf_{\rm der})\injo{}\pi_{1}(\Gf)\twoheadrightarrow X_{*}(\Gf_{\rm ab}).$
  Applying $I_{F}$-coinvariants, we get an exact sequence
  $\pi_{1}(\Gf_{\rm der})_{I_{F}}\To{}\pi_{1}(\Gf)_{I_{F}}\twoheadrightarrow X_{*}(\Gf_{\rm ab})_{I_{F}}.$
  Since $\pi_{1}(\Gf_{\rm der})$ is finite, this sequence remains exact on the torsion
  subgroups and the $p$-torsion subgroups, so we get an exact sequence
  $$\pi_{1}(\Gf_{\rm der})_{I_{F}, p-\rm tors}\To{}\pi_{1}(\Gf)_{I_{F}, p-\rm
    tors}\twoheadrightarrow X_{*}(\Gf_{\rm ab})_{I_{F}, p-\rm tors}.$$
  Our first assumption implies that $\pi_{1}(\Gf_{\rm der})_{I_{F}, p-\rm tors}=0$, and our
  second assumption implies that
  $X_{*}(\Gf_{\rm ab})_{I_{F}, p-\rm tors}= (X_{*}(\Gf_{\rm ab})_{P_{F}, \rm
    tors})_{I_{F}}=0$. Here the first equality is because $I_{F}$ acts on
  $P_{F}$-coinvariants through a
  finite quotient of order \emph{prime to} $p$, and the vanishing claim is because
  $X_{*}(\Gf_{\rm ab})_{P_{F}}$ is torsion free since $\Gf_{\rm ab}$ is $P_{F}$-induced.
  Therefore $\pi_{1}(G)_{p-\rm tors}=\{0\}$, thus
  $\Psi_G^{f}(\o\ZM[\frac 1p])_{p}=\{1\}$ and we conclude thanks to Lemma
  \ref{lemprimetop} and Corollary \ref{corpsiinvariant}.
\end{proof}

The following example shows that the condition on $\pi_{1}(\Gf_{\rm der})$ is not
always nece\-ssary for $\varepsilon_{0}$ to be a primitive idempotent.

\begin{Exa} \label{Example_PGL_p}
  Let $\Gf={\rm PGL}_{p}$ with split $F$-structure, so that $\pi_{1}(G)=\ZM/p\ZM$ and
  $\kappa_{G} : G\To{} \pi_{1}(G)$ is induced by the valuation of the determinant. Then any
  irreducible supercuspidal representation of the form $\pi={\rm ind}_{{\rm
        PGL}_{p}(O_{F})}^{G}(\bar\pi)$ is invariant under torsion by the group
  $\Psi_{G}^{f}(\CM)$ of characters of $\pi_{1}(G)$, hence so is the only primitive
  idempotent $\varepsilon$ of $\varepsilon_{0}\ZG_{\o\ZM[\frac 1p]}(G)$ such that
  $\varepsilon\pi\neq 0$, showing that $\varepsilon_{0}=\varepsilon$ is primitive by Corollary~\ref{corpsiinvariant}.
\end{Exa}

The next paragraph generalizes the above observation about ${\rm PGL}_{p}$.

\subsection{Special points}
Let $\mathbf{S}$ be a maximal split torus in $\Gf$ and let $\mathbf{Z}$ be the centralizer
of $\mathbf{S}$ in $\Gf$. This is a Levi component of a minimal
$F$-rational parabolic subgroup of $\Gf$. By \cite[Lemma 11.5.6]{kaletha_prasad}, we
know that the canonical map $\pi_{1}(Z)\To{}\pi_{1}(G)$  is injective on torsion
subgroups. Correspondingly,
the map $\Psi_{G}^{f}(\o\ZM[\frac 1p])\To{} \Psi_{Z}^{f}(\o\ZM[\frac 1p])$ is
surjective. Note that the quotient of $\Psi_{G}^{f}(\o\ZM[\frac 1p])$ thus obtained is independent
of the choice of $\mathbf{S}$ since all maximal split tori are $G$-conjugate.

\begin{Lem} \label{lemmeZ}
  The action of $\Psi_{G}^{f}(\o\ZM[\frac 1p])$ on the set of idempotents of
  $\varepsilon_{0}\ZG_{\o\ZM[\frac 1p]}(G)$ factors over the quotient $\Psi_{Z}^{f}(\o\ZM[\frac 1p])$.
\end{Lem}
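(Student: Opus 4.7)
The plan is to track the action of the kernel $K:=\ker(\Psi_G^f(\o\ZM[\frac 1p])\to\Psi_Z^f(\o\ZM[\frac 1p]))$ through Pontryagin duality and reduce to a statement about the Kottwitz image of facet stabilizers, which is then settled using the existence of a special vertex. Since every idempotent decomposes as a sum of primitives and $\Psi_G^f(\o\ZM[\frac 1p])$ acts by permutation on them, it suffices to show that $K$ fixes every primitive idempotent of $\varepsilon_0\ZG_{\o\ZM[\frac 1p]}(G)$. After base change to $\CM$ (where $\Psi_G^f$ has the same points), such an idempotent refines to a sum of $\CM$-blocks of some $\Rep_{\CM}^{[\tG]}(G)$, and by the proof of the previous lemma each of these has $\Psi_G^f(\CM)$-stabilizer containing $\ker(\Psi_G^f(\CM)\to\Hom(G_\sigma^\dagger/G_\sigma,\CM^\times))$ for any $(\sigma,\pi)\in[\tG]$. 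By Pontryagin duality applied to the injection $G_\sigma^\dagger/G_\sigma\hookrightarrow\pi_1(G)_{\rm tors}$ induced by $\kappa_G$, the lemma therefore reduces to the following claim: for every facet $\sigma\subset\BT$, the image of $G_\sigma^\dagger/G_\sigma$ in $\pi_1(G)_{\rm tors}$ lies inside the image of $\pi_1(Z)_{\rm tors}\to\pi_1(G)_{\rm tors}$.

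Since $\kappa_G$ is invariant under $G$-conjugation, only the conjugacy class of $\sigma$ matters, so I may assume $\sigma$ is contained in the apartment $\AC$ of $\mathbf{S}$. The Bruhat--Tits decomposition of the stabilizer of $\sigma$ inside $G^1$ yields $G_\sigma^\dagger=(N_G(\mathbf{S})\cap G_\sigma^\dagger)\cdot G_\sigma$; since $G_\sigma\subset G^0=\ker\kappa_G$, this gives $\kappa_G(G_\sigma^\dagger)=\kappa_G(N_G(\mathbf{S})\cap G^1)$. It therefore suffices to prove the stronger inclusion $\kappa_G(N_G(\mathbf{S}))\subset\kappa_G(Z)$, the latter being exactly the image of $\pi_1(Z)\to\pi_1(G)$.

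For this last step, I would invoke the exact sequence $1\to Z\to N_G(\mathbf{S})\to W\to 1$, where $W$ is the finite relative Weyl group, and fix a special vertex $x\in\AC$. The defining property of a special vertex is precisely that the natural map $N_G(\mathbf{S})\cap G_x\to W$ is surjective, which furnishes a set of Weyl group representatives lying in $G_x\subset G^0=\ker\kappa_G$. Hence any $n\in N_G(\mathbf{S})$ admits a factorization $n=zw$ with $z\in Z$ and $w\in G^0$, which gives $\kappa_G(n)=\kappa_G(z)\in\kappa_G(Z)$ as required.

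The principal obstacle is bookkeeping rather than genuinely new mathematics: one must extract from the Kaletha--Prasad framework the apartment-level decomposition $G_\sigma^\dagger=(N_G(\mathbf{S})\cap G_\sigma^\dagger)\cdot G_\sigma$ of the facet stabilizer in $G^1$ and the surjection $N_G(\mathbf{S})\cap G_x\twoheadrightarrow W$ at a special vertex, in the possibly non-quasi-split and wildly ramified setting. Once these structural inputs are in place the reduction is essentially formal, and the geometric content of the lemma is just the existence of a special vertex in every apartment, which is what justifies the subsection title.
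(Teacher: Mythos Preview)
Your argument is correct, but it takes a different and somewhat heavier route than the paper's. You aim to show that $K$ fixes \emph{every} Bernstein block over $\CM$, by establishing $\kappa_G(G_\sigma^\dagger)\subset\kappa_G(Z)$ for \emph{all} facets $\sigma$; this requires both the apartment decomposition $G_\sigma^\dagger=(N_G(\mathbf{S})\cap G_\sigma^\dagger)\cdot G_\sigma$ for arbitrary $\sigma$ and the lifting of the relative Weyl group into $G^0$ at a special vertex. The paper instead exploits the transitivity already proved in Corollary~\ref{corpsiinvariant}: it suffices to exhibit a \emph{single} primitive idempotent of $\varepsilon_0\ZG_{\o\ZM[\frac 1p]}(G)$ fixed by $K$, and the one sitting above any block attached to a special vertex $x$ works, since \cite[Prop.~7.7.10]{kaletha_prasad} gives directly $G_x^\dagger=G_xZ^1$, hence $G_x^\dagger/G_x=\pi_1(Z)_{\rm tors}$. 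So the paper trades your general facet computation for one citation plus the transitivity result, which is cleaner here; your approach has the virtue of being independent of Corollary~\ref{corpsiinvariant}, at the cost of the extra Bruhat--Tits bookkeeping you flag at the end.
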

\begin{proof}
  Let $\mathbf{S}$ and $\mathbf{Z}$ be as above, and pick a special vertex $x$ in the
  apartment corresponding to $\mathbf{S}$ in $\BT$.  By
  \cite[Prop. 7.7.5]{kaletha_prasad} (which reconciles our definition
  of $G^{1}_{\sigma}$ with the one they introduce in the beginning of their section 7.7),    we have $G^{1}_{x}=G_{x} Z^{1}$, hence
  $G^{1}_{x}/G_{x}= Z^{1}/Z^{0}= \pi_{1}(Z)_{\rm tors}$.
  Therefore,
  if we pick a supercuspidal representation $\pi$ of $\o G_{x}$, and an irreducible
  subquotient $\pi^{1}$ of ${\rm ind}_{G_{x}}^{G_{x}^{1}}(\pi)$, then the
  corresponding primitive central idempotent $\varepsilon_{(x,\pi^{1})}\in\ZG_{\CM}(G)$
  is invariant under the kernel of $\Psi_{G}^{f}(\CM)\To{}\Psi_{Z}^{f}(\CM)$.  So let
  $\varepsilon$ be the unique primitive
  idempotent of $\ZG_{\o\ZM[\frac 1p]}(G)$ such that
  $\varepsilon.\varepsilon_{(x,\pi^{1})} = \varepsilon_{(x,\pi^{1})}$. By
  uniqueness, $\varepsilon$ is also invariant under the kernel of
  $\Psi_{G}^{f}(\o\ZM[\frac 1p])\To{}\Psi_{Z}^{f}(\o\ZM[\frac
      1p])$. Since $\Psi_{G}^{f}(\o\ZM[\frac 1p])$ acts transitively
  on the set of primitive idempotents of $\varepsilon_{0}\ZG_{\o\ZM[\frac 1p]}(G)$, we
  conclude that this action factors over $\Psi_{Z}^{f}(\o\ZM[\frac 1p])$.
\end{proof}

\subsection{The quasi-split case : group side} In this subsection, we assume that $\mathbf{G}$ is
quasi-split over $F$. In this case, the centralizer $\mathbf{Z}$ of a maximal split torus
$\mathbf{S}$ is itself a torus, that we denote by $\mathbf{T}:=\mathbf{Z}$. According to
Lemma \ref{lemmeZ}, Lemma \ref{lemprimetop} and Corollary \ref{corpsiinvariant}, the
natural action of $\Psi_{G}^{f}(\o\ZM[\frac 1p])=\Hom(\pi_{1}(G)_{\rm
      tors},\o\ZM[\frac 1p]^{\times})$ induces a transitive action of
$\Psi_{T}^{f}(\o\ZM[\frac 1p])_{p}=\Hom(\pi_{1}(T)_{p-\rm
      tors},\o\ZM[\frac 1p]^{\times})$
on the set of primitive idempotents of $\varepsilon_{0}\ZG_{\o\ZM[\frac 1p]}(G)$.
Therefore, if $\Tf$ is
$P_{F}$-induced,  we have $\pi_{1}(T)_{p-\rm tors}=1$, so
it follows that $\varepsilon_{0}$ is a primitive idempotent in
$\ZG_{\o\ZM[\frac 1p]}(G)$. In particular we have proven the following result.

\begin{The}\label{quasisplittame}
  Suppose that $\Gf$ is quasi-split and tamely ramified over $F$. Then $\varepsilon_{0}$
  is a primitive idempotent of $\ZG_{\o\ZM[\frac 1p]}(G)$.
\end{The}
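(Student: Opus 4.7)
The plan is to deduce this theorem directly from the chain of reductions already built up in this subsection, specialized to the quasi-split tame setting. First, since $\mathbf{G}$ is quasi-split, the centralizer $\mathbf{Z}$ of a maximal split torus $\mathbf{S}$ is itself a torus, which I denote by $\mathbf{T}:=\mathbf{Z}$. Then, tameness of $\mathbf{G}$ means that the $\Gamma_F$-action on the based root datum of $\mathbf{G}$ factors through the tame quotient; in particular the wild inertia $P_F$ acts trivially on $X_*(\mathbf{T})$, so that $\mathbf{T}$ is $P_F$-induced trivially (any basis of $X_*(\mathbf{T})$ is a disjoint union of singleton $P_F$-orbits).

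The second step is to combine Corollary \ref{corpsiinvariant}, Lemma \ref{lemmeZ} and Lemma \ref{lemprimetop} to obtain a transitive action of $\Psi_T^f(\o\ZM[\frac 1p])_p$ on the set of primitive idempotents of $\varepsilon_0\ZG_{\o\ZM[\frac 1p]}(G)$. Concretely, Corollary \ref{corpsiinvariant} provides the transitivity of $\Psi_G^f(\o\ZM[\frac 1p])$; Lemma \ref{lemmeZ} tells us that this action factors through the quotient $\Psi_T^f(\o\ZM[\frac 1p])$ (here quasi-splitness is used, since $\mathbf{Z}=\mathbf{T}$ is a torus); and Lemma \ref{lemprimetop} further reduces to the $p$-part.

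The last step is to verify that $\pi_1(T)_{p-\rm tors}=0$, which forces $\Psi_T^f(\o\ZM[\frac 1p])_p$ to be trivial and hence the transitively acted-on set to be a singleton, yielding primitivity of $\varepsilon_0$. Since $\mathbf{T}$ is a torus, $\pi_1(T)_{I_F}=X_*(\mathbf{T})_{I_F}$; since $P_F$ acts trivially, $X_*(\mathbf{T})_{P_F}=X_*(\mathbf{T})$ is $\ZM$-free, and the residual $I_F/P_F$-action factors through a finite prime-to-$p$ quotient. A standard averaging argument then shows that the coinvariants of a free $\ZM$-module under a finite prime-to-$p$ group have no $p$-torsion, giving the desired vanishing.

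There is essentially no obstacle here, since all the non-trivial content has been packaged into the preceding lemmas and corollaries. The only point that requires more than bookkeeping is the implication \emph{quasi-split and tame} $\Rightarrow$ $\mathbf{T}$ is $P_F$-induced $\Rightarrow$ $\pi_1(T)_{p-\rm tors}=0$, which is routine.
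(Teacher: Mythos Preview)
Your proof is correct and follows essentially the same route as the paper's own argument: you use Corollary \ref{corpsiinvariant}, Lemma \ref{lemmeZ}, and Lemma \ref{lemprimetop} to reduce to showing $\pi_{1}(T)_{p\text{-}\rm tors}=0$, then observe that tameness makes $P_{F}$ act trivially on $X_{*}(\mathbf{T})$ (so $\mathbf{T}$ is trivially $P_{F}$-induced), and conclude by the prime-to-$p$ averaging argument on $I_{F}/P_{F}$-coinvariants. The paper states this more tersely in the paragraph preceding the theorem, but the logic is identical.
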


This theorem  mirrors the fact that the space of tamely ramified Langlands parameters for
$G$ is connected over $\o\ZM[\frac 1p]$, under the same hypothesis, as proved in \cite[Theorem
  4.29]{DHKM}.
Below we will prove more generally that for any quasi-split $\mathbf{G}$, there is a
natural bijection between connected components of the space of tamely ramified Langlands
parameters for $G$ and the set of primitive idempotents in
$\varepsilon_{0}\ZG_{\o\ZM[\frac 1p]}(G)$. On the $\Gf$-side, the main result is the
following one.

\begin{The}  \label{thm_main}
  Suppose $\mathbf{G}$ is quasi-split. Then the action of $\Psi_{T}^{f}(\o\ZM[\frac 1p])_{p}$ on the
  set of primitive idempotents of $\varepsilon_{0}\ZG_{\o\ZM[\frac 1p]}(G)$ is simply transitive.
\end{The}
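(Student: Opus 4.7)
\emph{Plan.} By Corollary \ref{corpsiinvariant}, combined with Lemma \ref{lemmeZ} (which in the quasi-split case $Z=T$ factors the action through $\Psi_T^f$) and Lemma \ref{lemprimetop} (which kills the prime-to-$p$ part), the action of $\Psi_T^f(\o\ZM[\frac 1p])_p$ on the set $\mathcal{P}$ of primitive idempotents of $\varepsilon_0\ZG_{\o\ZM[\frac 1p]}(G)$ is already known to be transitive. The plan is to upgrade this to freeness by producing a $\Psi_T^f(\o\ZM[\frac 1p])_p$-equivariant map from $\mathcal{P}$ to a set on which the action is already simply transitive; by transitivity upstairs, this will force $|\mathcal{P}| = |\Psi_T^f(\o\ZM[\frac 1p])_p|$ and hence freeness.

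First I would fix a special vertex $x$ in the apartment of a maximal split torus $\mathbf{S}$. By \cite[Prop.~7.7.10]{kaletha_prasad} one has $G_x^\dagger = G_x\cdot T^1$, giving a canonical identification $G_x^\dagger/G_x \simeq \pi_1(T)_{\rm tors}$; moreover $T^1$ centralizes $G_x$, so the quotient acts trivially by conjugation on $\o G_x = G_x/G_x^+$. The finite group $\o G_x^\dagger := G_x^\dagger/G_x^+$ thus sits in a (not necessarily split) trivial-action extension
\[
  1 \to \o G_x \to \o G_x^\dagger \to \pi_1(T)_{\rm tors} \to 1,
\]
and one can write $\o G_x^\dagger = \o G_x \cdot C$ with $C = C_{\o G_x^\dagger}(\o G_x)$ a central extension of $\pi_1(T)_{\rm tors}$ by $\o T^0 := T^0/T^{0+}$. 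Now consider the map $\mathcal{P}\to \{\text{primitive central }\o\ZM[\tfrac 1p]\text{-idempotents of }\o G_x^\dagger\}$ sending $\varepsilon$ to $\varepsilon\cdot e_x^+$. Because every $\psi\in\Psi_G^f(\o\ZM[\frac 1p])$ is trivial on $G_x^+\subset G^0$, we have $\psi\cdot e_x^+ = e_x^+$, and consequently $(\psi\cdot\varepsilon)\cdot e_x^+ = \psi\cdot(\varepsilon\cdot e_x^+)$, so this map is $\Psi_T^f(\o\ZM[\frac 1p])_p$-equivariant.

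The crux of the argument is then to show that the induced action of $\Psi_T^f(\o\ZM[\frac 1p])_p$ on the set of primitive central $\o\ZM[\frac 1p]$-idempotents of $\o G_x^\dagger$ covering a fixed supercuspidal representation $\pi$ of $\o G_x$ is simply transitive, on $|\pi_1(T)_{p\text{-tors}}|$ elements. For this I would invoke Clifford theory for the extension $\o G_x^\dagger = \o G_x\cdot C$: because $\pi_1(T)_{\rm tors}$ acts trivially on the (unique, by Theorem \ref{primitive}) $\o\ZM[\frac 1p]$-block of $\o G_x$, the primitive central idempotents of $\CM\o G_x^\dagger$ covering $\pi$ form a $\Psi_T^f(\CM)$-torsor via twisting. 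The integral refinement over $\o\ZM[\frac 1p]$ rests on the elementary observation that, for any $\ell\neq p$, the reduction $\o\ZM[\frac 1p]\twoheadrightarrow \o\FM_\ell$ remains injective on $p$-power roots of unity; consequently two distinct $p$-power characters of $\pi_1(T)_{\rm tors}$ yield two primitive central idempotents of $\o G_x^\dagger$ that remain distinct modulo every $\ell\neq p$, and hence define distinct primitive $\o\ZM[\frac 1p]$-idempotents of $\o G_x^\dagger$.

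The main obstacle I anticipate is making the Clifford computation precise at the level of primitive central $\o\ZM[\frac 1p]$-idempotents in the presence of the potentially non-trivial 2-cocycle classifying the central extension $1\to \o T^0\to C\to \pi_1(T)_{\rm tors}\to 1$, which a priori could obstruct some characters of $\pi_1(T)_{\rm tors}$ from being realized as central characters of extensions of $\pi$. One must verify that the resulting twisting action of $\Psi_T^f(\o\ZM[\frac 1p])_p$ on these $\o\ZM[\frac 1p]$-idempotents really is simply transitive, so that combined with equivariance of the map above and the already established transitivity of the $\Psi_T^f(\o\ZM[\frac 1p])_p$-action on $\mathcal{P}$, one can conclude $|\mathcal{P}| = |\pi_1(T)_{p\text{-tors}}|$, which is equivalent to the freeness asserted by the theorem.
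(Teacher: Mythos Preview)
Your strategy is sound in outline --- an equivariant map from $\mathcal{P}$ to a free $\Psi_T^f(\o\ZM[\frac 1p])_p$-set would indeed finish the proof --- but the execution has two concrete gaps.

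First, the claim that ``$T^1$ centralizes $G_x$'' is simply false: $T$ is a maximal torus, not central, and conjugation by $T^1$ acts nontrivially on $G_x$ (already for $\mathrm{SL}_2$). What \emph{is} true, and what you need, is that the induced conjugation action of $T^1$ on $\o G_x$ fixes the maximal torus $\o T^0$ pointwise and is therefore inner, so that the outer action of $G_x^\dagger/G_x$ on $\o G_x$ is trivial. This is precisely the kind of argument the paper carries out (for every facet, and for the $p$-torsion part only), and it is not automatic.

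Second, and more seriously, your map $\varepsilon\mapsto \varepsilon\cdot e_x^+$ does not land where you say. The element $\varepsilon * e_x^+$ lies in $e_x^+\HC_{\o\ZM[\frac 1p]}(G)e_x^+$, and there is no reason for it to be supported on $G_x^\dagger$, let alone to be a \emph{primitive} central idempotent of $\o\ZM[\frac 1p]\,\o G_x^\dagger$. The paper's Proposition preceding Corollary~\ref{coridemp0} gives $\varepsilon_T * e_\tau^+ = e_{T,\tau}\in\HC(G_\tau)$ only for the $\Psi_G^f$-\emph{invariant} idempotents $\varepsilon_T$; a primitive $\varepsilon\in\mathcal P$ is not of this form when $|\mathcal P|>1$, and the support of $\varepsilon * e_x^+$ is uncontrolled. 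So the equivariant map you need is not the one you wrote down, and producing a correct one essentially requires constructing the target idempotents first.

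The paper proceeds in the opposite direction: rather than mapping out of $\mathcal P$, it \emph{constructs} the decomposition $\varepsilon_0=\sum_\psi \varepsilon_0^\psi$ directly. The key technical input is a canonical direct product decomposition
\[
G'_\sigma/G_\sigma^+ \;=\; \o G_\sigma \times \pi_1(T)_{p\text{-tors}}
\]
for \emph{every} facet $\sigma$ (not just a special vertex), where $G':=\kappa_G^{-1}(\pi_1(T)_{p\text{-tors}})$. This is established via smooth $\OC_F$-models: one splits the extension using $H^1(\pi_1(T)_{p\text{-tors}},\o T^{\prime\circ}(\o\FM_p))=1$ (a $p$-group acting on a $p'$-group), then shows the resulting conjugation action is trivial because a $p$-group cannot map nontrivially to a torus over $\o\FM_p$. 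The decomposition yields, for each character $\psi$ of $\pi_1(T)_{p\text{-tors}}$, a central idempotent $e_\sigma^\psi\in\HC_{\o\ZM[\frac 1p]}(G'_\sigma)$ with $e_\sigma^+=\sum_\psi e_\sigma^\psi$. One then checks $(e_\sigma^\psi)_\sigma$ is a consistent system and invokes Meyer--Solleveld to produce pairwise orthogonal direct factor subcategories $\Rep^\psi_{\o\ZM[\frac 1p]}(G)$, hence idempotents $\varepsilon_0^\psi$ visibly permuted simply transitively by $\Psi_T^f(\o\ZM[\frac 1p])_p$. Transitivity then forces each $\varepsilon_0^\psi$ to be primitive.

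Your Clifford-theoretic intuition at a single special vertex is morally the $\sigma=x$ case of this, but the passage from local idempotents to Bernstein-center idempotents genuinely requires the consistent-system machinery over the whole building.
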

\begin{proof}
  Let $G':=\kappa_{G}^{-1}(\pi_{1}(T)_{p-{\rm tors}})$ be the inverse image
  in $G$ of $\pi_{1}(T)_{p-\rm tors}$ by
  $\kappa_{G}$ (recall from above that the map $\pi_{1}(T)\To{}\pi_{1}(G)$ is injective on
  torsion subgroups). As already mentioned, it does not depend on the choice of $\mathbf{S}$.
  For any facet $\sigma$ in $\BT_{\bullet}$, we denote by $G_{\sigma}'$ the
  pointwise stabilizer of $\sigma$ in $G'$, so that we have $G_{\sigma}':=G_{\sigma}^{1}\cap G'$. If
  $\sigma$ belongs to the apartment associated to  $\mathbf{S}$, then
  $G_{\sigma}'=G_{\sigma}T'$ where
  $T'=T\cap G'=\kappa_{T}^{-1}(\pi_{1}(T)_{p-\rm tors})$.
  Since $G_{\sigma}\cap T=T^{0}$, we have a short exact sequence
  \begin{equation}
    \bar G_{\sigma} =G_{\sigma}/G_{\sigma}^{+}\injo
    G_{\sigma}'/G_{\sigma}^{+}\twoheadrightarrow \pi_{1}(T)_{p-\rm tors}.\label{eq:1}
  \end{equation}
  We claim that this sequence splits canonically and, more precisely, that there is  a
  canonical decomposition
  \begin{equation}
    G_{\sigma}'/G_{\sigma}^{+} = \left(G_{\sigma}/G_{\sigma}^{+}\right)\times
    \pi_{1}(T)_{p-\rm tors}.\label{eq:2}
  \end{equation}
  To see this,
  recall that there are canonical smooth $\OC_{F}$-models
  $\mathbf{G}_{\sigma}\subset\mathbf{G}'_{\sigma}$ of $\mathbf{G}$ such that
  \begin{enumerate}
    \item $\mathbf{G}_{\sigma}(\OC_{F})=G_{\sigma}$ and
          $\mathbf{G}'_{\sigma}(\OC_{F})=G'_{\sigma}$ and
          $(\mathbf{G}_{\sigma})_{\o\FM_{q}}=((\mathbf{G}'_{\sigma})_{\o\FM_{q}})^{\circ}$.
    \item
          $\mathbf{G}'_{\sigma}$ contains the canonical model $\mathbf{T}'$ of $\mathbf{T}$
          such that $\mathbf{T}'(\OC_{F})=T'$, and
          $\pi_{0}(\mathbf{T'}_{\o\FM_{q}})\simto \pi_{0}((\mathbf{G}'_{\sigma})_{\o\FM_{q}})$,
          while we also have $\pi_{0}(\mathbf{T'}_{\o\FM_{q}})\simto  \pi_{1}(\mathbf{T})_{I_{F},p-\rm
            tors}.$
    \item Denote by  $\o{\mathbf{G}'_{\sigma}}$ the quotient
          of the special fiber $(\mathbf{G}'_{\sigma})_{\FM_{q}}$ of $\mathbf{G}'_{\sigma}$ by
          its unipotent radical. Then
          the short exact sequence (\ref{eq:1}) is obtained by taking the $\FM_{q}$-rational points
          of the sequence
          $$\o{\mathbf{G}_{\sigma}}
            \injo \o{\mathbf{G}'_{\sigma}}
            \twoheadrightarrow \pi_{0}((\mathbf{G}'_{\sigma})_{\o\FM_{q}})=\pi_{1}(\mathbf{T})_{I_{F},p-\rm
            tors}.$$
    \item Denote by $\o{\mathbf{T}'}$ the quotient of the special fiber of $\mathbf{T}'$ by
          its unipotent radical. Then $\mathbf{T}'\injo \mathbf{G}'_{\sigma}$ induces a closed
          immersion $\o{\mathbf{T}'}\injo \o{\mathbf{G}'_{\sigma}}$ and
          $\o{\mathbf{T}'}\cap \o{\mathbf{G}_{\sigma}}=\o{\mathbf{T}'}^{\circ}$ is a maximal
          torus of $\o{\mathbf{G}_{\sigma}}$ and we have an exact sequence
          $$\o{\mathbf{T}'}^{\circ}=\o{\mathbf{T}'}\cap \o{\mathbf{G}_{\sigma}}
            \injo \o{\mathbf{T}'}
            \twoheadrightarrow \pi_{0}((\mathbf{T}')_{\o\FM_{q}})=\pi_{1}(\mathbf{T})_{I_{F},p-\rm
            tors}.$$
  \end{enumerate}

  Here, $\mathbf{G}_{\sigma}$ is the model that would be denoted by
  $\GC_{\sigma}^{0}$ in the notation of \cite[\S 8.3]{kaletha_prasad}, while $\mathbf{G}'_{\sigma}$ is a variant of
  the model denoted by $\GC_{\sigma}^{b}$ there (the latter would correspond
  to $\kappa_{G}^{-1}(\pi_{1}(T)_{\rm tors})$ rather than
  $\kappa_{G}^{-1}(\pi_{1}(T)_{p-\rm tors})$), whose existence follows
  from  Proposition A.5.23 (3) of \emph{loc. cit.}  Then (2) follows from
  (p-primary variants of) Corollaries 11.1.6 and 11.2.1 there. Items (3) and (4) follow
  from the constructions and Corollary 11.7.2 of \emph{loc. cit.}

  Now, since $\o{\mathbf{T}'}^{\circ}(\o\FM_{p})$ is a $p'$-torsion abelian group,
  $H^{1}(\pi_{1}(\mathbf{T})_{p-\rm tors},\o{\mathbf{T}'}^{\circ}(\o\FM_{p}))=\{1\}$  so
  there exists a splitting $\iota :\, \pi_{1}(\mathbf{T})_{I_{F},p-\rm tors}\injo
    \o{\mathbf{T}'}(\o\FM_{p})$ of the last exact sequence. This $\iota$ also provides  a
  splitting $\pi_{1}(\mathbf{T})_{I_{F},p-\rm tors}\injo   \o{\mathbf{G}'_{\sigma}}(\o\FM_{p})$
  of the short exact sequence in item 3 above.
  But since $\mathbf{T}'$    is an abelian group scheme, we see that the conjugation
  action of $\pi_{1}(\mathbf{T})_{I_{F},p-\rm tors}$ on $\o{\mathbf{G}_{\sigma}}$ through
  $\iota$  fixes pointwise the maximal torus $\o{\mathbf{T}'}^{\circ}$ of
  $\o{\mathbf{G}_{\sigma}}$. It follows that this action is inner and, more precisely,
  given by a morphism from $\pi_{1}(\mathbf{T})_{I_{F},p-\rm tors}$ to the image of
  $\o{\mathbf{T}'}^{\circ}$ in the adjoint group of $\o{\mathbf{G}_{\sigma}}$. But such a
  morphism has to be trivial since $\pi_{1}(\mathbf{T})_{I_{F},p-\rm tors}$ is a $p$-group.
  Hence the action of $\pi_{1}(\mathbf{T})_{I_{F},p-\rm tors}$ through $\iota$ is
  trivial on $\o{\mathbf{G}_{\sigma}}$ and we get a decomposition
  $\o{\mathbf{G}'_{\sigma}}=\o{\mathbf{G}_{\sigma}}\times \pi_{1}(\mathbf{T})_{I_{F},p-\rm
    tors}$. Moreover, such a decomposition is unique because
  $\Hom(\pi_{1}(\mathbf{T})_{I_{F},p-\rm tors},
    Z(\o{\mathbf{G}_{\sigma}}(\o\FM_{p})))=\{1\}$.
  Taking $\FM_{q}$-rational points, we get the claimed canonical
  decomposition (\ref{eq:2}).

  Now, this decomposition (\ref{eq:2}) implies that the pro-$p$-radical ${G'_{\sigma}}^{+}$  of
  $G'_{\sigma}$ surjects onto $\pi_{1}(T)_{p-\rm tors}$.
  For any character $\psi$ of $\pi_{1}(T)_{p-\rm tors}$, we therefore get a central idempotent
  $e_{\sigma}^{\psi} \in \HC_{\o\ZM[\frac 1p]}(G'_{\sigma})$ supported on
  ${G'_{\sigma}}^{+}$, and we have $e_{\sigma}^{+}=\sum_{\psi}e_{\sigma}^{\psi}$.
  Again, these idempotents do not depend on the choice of apartment containing $\sigma$,
  since they are given by the restriction of a global character of $G'$ to
  $(G'_{\sigma})^{+}$. In particular, they are invariant under the action of $G$, in the
  sense that $e_{g\sigma}^{\psi}=ge_{\sigma}^{\psi}g^{-1}$ for all $g\in G$. Moreover, if
  $x$ is a vertex of the facet $\sigma$, the pro-$p$-radical ${G'_{x}}^{+}$ of $G'_{x}$
  is a normal subgroup of ${G'_{\sigma}}^{+}$ and we have
  ${G'_{\sigma}}^{+}={G'_{x}}^{+}G_{\sigma}^{+}$. In terms of idempotents, it follows
  that $e_{\sigma}^{\psi}=e_{\sigma}^{+}e_{x}^{\psi}$ for all $\psi$. But then, the proof
  of \cite[Prop. 1.0.6]{lanard} shows that the system of idempotents
  $(e_{\sigma}^{\psi})_{\sigma\in \BT_{\bullet}}$ is consistent in the sense of
  \cite[Def. 2.1]{meyer_resolutions_2010}
  (note that in \cite[Prop. 1.0.6]{lanard} the idempotents are assumed to be supported
  on the parahoric subgroups while here we allow support on a slightly bigger subgroup,
  but this is harmless for the argument there). Then,
  \cite[Thm. 3.1]{meyer_resolutions_2010} tells us that the full subcategories
  $\Rep^{\psi}_{\o\ZM[\frac 1p]}(G) := \{V\in\Rep_{\o\ZM[\frac
        1p]}(G),\, V=\sum_{x\in \BT_{0}} e_{x}^{\psi}V\}$ are Serre subcategories of
  $\Rep^{0}_{\o\ZM[\frac 1p]}(G)$. Since for all $\sigma,\psi$ and $\psi'$ we have
  $\psi\neq \psi'\Rightarrow e_{\sigma}^{\psi}e_{\sigma}^{\psi'}=0$, these categories are
  pairwise orthogonal. Moreover, since for all $\sigma$ we have
  $e_{\sigma}^{+}=\sum_{\psi}e_{\sigma}^{\psi}$, we actually get a decomposition
  $\Rep^{0}_{\o\ZM[\frac 1p]}(G)=\prod_{\psi}\Rep^{\psi}_{\o\ZM[\frac
      1p]}(G)$. Correspondingly, we get a decomposition of $\varepsilon_{0}$ as a sum of
  pairwise orthogonal idempotents
  $\varepsilon_{0}=\sum_{\psi}\varepsilon^{\psi}_{0}$   in $\ZG_{\o\ZM[\frac
        1p]}(G)$. Finally, identifying $\pi_{0}(\Psi_{T})_{p}$ to the group of characters of
  $\pi_{1}(T)_{p-\rm tors}$, our constructions make it clear that the action of
  $\pi_{0}(\Psi_{T})_{p}$ is given by
  $\psi\cdot\varepsilon_{0}^{\psi'}=\varepsilon_{0}^{\psi\psi'}$.
  Since we already know that the action of $\pi_{0}(\Psi_{T})_{p}$ is transitive on
  primitive idempotents, we conclude that each $\varepsilon_{0}^{\psi}$ has to be primitive,
  and that this action is simply transitive.
\end{proof}

\begin{Rem} \label{rempizero}
  Before turning to the dual side, we give an interpretation of the group
  $\Psi_{G}^{f}(\o\ZM[\frac 1p])_{p}$ (through which the natural action of
  $\Psi_{G}(\o\ZM[\frac 1p])$ on the set of
  idempotents in $\varepsilon_{0}\ZG_{\o\ZM[\frac 1p]}(G)$ factors) in terms of
  the group $\pi_{0}(\Psi_{G})$ of connected
  components of $\Psi_{G}$. Indeed, more generally, for any diagonalizable group scheme
  $A=D(M)$ over $\o\ZM[\frac 1p]$, we have an exact sequence $A^{\circ}\injo
    A\twoheadrightarrow \pi_{0}(A)$ where   $A^{\circ}= D(M/M_{p-\rm
    tors})$ is the ``maximal'' connected diagonalizable subgroup scheme of $A$ and
  $\pi_{0}(A)= D(M_{p-\rm tors})$ is a (constant) finite \'etale
  diagonalizable group scheme. In the case $A=\Psi_{G}$, we thus see that
  $\pi_{0}(\Psi_{G})= D(\pi_{1}(G)_{p-\rm tors})$ is the finite constant group scheme associated to the abstract
  group $\Psi_{G}^{f}(\o\ZM[\frac 1p])_{p}=\Hom(\pi_{1}(G)_{p-\rm tors},\o\ZM[\frac
        1p]^{\times})$, and we shall abuse a bit notation by writing
  $$\pi_{0}(\Psi_{G})=\Psi_{G}^{f}(\o\ZM[\hbox{$\frac 1p$}])_{p}.$$
\end{Rem}

\subsection{The quasi-split case: dual group side}

We now explain how the description of primitive idempotents in the last
theorem matches the parametrization of connected components of the
space of tamely ramified Langlands parameters for $G$.  We will use
the definitions and notation from \cite{DHKM}.
Let us denote by $\HG$ ``the'' dual  group of
$\mathbf{G}$, considered as a split reductive group scheme over
$\o\ZM[\frac 1p]$.  Pick a pinning $\rho:=(\HT,\HB, X=\sum_{\alpha\in\Delta^{\vee}}X_{\alpha})$
of $\HG$, whose underlying Borel pair is dual to a Borel pair
$(\mathbf{T},\mathbf{B})$ where $\mathbf{T}$ is as above (a maximally
split maximal torus) and $\mathbf{B}$ is a Borel subgroup of
$\mathbf{G}$ defined over $F$.
Then the $F$-rational structure on $\mathbf{G}$ induces an
action of $W_{F}$ on the root datum of $\HG$, which induces
in turn an action of $W_{F}$ on $\HG$ preserving the
pinning $\rho$.

\begin{Rem} \label{rempsidual}
  Since the morphism $\mathbf{T}_{\rm sc}\To{}\mathbf{T}$ is dual to the morphism
  $\HT\To{}\HT_{\rm ad}$, we see that $\pi_{1}(\mathbf{G})$ is the group of characters of
  the center $Z(\HG)=\ker(\HT\To{}\HT_{\rm ad})$. It follows in particular that
  $$\Psi_{G}=(Z(\hat\Gf)^{I_{F}})_{\fr},$$ as group schemes over $\o\ZM[\frac 1p]$.

  Then, using the last remark, we may slightly abusively identify
  $$ \Psi_{G}^{f}(\o\ZM[\hbox{$\frac 1p$}])_p=\pi_{0}((Z(\HG)^{I_{F}})_{\fr}).$$
\end{Rem}

Let  us now choose a topological generator $\sigma$ of the tame inertia
group $I_{F}/P_{F}$ and denote by $W_{F}^{0}$ the inverse image in
$W_{F}$ of the discrete subgroup of $W_{F}/P_{F}$ generated by
$\sigma$ and Frobenius. According to \cite[\S 1.2]{DHKM}, there is an affine scheme
$ Z^{1}(W_{F}^{0},\HG)_{\rm tame}$ over $\o\ZM[\frac 1p]$ that classifies
$1$-cocycles $W_{F}\To{} \HG$ whose restriction to $P_{F}$ is
\'etale-locally conjugate to the trivial $1$-cocycle $\phi=1_{P_{F}}:P_{F}\To{}\HG$.
This affine scheme carries an action of $\HG$ over $\o\ZM[\frac 1p]$ and
factors as
$$  Z^{1}(W_{F}^{0},\HG)_{\rm tame} = \HG\times^{\HG^{P_{F}}}  Z^{1}(W_{F}^{0},\HG)_{1_{P_{F}}}$$
where $ Z^{1}(W_{F}^{0},\HG)_{1_{P_{F}}}$ is the closed subscheme of
$ Z^{1}(W_{F}^{0},\HG)_{\rm tame}$ where the restriction of
parameters to $P_{F}$ is trivial, and where $\HG^{P_{F}}$ is the
closed subgroup scheme of $\HG$ fixed by $P_{F}$. Note that, in the
notation of \cite{DHKM}, $\HG^{P_{F}}$ would be denoted $C_{\HG}(\phi)$ if $\phi=1_{P_{F}}$.
Since
$ Z^{1}(W_{F}^{0},\HG)_{1_{P_{F}}}=
  Z^{1}(W_{F}^{0}/P_{F},\HG^{P_{F}})$,  we get on quotient stacks
\begin{equation}
  Z^{1}(W_{F}^{0},\HG)_{\rm tame}/\HG =
  Z^{1}(W_{F}^{0}/P_{F},\HG^{P_{F}})/\HG^{P_{F}}.\label{eq:quotients}
\end{equation}
We are interested in parametrizing the connected components of these stacks.
According to Proposition A.13 and Theorem A.12 of \cite{DHKM}, the $\o\ZM[\frac 1p]$-group scheme $\HG^{P_{F}}$ has split reductive neutral
component $\HG^{P_{F},\circ}$ and finite constant
$\pi_{0}(\HG^{P_{F}})$. We are going to prove that the fibers of  the morphism
\begin{equation}
  Z^{1}(W_{F}^{0}/P_{F},\HG^{P_{F}})/\HG^{P_{F}} \To{\mu}
  H^{1}(W_{F}^{0}/P_{F},\pi_{0}(\HG^{P_{F}})),\label{eq:pimorphism}
\end{equation}
(whose target is a finite discrete scheme) are the connected
components of its source.
To this aim, observe that the diagonalizable group scheme
$ Z^{1}(W_{F}^{0}/P_{F},Z(\HG)^{P_{F}})$ acts on
the scheme $ Z^{1}(W_{F}^{0}/P_{F},\HG^{P_{F}})$ by multiplication of
cocycles, and this action is compatible with $\HG^{P_{F}}$-(twisted) conjugation
on $ Z^{1}(W_{F}^{0}/P_{F},\HG^{P_{F}})$. Furthermore, the map $\mu$ is
equivariant if we let $ Z^{1}(W_{F}^{0}/P_{F},Z(\HG)^{P_{F}})$ act on
$H^{1}(W_{F}^{0}/P_{F},\pi_{0}(\HG^{P_{F}}))$ through
$H^{1}(W_{F}^{0}/P_{F},\pi_{0}(Z(\HG)^{P_{F}}))$.

\begin{Lem}
  With the foregoing notation :
  \begin{enumerate}
    \item The natural map $\pi_{0}(\HT^{P_{F}})\To{}\pi_{0}(\HG^{P_{F}})$ is a bijection. In
          particular, $\pi_{0}(\HG^{P_{F}})$ is an abelian $p$-group.
    \item The natural map
          $$\pi_{0}(Z(\HG)^{I_{F}})_{\fr}=H^{1}(\langle\fr\rangle,\pi_{0}(Z(\HG)^{I_{F}}))\To{}
            H^{1}(W_{F}^{0}/P_{F},\pi_{0}(Z(\HG)^{P_{F}}))$$ is an isomorphism.
    \item Similarly, we have an isomorphism $\pi_{0}(\HT^{I_{F}})_{\fr}\simto
            H^{1}(W_{F}^{0}/P_{F},\pi_{0}(\HT^{P_{F}}))$.
    \item  The natural map $H^{1}(W_{F}^{0}/P_{F},\pi_{0}(Z(\HG)^{P_{F}}))
            \To{}H^{1}(W_{F}^{0}/P_{F},\pi_{0}(\HT^{P_{F}}))$ is surjective.
  \end{enumerate}
\end{Lem}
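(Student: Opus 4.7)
The plan is to handle (i) as a structural claim about $\HG^{P_F}$, then derive (ii)--(iv) from inflation-restriction, exploiting the dichotomy between the $p$-primary torsion coming from $P_F$ (which is pro-$p$) and the prime-to-$p$ torsion coming from $I_F/P_F$. For (i), I invoke Theorem A.12 and Proposition A.13 of \cite{DHKM}: the $P_F$-stable pinning of $\HG$ restricts to a pinning of the split reductive group scheme $\HG^{P_F,\circ}$ with maximal torus $\HT^{P_F,\circ}$, and the resulting rigidification yields a section of $\pi_0(\HG^{P_F})$ into $\pi_0(\HT^{P_F})$ inverse to the natural map. The $p$-group claim follows from $\HT^{P_F}=D(X^*(\HT)_{P_F})$: since $P_F$ is pro-$p$ acting on the free $\ZM$-module $X^*(\HT)$ through a finite $p$-group, the torsion of $X^*(\HT)_{P_F}$ is a $p$-group, so in the convention of Remark \ref{rempizero}, $\pi_0(\HT^{P_F})=D((X^*(\HT)_{P_F})_{p\text{-tors}})$ is a finite $p$-group.

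For (ii) and (iii), set $A\in\{Z(\HG),\HT\}$ and $M:=\pi_0(A^{P_F})$, which is a finite $p$-group by (the same argument as) part (i) applied to $Z(\HG)\subset \HT$. The quotient $I_F/P_F$ acts on $M$ through a finite quotient $G'$ of order prime to $p$, hence $H^i(I_F/P_F,M)=H^i(G',M)=0$ for $i\geq 1$, and inflation-restriction for $I_F/P_F\triangleleft W_F^0/P_F$ with quotient $\langle\fr\rangle$ yields $H^1(W_F^0/P_F,M)\simeq H^1(\langle\fr\rangle,M^{I_F/P_F})=(M^{I_F/P_F})_{\fr}$. To identify $M^{I_F/P_F}$ with $\pi_0(A^{I_F})$, I use a Maschke-type splitting: as a $G'$-module, $X^*(A)_{P_F}$ decomposes into its $p$-primary torsion submodule and a complementary submodule whose $G'$-coinvariants are $|G'|$-torsion, hence have trivial $p$-primary part. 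Consequently taking $p$-primary torsion commutes with $G'$-coinvariants (which coincide with $G'$-invariants on a $p$-primary module), giving $M^{I_F/P_F}=\pi_0(A^{I_F})$ and completing (ii) and (iii).

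For (iv), by (ii) and (iii) the map in question becomes $\pi_0(Z(\HG)^{I_F})_{\fr}\to \pi_0(\HT^{I_F})_{\fr}$. Dualizing via the Cartier functor, its surjectivity is equivalent to injectivity of $(X^*(\HT)_{I_F,\,p\text{-tors}})^{\fr}\to (X^*(Z(\HG))_{I_F,\,p\text{-tors}})^{\fr}$, the map induced by the canonical surjection $X^*(\HT)\twoheadrightarrow X^*(Z(\HG))=X^*(\HT)/\ZM\Phi$. The kernel of this character-level map is controlled by the image of $(\ZM\Phi)_{I_F}$ inside $X^*(\HT)_{I_F}$, so one must show that any $\fr$-fixed $p$-torsion element of $X^*(\HT)_{I_F}$ which arises from the root lattice is already zero. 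I would approach this by identifying $\ZM\Phi$ with $X^*(\HT_{\rm ad})$ for the adjoint torus $\HT_{\rm ad}=\HT/Z(\HG)$ and combining this with Frobenius-fixedness and the same Maschke-type splitting as above, in order to reduce the claim to a statement purely at the level of the coinvariant lattice. The main obstacle here is the careful bookkeeping of $p$-primary torsion contributions from the root lattice within $X^*(\HT)_{I_F}$; I expect this to be the most delicate step of the lemma.
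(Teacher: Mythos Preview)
Your treatment of (i)--(iii) is essentially the paper's argument. For (i) the paper cites \cite[Prop.~4.1(d)]{HainesSatake} rather than \cite{DHKM}; the results you invoke from \cite{DHKM} establish that $\HG^{P_F}$ has split reductive identity component and finite constant $\pi_0$, but do not by themselves give the bijection $\pi_0(\HT^{P_F})\simto\pi_0(\HG^{P_F})$, so you should either cite Haines or supply the missing step. Your inflation--restriction argument for (ii) and (iii) is exactly the paper's, as is the identification $\pi_0(A^{P_F})^{I_F/P_F}=\pi_0(A^{I_F})$.

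For (iv) there is a genuine gap: you set up the reduction correctly but explicitly stop short of the key step, and you overestimate its difficulty. The paper in fact proves the stronger (and simpler) statement that $X^*(\HT)_{I_F,\rm tors}\to X^*(Z(\HG))_{I_F,\rm tors}$ is injective, with no restriction to $p$-torsion and no appeal to Frobenius-fixedness. The point you are missing is that the simple roots form an $I_F$-permutation basis of $\ZM\Phi=X^*(\HT_{\rm ad})$ (because $I_F$ acts through pinned automorphisms), so $(\ZM\Phi)_{I_F}$ is \emph{free} on the $I_F$-orbits of simple roots. Since $\ZM\Phi\hookrightarrow X^*(\HT)$ becomes injective after tensoring with $\QM$, the induced map $(\ZM\Phi)_{I_F}\to X^*(\HT)_{I_F}$ has torsion kernel, hence is injective. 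It follows immediately that no nonzero torsion element of $X^*(\HT)_{I_F}$ comes from the root lattice, so the torsion subgroup injects into $X^*(Z(\HG))_{I_F,\rm tors}$. No Maschke splitting or ``careful bookkeeping of $p$-primary torsion'' is required.
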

\begin{proof}
  (1) This is Proposition 4.1 d) of \cite{HainesSatake}.

  (2) Recall first that $\pi_{0}(Z(\HG)^{P_{F}})$ is a finite abelian $p$-group, and the
  action of $I_{F}$ on it is through a cyclic $p'$-group. It follows that
  $H^{1}(I_{F}^{0}/P_{F},\pi_{0}(Z(\HG)^{P_{F}}))=\{1\}$ and therefore the map
  $H^{1}(\langle\fr\rangle,\pi_{0}(Z(\HG)^{P_{F}})^{I_{F}})\To{}
    H^{1}(W_{F}^{0}/P_{F},\pi_{0}(Z(\HG)^{P_{F}}))$ is an isomorphism.
  So it remains to see that $\pi_{0}(Z(\HG)^{P_{F}})^{I_{F}}=\pi_{0}(Z(\HG)^{I_{F}})$, which
  follows from the fact that $(X^{*}(Z(\HG))_{P_{F},p-\rm
        tors})_{I_{F}}=X^{*}(Z(\HG))_{I_{F},p-\rm tors}$ since, as above, the action of $I_{F}$
  on $X^{*}(Z(\HG))_{P_{F}}$ is through a cyclic $p'$-group.

  (3) Apply (2) to $\HT$ instead of $\HG$.

  (4) By (2) and (3),
  it suffices to prove
  surjectivity of $\pi_{0}(Z(\HG)^{I_{F}})\To{}\pi_{0}(\HT^{I_{F}})$, i.e. injectivity of
  $X^{*}(\HT)_{I_{F},p-\rm tors}\To{}X^{*}(Z(\HG))_{I_{F},p-\rm tors}$. For this, we start
  from the exact sequence
  $$ X^{*}(\HT_{\rm ad})_{I_{F}}\To{} X^{*}(\HT)_{I_{F}}\To{} X^{*}(Z(\HG))_{I_{F}}\To{} 0$$
  and we observe that, since $X^{*}(\HT_{\rm ad})$ has a basis permuted by $I_{F}$ (given by
  simple roots), its co-invariants $X^{*}(\HT_{\rm ad})_{I_{F}}$ are a free abelian
  group. Moreover, since the above sequence is exact on the left once we tensor it by $\QM$,
  it follows that $X^{*}(\HT_{\rm ad})_{I_{F}}\To{} X^{*}(\HT)_{I_{F}}$ is injective, hence
  $X^{*}(\HT)_{I_{F},\rm tors}\To{}X^{*}(Z(\HG))_{I_{F},\rm tors}$ is
  injective too.
\end{proof}

The following theorem, together with the identification $\Psi^{f}_{T}(\o\ZM[\frac 1p])_p=
  \pi_{0}((\HT^{I_{F}})_{\fr})=\pi_{0}(\HT^{I_{F}})_{\fr}$ of Remark \ref{rempsidual} is the dual companion of Theorem \ref{thm_main}.

\begin{The} \label{thm_main_dual}
  The connected components of $ Z^{1}(W_{F}^{0},\HG)_{\rm tame}/\HG$
  are the fibers of the map $\mu$ of
  (\ref{eq:pimorphism})  through the identification (\ref{eq:quotients}) :
  $$  Z^{1}(W_{F}^{0}/P_{F},\HG^{P_{F}})/\HG^{P_{F}} =
    Z^{1}(W_{F}^{0},\HG)_{\rm tame}/\HG .$$
  Moreover, the action of
  $ Z^{1}(W_{F}^{0}/P_{F},Z(\HG)^{P_{F}})$ on $ Z^{1}(W_{F}^{0}/P_{F},\HG^{P_{F}})$
  induces a simply transitive action of $\pi_{0}(\HT^{I_{F}})_{\fr}$ on
  connected components of  $ Z^{1}(W_{F}^{0},\HG)_{\rm tame}/\HG$.
\end{The}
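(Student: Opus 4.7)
My plan is to first identify the target of $\mu$ with $\pi_{0}(\HT^{I_{F}})_{\fr}$ via items (1) and (3) of the preceding lemma, and then to prove two complementary statements: (a) the fibers of $\mu$ are connected, and (b) the $\u Z^{1}(W_{F}^{0}/P_{F},Z(\HG)^{P_{F}})$-action on the source of $\mu$ descends, at the level of connected components, to a simply transitive action of $\pi_{0}(\HT^{I_{F}})_{\fr}$. Together these yield both conclusions of the theorem.

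The bridge between (a) and (b) is the compatibility
\[ \mu(z\cdot [\phi]) \;=\; [z]\cdot \mu([\phi]), \]
which is immediate because the $\u Z^{1}(Z(\HG)^{P_{F}})$-action on $\u Z^{1}(\HG^{P_{F}})$ is by pointwise multiplication by central values. Here $[z]$ denotes the image of $z$ in $H^{1}(W_{F}^{0}/P_{F},\pi_{0}(Z(\HG)^{P_{F}}))$, which by item (4) of the lemma maps surjectively onto $H^{1}(W_{F}^{0}/P_{F},\pi_{0}(\HG^{P_{F}}))=\pi_{0}(\HT^{I_{F}})_{\fr}$. This surjectivity alone yields surjectivity of $\mu$ and transitivity of the $\pi_{0}(\HT^{I_{F}})_{\fr}$-action on fibers; freeness at the level of components is then automatic from the compatibility, since if $z$ fixes the component of $[\phi]$ setwise, then $[z]\cdot\mu([\phi])=\mu([\phi])$ forces $[z]=1$.

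The main obstacle is step (a). By the above equivariance, it suffices to prove connectedness of the fiber over the neutral element. A cocycle $\phi$ in that fiber has $\pi\circ\phi$ cohomologically trivial in $\pi_{0}(\HG^{P_{F}})$, so after replacing $\phi$ by a suitable $\HG^{P_{F}}$-twist it takes values in the identity component $\HG^{P_{F},\circ}$. This presents the fiber as the image of
\[ \u Z^{1}(W_{F}^{0}/P_{F},\HG^{P_{F},\circ})/\HG^{P_{F},\circ} \;\longrightarrow\; \u Z^{1}(W_{F}^{0}/P_{F},\HG^{P_{F}})/\HG^{P_{F}}, \]
and it then suffices to prove connectedness of the source. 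This is precisely the kind of connectedness already established in \cite[Thm.~4.29]{DHKM} in the tamely ramified case: here $\HG^{P_{F},\circ}$ is a connected split reductive $\o\ZM[\frac{1}{p}]$-group scheme, and $W_{F}^{0}/P_{F}$ has the explicit presentation $\langle\sigma,\fr\mid\fr\sigma\fr^{-1}=\sigma^{q}\rangle$, so that a cocycle amounts to a pair of points in $\HG^{P_{F},\circ}$ satisfying one explicit equation; the DHKM argument, based on Steinberg's section together with smoothness of the cocycle scheme, then applies essentially verbatim.

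Putting (a) and (b) together, the fibers of $\mu$ are exactly the connected components of $\u Z^{1}(W_{F}^{0}/P_{F},\HG^{P_{F}})/\HG^{P_{F}}$, and the action of $\pi_{0}(\HT^{I_{F}})_{\fr}$ on them is both transitive and free, hence simply transitive, which is the content of the theorem.
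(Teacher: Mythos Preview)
Your overall strategy matches the paper's: reduce to connectedness of the neutral fiber via the equivariance of $\mu$ under central cocycles, and then appeal to \cite[Thm.~4.29]{DHKM} for the scheme $\u Z^{1}(W_{F}^{0}/P_{F},\HG^{P_{F},\circ})$. Your handling of the transitivity and freeness via items (1), (3), (4) of the lemma is essentially the same as the paper's.

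However, there is a genuine gap in your step (a). You assert that the DHKM argument ``applies essentially verbatim'' to $\HG^{P_{F},\circ}$, but \cite[Thm.~4.29]{DHKM} requires that the $W_{F}^{0}/P_{F}$-action on the reductive group in question \emph{preserve a pinning}; merely knowing that $\HG^{P_{F},\circ}$ is split reductive over $\o\ZM[\frac 1p]$ and that $W_{F}^{0}/P_{F}$ has the tame presentation is not sufficient. The paper addresses this point explicitly: it invokes \cite[Prop.~4.1]{HainesSatake} to show that $(\HB^{P_{F},\circ},\HT^{P_{F},\circ},X)$ is a pinning of $\HG^{P_{F},\circ}$, at least over $\o\ZM[\frac 1{2p}]$, and then argues separately that the possible failure in characteristic $2$ (arising when a $P_{F}$-orbit of simple roots contains two roots summing to a root) cannot occur since such an orbit would have even order, forcing $p=2$. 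Only after this verification can one legitimately cite \cite[Thm.~4.29]{DHKM}. Without it, your claim that the connectedness result carries over is unjustified; you should supply this check.
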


\begin{proof}
  By construction, the action of
  $ Z^{1}(W_{F}^{0}/P_{F},\pi_{0}(Z(\HG)^{P_{F}}))$ on
  $ Z^{1}(W_{F}^{0}/P_{F},\pi_{0}(\HG^{P_{F}}))$ induces an action of
  $H^{1}(W_{F}^{0}/P_{F},\pi_{0}(Z(\HG)^{P_{F}}))$ on the set of fibers of the map $\mu$.
  By (1) and (4) of the last lemma, the latter action is transitive, and actually factors
  over a simply transitive action of $H^{1}(W_{F}^{0}/P_{F},\pi_{0}(\HT^{P_{F}}))=\pi_{0}(\HT^{I_{F}})_{\fr}$.
  So, to prove the theorem, it suffices to prove that one fiber of $\mu$ is connected.
  The fiber $\mu^{-1}(1)$ of the trivial cohomology class is
  $ Z^{1}(W_{F}^{0}/P_{F},\HG^{P_{F},\circ})/N\HG^{P_{F},\circ}$, where
  $N\HG^{P_{F},\circ}=\{g\in \HG^{P_{F}}, g^{-1}\sigma(g)\in \HG^{P_{F},\circ},
    g^{-1}\fr(g)\in \HG^{P_{F},\circ}\}.$
  So we are left with proving that $Z^{1}(W_{F}^{0}/P_{F},\HG^{P_{F},\circ})$ is connected. In
  order to apply \cite[Thm. 4.29]{DHKM}, we need to show that the action of $W_{F}/I_{F}$
  on $\HG^{P_{F},\circ}$ fixes a pinning. Thanks to  \cite[Prop 4.1.(a)]{HainesSatake},  we know that
  $(\HB^{P_{F},\circ},\HT^{P_{F},\circ})$ is a Borel pair of $\HG^{P_{F},\circ}$ (over
  $\o\ZM[\frac 1p]$) and even that
  $(\HB^{P_{F},\circ},\HT^{P_{F},\circ},X)$ is a pinning of $\HG^{P_{F},\circ}$, \emph{at least
    over $\o\ZM[\frac 1{2p}]$}. Note that this Borel pair and this pinning are clearly stable under $W_{F}/I_{F}$. So,
  when $p=2$, we are done. On the other hand, as explained in the proof of \cite[Prop 4.1]{HainesSatake},  the
  failure for $X$ to provide a pinning of  $\HG^{P_{F},\circ}$ in characteristic $2$ only
  happens when an orbit of simple roots under  $P_{F}$ contains two roots that add up to a
  root. In this case, the orbit must have even order, and this can't happen if $p$ is
  odd. So, in all cases, $(\HB^{P_{F},\circ},\HT^{P_{F},\circ},X)$ is a pinning of
  $\HG^{P_{F},\circ}$ and we get connectedness of $ Z^{1}(W_{F}^{0}/P_{F},\HG^{P_{F},\circ})$ from \cite[Thm. 4.29]{DHKM}.
\end{proof}

\subsection{Proofs of Theorem \ref{the_main} and Corollary \ref{cor_FS}} \label{pf_main}

Finally, Theorem \ref{the_main} follows from Theorems
\ref{thm_main_dual} and \ref{thm_main} through the identification of
Remark \ref{rempsidual}, once we choose base points in the respective
sets of connected components. Any ``natural'' such choice should be
compatible with parabolic induction from the minimal Levi subgroup
$\mathbf{T}$ of $\mathbf{G}$. But the Langlands correspondence for
tori tells us that the principal block of $\Rep_{\o\ZM[\frac
      1p]}^{0}(T)$ (i.e. the one that contains the trivial representation)
should match the principal component of $ Z^{1}(W_{F}^{0},\HT)_{\rm
      tame}$ (i.e. the one that contains the trivial parameter).

As for Corollary \ref{cor_FS}, let $\pi$ be a depth $0$
irreducible representation of $G$ with coefficients in an algebraically closed field
$L$ over $\o\ZM[\frac 1p]$. By Theorem~\ref{thm_main},
there is an element $\psi \in \Psi_{G}^{f}(\o\ZM[\frac 1p])$ that provides a character
$\psi : G\To{}\pi_{1}(G) \To{}\o\ZM[\frac 1p]^{\times}\To{} L^{\times}$ of $G$ that
belongs to the same block of    $\Rep^{0}_{\o\ZM[\frac 1p]}(G)$ as $\pi$. Hence
the Fargues-Scholze parameters $\varphi_{\pi}$ and $\varphi_{\psi}$ are $L$-points of
the same connected component of the $\o\ZM[\frac 1p]$-scheme $Z^{1}(W_{F}^{0},\HG)\sslash\HG$. Since
$Z^{1}(W_{F}^{0},\HG)_{\rm tame}$ is a sum of connected components,  all we
need to do is to check that  $\varphi_{\psi}$
is tamely ramified. By compatibility with parabolic induction \cite[IX.7.2]{FS} this parameter is the
pushforward along
$Z^{1}(W_{F}^{0},\HT)\sslash\HT\To{}Z^{1}(W_{F}^{0},\HG)\sslash\HG$ of the
Fargues-Scholze parameter of the character $\delta_{B}^{\frac 12}.\psi_{|T}$ of $T$,
where $B$ is any Borel subgroup of $G$ with Levi $T$ and $\delta_{B}$ is its modulus
character. Since the Fargues-Scholze correspondence for tori is the usual one \cite[IX.6.4]{FS}, and the
character $\delta_{B}^{\frac 12}.\psi_{|T}$ is trivial
on $T^{0}$,  its parameter is actually trivial on $I_{F}$, see \cite[Thm 1]{Mishra}, hence a fortiori on $P_{F}$.


\subsection{On the general case} We do not know an analogue of Theorem
\ref{thm_main} in the non quasi-split case, but we believe that Theorem
\ref{quasisplittame} holds for all groups that split over a tamely ramified extension, or
more generally, that have a $P_{F}$-induced maximal $F$-torus. The following example is
not accounted for by our different results.

\begin{Exa} \label{Example_PD_p}
  Let $D$ be a division algebra of dimension $p^{2}$ over $F$ and  $\Gf$ the inner form of
  ${\rm PGL}_{p}$ such that $G=D^{\times}/F^{\times}$. Here,  $\pi_{1}(G)=\ZM/p\ZM$ and
  $\kappa_{G} : G\To{} \pi_{1}(G)$ is induced by the valuation of the reduced norm map. Note
  that $G^{0}=O_{D}^{\times}/O_{F}^{\times}$ surjects onto $\FM_{q^{p}}^{\times}/\FM_{q}^{\times}$ and that the
  action of a suitable generator of $G^{1}/G^{0}=\ZM/p\ZM$ on $\FM_{q^{p}}^{\times}$ is via
  the $q^{th}$-power map (relative Frobenius over $\FM_{q}$). Hence, if $\chi$ is any
  character of $\FM_{q^{p}}^{\times}/\FM_{q}^{\times}$ in general position for the action of
  Frobenius, the induced representation $\pi:={\rm
    ind}_{O_{D}^{\times}/O_{F}^{\times}}^{G}\chi$ is irreducible, of depth $0$, and invariant under
  twisting by characters of $G^{1}/G^{0}$. Therefore, the unique  primitive
  idempotent $\varepsilon$ of $\varepsilon_{0}\ZG_{\o\ZM[\frac 1p]}(G)$ such that
  $\varepsilon\pi\neq 0$ is also invariant under such twisting,
  showing that $\varepsilon_{0}=\varepsilon$ is primitive by Corollary~\ref{corpsiinvariant}.
\end{Exa}

\subsection{Deligne-Lusztig parameters} \label{sec:deligne-luszt-param}
Assuming that $\Gf$ is quasi-split and tamely ramified, we recall here the Deligne-Lusztig
invariant $\pi\mapsto s_{\pi}$ used in Theorem \ref{Thm_order_param}. This is a direct
generalization of \cite[\S 3.4]{lanard} to the tamely ramified setting, and with slightly
different notation.

\medskip
\emph{Transfer maps.} Besides Deligne-Lusztig theory itself, the main point is the existence of natural
morphisms of finite schemes :
\begin{equation}
  (\widehat{\o\Gf}_{\sigma}\sslash \widehat{\o\Gf}_{\sigma})^{\Fr=(.)^{q}}
  \To{} (\HG\rtimes \tau\sslash \HG)^{\Fr=(.)^{q}}\label{eq:transfer}
\end{equation}
for all facets $\sigma$ of $\BG$. Here, $\tau$ is a topological generator of tame inertia,
all duals are taken over $\ZM$, and
$\o\Gf_{\sigma}$ denotes the reductive quotient of the special fiber of the Bruhat-Tits $\OC_{F}$-model $\Gf_{\sigma}$ of
$\Gf$ attached to $\sigma$. To define the map (\ref{eq:transfer}), let $\mathbf{S}$ be a maximal split torus
of $\Gf$ whose apartment contains $\sigma$, and write also $\mathbf{S}$ for its
schematic closure in $\Gf_{\sigma}$, which is a maximal split torus therein.
Denote by $\Tf_{\sigma}$ the centralizer of $\mathbf{S}$ in $\Gf_{\sigma}$ and by
$\mathbf{N}_{\sigma}$ its normalizer. These are smooth group schemes over $\OC_{F}$
and the quotient $\mathbf{W}_{\sigma}:=\mathbf{N}_{\sigma}/\mathbf{T}_{\sigma}$ is an \'etale
group scheme.
By our assumptions on $\Gf$, the generic fiber $\Tf$ of $\Tf_{\sigma}$ is a maximal
$F$-torus of $\Gf$ that is tamely ramified and contained in a Borel subgroup defined over $F$, 
while the reductive quotient $\o\Tf_{\sigma}$ of its special fiber identifies to a maximal torus in $\o\Gf_{\sigma}$.
Actually,  $\Tf_{\sigma}$ is the connected Neron model of $\Tf$ over $\OC_{F}$ and
$\o\Tf_{\sigma}$ is the toric part of its special fiber (compare \cite[Prop. 8.2.4]{kaletha_prasad}), so that, taking cocharacters over
geometric points, we have an $\Fr$-equivariant identification
$X_{*}(\Tf_{\sigma})=X_{*}(\Tf)^{I_{F}}=X_{*}(\Tf)^{\tau}$ (cf
\cite[B.7.9]{kaletha_prasad}). Moreover, the latter identification is
equivariant with respect to the specialization map
$\mathbf{W}_{\sigma}(\bar  k_{F})\To{}\mathbf{W}_{\sigma}(\bar F)^{I_{F}}$ (where bars
denote a choice of algebraic closure).
Then,  going to the dual
tori, this means that we have an  $\Fr$-equivariant identification $\widehat{\o\Tf}_{\sigma}=
  (\widehat\Tf)_{I_{F}}=(\widehat\Tf)_{\tau}$
that is equivariant with respect to the map $W_{\widehat{\o\Gf}_{\sigma}}(\widehat{\o\Tf}_{\sigma})
  \injo W_{\HG}(\HT)^{\tau}$ provided by the identifications $W_{\widehat{\o\Gf}_{\sigma}}(\widehat{\o\Tf}_{\sigma})=
  \mathbf{W}_{\sigma}(\bar k_{F})$ and $W_{\HG}(\HT)=\mathbf{W}_{\sigma}(\bar F)$.
Then the desired map (\ref{eq:transfer}) is induced by the following $\Fr$-equivariant composition :
$$ \widehat{\o\Gf}_{\sigma}\sslash \widehat{\o\Gf}_{\sigma} = \widehat{\o\Tf}_{\sigma}\sslash W_{\widehat{\o\Gf}_{\sigma}}(\widehat{\o\Tf}_{\sigma})
  \To{} (\widehat\Tf)_{\tau}\sslash W_{\HG}(\HT)^{\tau}\simeq (\widehat\Tf\rtimes
  \tau)\sslash N_{\HG}(\HT)_{\tau}\To{\sim} (\HG\rtimes\tau)\sslash\HG$$
where $N_{\HG}(\HT)_{\tau}=\{n\in N_{\HG}(\HT), n\tau(n)^{-1}\in\HT\}$.
Here, we have implicitly chosen $\widehat{\o\Tf}_{\sigma}$, resp. $\widehat\Tf$, as the maximal
torus entering the construction of the dual group $\widehat{\o\Gf}_{\sigma}$, resp. $\widehat\Gf$, with its
pinned action by $\Fr$, resp. $\langle\tau,\Fr\rangle$. The fact that the last map is an
isomorphism is a twisted version of the Chevalley-Steinberg theorem, see for example
\cite[Prop. 6.6]{DHKM}.
The above construction of the map
(\ref{eq:transfer}) is independent of the choice of $\mathbf{S}$, since a different choice
would be conjugated under $\Gf_{\sigma}(\OC_{F})$, leading to compatible identifications
between cocharacter groups of the generic and special fibers of the centralizer, as in \cite[Lemme 3.2.1]{lanard}.


\medskip
\emph{Specialization.} The isomorphism $(\widehat\Tf)_{\tau}\sslash W_{\HG}(\HT)^{\tau}\simto
  \HG\rtimes\tau\sslash\HG$ shows that $(\HG\rtimes\tau\sslash\HG)^{\Fr=(.)^{q}}$ is a finite
scheme. More precisely, if $N$ denotes the exponent of the image of
$W_{\HG}(\HT)^{\tau}\rtimes \langle\Fr\rangle$ in ${\rm  Aut}(\HT)_{\tau}$,
we see that the natural morphism
$(\widehat\Tf)_{\tau}[q^{N}-1]\To{}((\widehat\Tf)_{\tau}\sslash
  W_{\HG}(\HT)^{\tau})^{\Fr=(.)^{q}}$ is surjective (where the bracket denotes the kernel of
$t\mapsto t^{q^{N}-1}$). This also allows us to speak of
\emph{the order} of a geometric point of $(\HG\rtimes\tau\sslash\HG)^{\Fr=(.)^{q}}$ as the
order of any lift in $(\widehat\Tf)_{\tau}$.


Now, by finiteness 
we have specialization maps
$$(\HG\rtimes\tau\sslash\HG)^{\Fr=(.)^{q}}(K)\To{}(\HG\rtimes\tau\sslash\HG)^{\Fr=(.)^{q}}(k)$$
for any discrete valuation ring $\Lambda$ with fraction field $K$ and residue field $k$. When $\Lambda$ is
stricly henselian, this map has a section coming from the canonical lifting of roots of
unity from $k$ to $\Lambda$, which induces a section
$(\widehat\Tf)_{\tau}[q^{N}-1](k)\To{}(\widehat\Tf)_{\tau}[q^{N}-1](\Lambda)$.

From this we see that for each  prime $\ell\neq p$ we have a
specialization map
$$(\HG\rtimes\tau\sslash\HG)^{\Fr=(.)^{q}}(\o\QM_{\ell})\To{}(\HG\rtimes\tau\sslash\HG)^{\Fr=(.)^{q}}(\o\FM_{\ell})$$
with a natural section with image  the subset of elements of  \emph{prime-to-$\ell$ order} in
the left hand side. Actually the composition of the section with specialization is the map that takes
a conjugacy class $s$ to its ``prime-to-$\ell$ order'' (or ``$\ell$-regular'') part.
Similarly, for each maximal ideal of $\o\ZM$ containing $\ell$, we have a
specialization map
$$(\HG\rtimes\tau\sslash\HG)^{\Fr=(.)^{q}}(\o\QM)\To{}(\HG\rtimes\tau\sslash\HG)^{\Fr=(.)^{q}}(\o\FM_{\ell})$$
with a section as above.

\medskip
\emph{The Deligne-Lusztig  decomposition of $\Rep_{L}^{0}(G)$.} For $\ell\neq p$,
Deligne-Lusztig theory provides a surjective map
${\rm Irr}_{\o\QM_{\ell}}(\o\Gf_{\sigma}(k_{F}))\To{}
  (\widehat{\o\Gf}_{\sigma}\sslash  \widehat{\o\Gf}_{\sigma})^{\Fr=(.)^{q}}(\o\QM_{\ell})$
whose fibers are called ``Deligne-Lusztig geometric series''.
This map only depends on a choice of
isomorphism $(\o k_{F})^{\times}\simeq (\QM/\ZM)_{p'}$, which we have
already made when picking the topological generator $\tau$ of $I_{F}/P_{F}$.
In particular, it is invariant under automorphisms of $\o\QM_{\ell}$ and descends
to a map  ${\rm Irr}_{\o\QM}(\o\Gf_{\sigma}(k_{F}))\To{}
  (\widehat{\o\Gf}_{\sigma}\sslash  \widehat{\o\Gf}_{\sigma})^{\Fr=(.)^{q}}(\o\QM)$ canonically.
For a $\o\QM$-point $t$ of $(\widehat{\o\Gf}_{\sigma}\sslash \widehat{\o\Gf}_{\sigma})^{\Fr=(.)^{q}}$ we denote by
$e_{\sigma,\o\QM}^{t}$ the central idempotent of $\o\QM[\o\Gf_{\sigma}(k_{F})]$ that
selects the corresponding geometric series. It is known \cite[Thm 9.12]{cabanes_enguehard}
that if $t$ has prime-to-$\ell$ order then $\sum_{t',
    t'_{\ell'}=t}e^{t'}_{\sigma,\o\QM}$  is
$\ell$-integral, in the sense that it belongs to
$\o\ZM_{(\ell)}[\o\Gf_{\sigma}(k_{F})]$.
This allows us to associate a central idempotent
$e_{\sigma,\o\FM_{\ell}}^{t}$ in
$\o\FM_{\ell}[\o\Gf_{\sigma}(k_{F})]$ to any $\o\FM_{\ell}$-point $t$
of $(\widehat{\o\Gf}_{\sigma}\sslash \widehat{\o\Gf}_{\sigma})^{\Fr=(.)^{q}}$ (this is independent
of the choice of a maximal ideal of $\o\ZM_{(\ell)})$.
In this way we get for any algebraically closed field $L$ over $\o\ZM[\frac 1p]$ a
decomposition
$$1=\sum_{t\in (\widehat{\o\Gf}_{\sigma}\sslash
  \widehat{\o\Gf}_{\sigma})^{\Fr=(.)^{q}}(L)} e_{\sigma,L}^{t}$$ of the
unit in $L[\o\Gf_{\sigma}(k_{F})]$ as a sum of pairwise orthogonal central
idempotents.

Now, for
$s\in (\HG\rtimes \tau\sslash\HG)^{\Fr=(.)^{q}}(L)$, we write $e_{\sigma,L}^{s}:=\sum_{t\mapsto s}e_{\sigma,L}^{t}$. Then the
same argument as in \cite[\S 3.4]{lanard} shows that $\sigma\mapsto e_{\sigma,L}^{s}$ is a
$0$-consistent system of idempotents, so that we have a decomposition
$$ \Rep_{L}^{0}(G) = \prod_{s\in (\HG\rtimes \tau\sslash\HG)^{\Fr=(.)^{q}}(L)}
  \Rep_{L}^{s}(G)$$
where $\Rep_{L}^{s}(G):=\left\{V\in \Rep_{L}(G),
  V=\sum_{\sigma\in \BG_{\bullet}}e_{\sigma,L}^{s}V\right\}$.
When $L$ has characteristic $0$, the relation to the decomposition of Section
\ref{sec:unref-depth-texorpdf} is the following.
For  $\tG=(\sigma,\pi_{\sigma})$ a cuspidal pair, denote by $t_{\pi_{\sigma}}$ the $L$-point of $(\widehat{\o\Gf}_{\sigma}\sslash
  \widehat{\o\Gf}_{\sigma})^{\Fr=(.)^{q}}$ corresponding to the geometric Deligne-Lusztig
series of $\pi_{\sigma}$, and by $s$  the image of $t_{\pi_{\sigma}}$ by the map
(\ref{eq:transfer}). Then we have $\Rep^{\tG}_{L}(G)\subset
  \Rep_{L}^{s}(G)$.

\medskip
\emph{Deligne-Lusztig parameters.}
For any irreducible depth $0$ smooth $L$-representation $\pi$, we  define $s_{\pi}$
to be the unique $L$-point of $(\HG\rtimes \tau\sslash\HG)^{\Fr=(.)^{q}}$ such that
$\pi$ is an object of $\Rep_{L}^{s_{\pi}}(G)$. The map $\pi\mapsto
  s_{\pi}$ satisfies the following :
\begin{enumerate}
  \item Suppose $\pi$ is a subquotient of a parabolically induced module
        $i_{P}^{G}(\rho)$ for some $F$-parabolic subgroup
        $\mathbf{P}=\mathbf{M}\mathbf{U}$ of $\Gf$.  Let $\widehat{\mathbf{P}}=\widehat{\mathbf{M}}\widehat{\mathbf{U}}$
        be a $W_{F}$-stable parabolic subgroup of $\HG$  corresponding to $\mathbf{P}$. Then
        $s_{\pi}$ is the image of $s_{\rho}$ by the natural map
        $(\widehat{\mathbf{M}}\rtimes \tau\sslash\widehat{\mathbf{M}})^{\Fr=(.)^{q}}\To{} (\HG\rtimes
          \tau\sslash\HG)^{\Fr=(.)^{q}}$.

  \item Suppose $L=\o\QM_{\ell}$ and $\pi$ is $\ell$-integral (i.e. admits an admissible
        $\o\ZM_{\ell}$-model $L_{\pi}$), and let $\bar\pi$ be any
        irreducible constituent of the reduction of $L_{\pi}$ to $\o\FM_{\ell}$. Then
        $s_{\bar\pi}$ is the image of $s_{\pi}$ by the specialization map $(\HG\rtimes
          \tau\sslash\HG)^{\Fr=(.)^{q}}(\o\QM_{\ell})\To{}(\HG\rtimes
          \tau\sslash\HG)^{\Fr=(.)^{q}}(\o\FM_{\ell})$.
  \item Suppose $L=\o\FM_{\ell}$ and let $\tilde s_{\pi}$ be the image of $s_{\pi}$ by the
        natural  section $(\HG\rtimes
          \tau\sslash\HG)^{\Fr=(.)^{q}}(\o\FM_{\ell})\To{}(\HG\rtimes
          \tau\sslash\HG)^{\Fr=(.)^{q}}(\o\QM_{\ell})$. Then there is an $\ell$-integral
        $\tilde\pi\in {\rm Irr}_{\o\QM_{\ell}}(G)$ with $s_{\tilde\pi}=\tilde s_{\pi}$ and such that $\pi$ is a constituent of the
        reduction of (any stable lattice in) $\tilde\pi$. The same statement holds if we replace
        $\o\QM_{\ell}$ by $\o\QM$ and consider
        specialization and  section maps associated to a choice of a prime ideal $\LC$ of
        $\o\ZM$ containing $\ell$.

\end{enumerate}

Property (1) is proved like in \cite[Th\'eor\`eme 4.4.3]{lanard} and property (2)
follows from the constructions. For property (3) we may use (1) and an inductive argument
to reduce to the case where $\pi$ is supercuspidal. It is then a Jordan-Hölder factor of a finite
length $\o\FM_{\ell}$-representation of the form $\pi={\rm  ind}_{G_{x}Z}^{G}\pi_{x}$ for some
supercuspidal $\pi_{x}\in\Irr_{\o\FM_{\ell}}(\bar G_{x})$. Denote by
$t_{\pi_{x}}\in (\widehat{\o\Gf}_{x}\sslash\widehat{\o\Gf}_{x})^{\Fr=(.)^{q}}(\o\FM_{\ell})$ its Deligne-Lusztig geometric series and
$\tilde t_{\pi_{x}}$ its natural lift in $(\widehat{\o\Gf}_{x}\sslash\widehat{\o\Gf}_{x})^{\Fr=(.)^{q}}(\o\QM_{\ell})$.
Then $\pi_{x}$ is certainly a Jordan-Hölder constituent of the reduction to
$\o\FM_{\ell}$ of some  $\tilde\pi_{x}\in\Irr_{\o\QM_{\ell}}(\bar G_{x})$ with
$t_{\tilde\pi_{x}}=\tilde t_{\pi_{x}}$. 
By supercuspidality of $\pi_{x}$,  $\tilde\pi_{x}$ also has to be cuspidal, so that the
induction $\tilde\rho:={\rm  ind}_{G_{x}Z}^{G}\tilde\pi_{x}$  is cuspidal with  finite length
and trivial central character, hence is $\ell$-integral. Since $\pi$ is a Jordan-Hölder
constituent of the reduction of $\tilde\rho$, it is a constituent of the reduction of some
irreducible subquotient $\tilde\pi$ of $\tilde\rho$. By construction, $s_{\tilde\pi}$ is
the image of $t_{\tilde\pi_{x}}$, hence equals $\tilde s_{\pi}$.

\subsection{Proof of Theorem \ref{Thm_order_param}}\label{sec:proof-theor-refthm}

As explained in the introduction, we argue on the semi-simple rank of $\Gf$. In rank $0$,
$\Gf$ is a torus and the result follows from the compatibility of ${\rm FS_{mot}}$ with
local class field theory (which follows from the construction just as for the $\ell$-adic
version ${\rm FS}_{\ell}$ in \cite[Prop. IX.6.5]{FS}).  For general $\Gf$,  the
compatibility of $\pi\mapsto \varphi_{\pi}(\tau)$ and $\pi\mapsto s_{\pi}$
with parabolic induction and dual Levi functoriality allows us to focus on $\pi$
supercuspidal. After maybe twisting by an unramified character of $G$, we may assume that
$\pi$ is defined over the closure of the prime subfield of $L$, which reduces the problem
to the cases $L=\o\FM_{\ell}$ or $L=\o\QM$. But property (3) above reduces the case
$L=\o\FM_{\ell}$ to the characteristic $0$ case, i.e. we may assume $L=\o\QM$.
Finally,
after maybe twisting, we may assume that our supercuspidal $\pi$ has an admissible model over $\o\ZM[\frac 1p]$.

We now argue on the number $n_{\pi}$ of prime divisors of the order of $s_{\pi}$.  Assume
first that $n_{\pi}>1$ and let $\ell$ be such a prime number,  and $(s_{\pi})^{(\ell)}$ the
prime-to-$\ell$ part of $s_{\pi}$. Pick any maximal ideal $\LC$ of $\o\ZM$ containing
$\ell$. By properties (2) and (3) above, there is an $\LC$-integral $\pi'\in\Irr_{\o\QM}(G)$ with
$s_{\pi'}=(s_{\pi})^{(\ell)}$ and whose reduction has a common irreducible constituent
$\bar\pi$ with the reduction of $\pi$. By our induction hypothesis, any prime dividing the order of
$\varphi_{\pi'}(\tau)$ divides that of $s_{\pi}^{(\ell)}$, hence the same is true for any
prime dividing the order of $\varphi_{\bar\pi}(\tau)$. But $\varphi_{\bar\pi}(\tau)$ is
the image of $\varphi_{\pi}(\tau)$ by the specialization map, so any prime dividing
the order of $\varphi_{\pi}(\tau)$ either divides that of $\varphi_{\bar\pi}(\tau)$ or is
$\ell$, hence it divides the order of $s_{\pi}$.

It remains to deal with the case $n_{\pi}=1$, i.e. $\pi$ unipotent.
In this case, Proposition \ref{prol12padic} below provides us with two
prime numbers  $\ell_1\neq\ell_2$ different from $p$ and two \emph{non-cuspidal} unipotent
representations $\pi_1,\pi_2\in\Irr_{\o\QM}(G)$ such that $\pi\sim_{\ell_{i}}\pi_{i}$ for
$i=1,2$.
Recall from Remark \ref{remblocksequences} that this means that for any
embeddings $\o\QM\injo\o\QM_{\ell_{i}}$,
$\pi\otimes_{\o\QM}\o\QM_{\ell_{i}}$ and $\pi_{i}\otimes_{\o\QM}\o\QM_{\ell_{i}}$ belong to the same block of $\Rep_{\o\ZM_{\ell_{i}}}(G).$ 
By  the description of connected components of
$Z^{1}(W_{F}^{0},\HG)_{\o\ZM_{\ell}}$ in \cite[Thm 4.8]{DHKM},
it follows that we have  $(\varphi_{\pi})_{|I_{F}^{\ell_{i}}}=(\varphi_{\pi_{i}})_{|I_{F}^{\ell_{i}}}$.
On the other hand, by our induction hypothesis, $\varphi_{\pi_{i}}(\tau)$
is trivial, i.e. $\varphi_{\pi_{i}}$ is trivial on $I_{F}$.
So we see that $\varphi_{\pi}$ is trivial on both $I_{F}^{\ell_{1}}$ and
$I_{F}^{\ell_{2}}$, hence it is trivial on $I_{F}$ and $\varphi_{\pi}(\tau)=1$.

We are now left to prove  Proposition \ref{prol12padic}, and we start with a finite field analogue:

\begin{Pro}
  \label{prol12FiniteGroups}
  Let us assume that $q \neq 2$. Let $\Gf$ be a finite reductive group with positive
  semisimple rank, and $\pi\in \Irr(\Gf^{\fr})$ a unipotent cuspidal representation of $\Gf$. Then there exists two prime numbers $\ell_1,\ell_2 \neq p$, $\ell_1 \neq \ell_2$ and two non-cuspidal unipotent representations $\pi_1,\pi_2 \in \Irr(\Gf^{\fr})$ such that $\pi \sim_{\ell_1} \pi_1 $ and $ \pi \sim_{\ell_2} \pi_2$.
\end{Pro}

\begin{proof}
  Similarly as in Section \ref{secBlocksFiniteGroups} (the bijection
  $\dl{\Gf_{\rm ad}^{\fr}}{1} \simto
  \dl{\Gf^{\fr}}{1}$ on unipotent representations is compatible with the respective $\ell$-block partitions; and a group of adjoint type is a direct product of restriction of scalars of simple groups), we can restrict ourselves to the case where $\Gf$ is a simple group.

  There are no unipotent cuspidal in type $\Aa_n$. If $\Gf$ has type ${}^{2}\Aa_n$ (resp. $\Bb_n$, resp. $\Cc_n$, resp. $\Dd_n$, resp. ${}^{2}\Dd_n$) there is no cuspidal unipotent unless $n=s(s+1)/2 - 1$ (resp. $n=s(s+1)$, resp. $n=s(s+1)$, resp. $n=s^2$, resp. $n=s^2$) for some $s$ and in this case there is only one.

  If $q$ is odd, by \cite[Thm. 21.14]{cabanes_enguehard}, $\dl{\Gf^{\fr}}{1}$ is included in the principal 2-block, so we can take $\ell_1=2$.

  If $q$ is even, then take for $\ell_1$  a prime divisor of $q+1$. As in the proof of Theorem \ref{theclassic} we can use the combinatorics of Lusztig symbols and $\beta$-sets to classify unipotent characters and compute their $d$-series. When $d=2$, a symbol corresponds to a $2$-cuspidal representation if it is itself a $1$-cocore. In the above list of cuspidal characters, none of them are $1$-cocores, so there are no $2$-cuspidals. This means that they all belong to the same $2$-series as another unipotent character, hence $\ell_1$ suits.

  Similarly, the cuspidal unipotent of ${}^{2}\Aa_n$ (resp. $\Bb_n$, $\Cc_n$,$\Dd_n$ or ${}^{2}\Dd_n$) is in the same 6-series (resp. 4-series) as another unipotent representation, so we can take $\ell_2$ such that $q$ is of order 6 (resp. 4) mod $\ell_2$ by Theorem \ref{theordre}. This $\ell_2$ also suits.

  We are left with the exceptional groups $\Ff_4$, ${}^{3}\Dd_4$, $\Gg_2$, $\Ee_6$,
  ${}^{2}\Ee_6$, $\Ee_7$ and $\Ee_8$. For these groups,
  we use  a weaker
  version of Theorem \ref{thelblock} that works also with bad primes,
  \cite[Thm. A]{Enguehard}. It  states that if two unipotent representations are in the same
  $d$-series then they are in the same block. Then, looking at tables, for each unipotent
  cuspidal representation $\pi$ but two exceptions, we can find
  two distinct integers $d_{1},d_{2}$, different from $2$, and such that $\pi$ is in the same
  $d_{i}$-series as a non-cuspidal representation (see Table \ref{tablecuspexgroups} with the notations of \cite[\S 13.9]{carter}). It then remains to pick any primes
  $\ell_{1},\ell_{2}$ such that $q$ has order $d_{i}$ modulo $\ell_{i}$ thanks to Theorem \ref{theordre}.

  The two exceptions are the representations $\Gg_2[1]$ and $\Gg_2[-1]$ in the notation from \cite[\S 13.9]{carter}.
  The representation $\Gg_2[1]$ (resp. $\Gg_2[-1]$) is in the same 2-series and 3-series (resp. 2-series and 6-series) as the trivial
  representation, so we can at least take $\ell_1$ such that $q$ has order 3 modulo $\ell_1$ (resp. $q$ has order 6 modulo $\ell_1$). If there
  also exists $\ell_2$ such that the order of $q$ modulo $\ell_2$ is 2 then we are done. So
  the only issue is when $q=2^k-1$, for some integer $k \geq 2$. In this case, we set
  $\ell_2=2$ and we conclude with \cite[Thm. A.bis]{Enguehard}, which implies that
  two unipotent representations in the same $d$-series are in the same
  $2$-block, if $d$ is the order of $q$ modulo 4, which is 2 here.
  \begin{center}
    \begin{table}[h]
      \begin{tabular}{|l|l|}
        \hline
        unipotent cuspidal representations                                                                         & $(d_1,d_2)$ \\
        \hline
        $\Ff_4[-1]$, $\Ee_8[-1] $                                                                                  & $(6,8)$     \\
        \hline
        $\Ff_4[-i]$, $\Ff_4[i]$, $\Ee_8[i]$, $\Ee_8[-i] $                                                          & $(4,8)$     \\
        \hline
        $\Ff_4[\theta]$, $\Ff_4[\theta^2]$, $\Ee_6[\theta]$, $\Ee_6[\theta^2]$, $\Ee_8[\theta^2]$, $\Ee_8[\theta]$ & $(3,6)$     \\
        $\Gg_2[\theta]$, $\Gg_2[\theta^2]$, ${}^3\Dd_4[1]$, ${}^2\Ee_6[\theta]$, ${}^2\Ee_6[\theta^2]$             &             \\

        \hline
        $\Ff_4'[1]$, $\Ee_8''[1]$                                                                                  & $(4,6)$     \\
        \hline
        $\Ff_4''[1]$, $\Ee_8'[1]$, ${}^2\Ee_6[1]$                                                                  & $(3,4)$     \\
        \hline
        $\Ee_7[\xi]$, $\Ee_7[-\xi]$                                                                                & $(6,14)$    \\

        \hline
        $\Ee_8[-\theta^2]$, $\Ee_8[-\theta]$                                                                       & $(6,24)$    \\
        \hline
        $\Ee_8[\xi^4]$, $\Ee_8[\xi^3]$, $\Ee_8[\xi^2]$, $\Ee_8[\xi]$                                               & $(5,10)$    \\
        \hline
        $ {}^3\Dd_4[-1]$                                                                                           & $(6,12)$    \\

        \hline
      \end{tabular}
      \label{tablecuspexgroups}
      \caption{Suitable pairs $(d_1,d_2)$ for unipotent cuspidal representations of exceptional groups}
    \end{table}
  \end{center}
\end{proof}

\begin{Rem}
  The assumption $q\neq 2$ is needed here for the simple group ${}^{2}\Aa_2$. For instance the group $U_{3}(\mathbb{F}_2)$ has three unipotent representations, the trivial, the Steinberg and $\sigma$ a cuspidal representation. When $\ell = 3$, these three representations are in the same $3$-block. However, the cardinal of $U_{3}(\mathbb{F}_2)$ is $2^3 3^4$, so every odd prime $\ell \neq 3$ is banal, and $\{ \sigma \} $ is a $\ell$-block.
\end{Rem}

\begin{Pro} \label{prol12padic}
  Let $\Gf$ be a reductive group of positive semisimple rank over $F$, and assume that $k_{F}\neq\FM_{2}$. Let $\pi$ be a
  unipotent supercuspidal $\o\QM$-representation of $\Gf$.
  Then there exist
  prime numbers  $\ell_1\neq\ell_2$ different from $p$ and two \emph{non-cuspidal} unipotent
  representations $\pi_1,\pi_2\in\Irr_{\o\QM}(G)$ such that $\pi\sim_{\ell_{i}}\pi_{i}$ for
  $i=1,2$.
\end{Pro}

\begin{proof}
  Let  $\tG\in\TG$ be an unrefined depth 0 type such that $\pi \in
    \Rep^{[\tG]}_{\o\QM}(G)$. Necessarily, $\tG=[x,\pi_x]$ with $x$ a vertex, since $\pi$ is
  supercuspidal. Then $\o\Gf_{x}$ has positive semisimple rank hence, by Proposition
  \ref{prol12FiniteGroups}, there exist two prime numbers
  $\ell_{1},\ell_2$ and two unipotent non-cuspidal representations $\pi_{x,1}$ and
  $\pi_{x,2}$ of $\o\Gf_{x}(k_{F})$ such that $\pi_x \sim_{\ell_1} \pi_{x,1}$ and $\pi_x
    \sim_{\ell_2} \pi_{x,2}$. Let $i \in \{ 1,2 \}$. We consider the cuspidal support of
  $\pi_{x,i}$ which is of the form $(\o\Gf_{\sigma_i}(k_{F}),\tau_i)$ and provides us with
  an  unrefined depth 0 type $\tG_i \in \TG$. Let  $T$ be a minimal subset of $\TG/\sim$
  containing $\tG$ such that the idempotent $\varepsilon_{T}$ is $\ell_{i}$-integral, i.e.
  belongs to $\ZG_{\o\ZM_{(\ell_i)}}(G)$. As in Proposition \ref{proeTintegral}, we have
  $\tG_i \in T$, hence the summand  $\varepsilon_{T}\Rep_{\o\QM}(G)$ contains non cuspidal irreducible
  representations.
  Although $\varepsilon_{T}$ need not be primitive,  we deduce from Lemma \ref{lemactionpsif}
  that $\Psi_G^{f}(\o\QM)$ acts transitively on the set of primitive idempotents in
  $\ZG_{\o\ZM_{(\ell_i)}}(G)$ that refine $\varepsilon_{T}$.
  It follows that any $\ell_{i}$-block inside $\varepsilon_{T}\Rep_{\o\QM}(G)$ contains non cuspidal irreducible
  representations. This applies in particular to the one that contains $\pi$.
\end{proof}


\bibliographystyle{amsalpha}
\bibliography{biblio}

\providecommand{\bysame}{\leavevmode\hbox to3em{\hrulefill}\thinspace}
\providecommand{\MR}{\relax\ifhmode\unskip\space\fi MR }
\providecommand{\MRhref}[2]{%
  \href{http://www.ams.org/mathscinet-getitem?mr=#1}{#2}
}
\providecommand{\href}[2]{#2}
\begin{thebibliography}{DHKM20}

\bibitem[BKV16]{BKV}
Roman Bezrukavnikov, David Kazhdan, and Yakov Varshavsky, \emph{On the depth
  {$r$} {B}ernstein projector}, Selecta Math. (N.S.) \textbf{22} (2016), no.~4,
  2271--2311. \MR{3573958}

\bibitem[BMM93]{bmm}
Michel Brou\'e, Gunter Malle, and Jean Michel, \emph{Generic blocks of finite
  reductive groups}, Ast\'erisque (1993), no.~212, 7--92, Repr\'esentations
  unipotentes g\'en\'eriques et blocs des groupes r\'eductifs finis.
  \MR{1235832}

\bibitem[BV04]{BirkVand}
Geo.~D. Birkhoff and H.~S. Vandiver, \emph{On the integral divisors of
  {$a^n-b^n$}}, Ann. of Math. (2) \textbf{5} (1904), no.~4, 173--180.
  \MR{1503541}

\bibitem[Car93]{carter}
R.W. Carter, \emph{Finite groups of lie type: Conjugacy classes and complex
  characters}, Wiley Classics Library, Wiley, 1993.

\bibitem[CE94]{CabanesEngueharduni}
Marc Cabanes and Michel Enguehard, \emph{On unipotent blocks and their ordinary
  characters}, Invent. Math. \textbf{117} (1994), no.~1, 149--164. \MR{1269428}

\bibitem[CE04]{cabanes_enguehard}
\bysame, \emph{Representation theory of finite reductive groups}, New
  Mathematical Monographs, vol.~1, Cambridge University Press, Cambridge, 2004.
  \MR{2057756 (2005g:20067)}

\bibitem[Con14]{conrad_luminy}
Brian Conrad, \emph{Reductive group schemes}, Autour des sch\'{e}mas en
  groupes. {V}ol. {I}, Panor. Synth\`eses, vol. 42/43, Soc. Math. France,
  Paris, 2014, pp.~93--444. \MR{3362641}

\bibitem[Dat03]{idemp}
Jean-François Dat, \emph{Quelques propri\'et\'es des idempotents centraux des
  groupes {$p$}-adiques}, J. Reine Angew. Math. \textbf{554} (2003), 69--103.
  \MR{1952169}

\bibitem[Dat09]{datFinitude}
\bysame, \emph{Finitude pour les repr\'esentations lisses de groupes
  {$p$}-adiques}, J. Inst. Math. Jussieu \textbf{8} (2009), no.~2, 261--333.
  \MR{2485794}

\bibitem[DHKM20]{DHKM}
Jean-François Dat, David Helm, Robert Kurinczuk, and Gilbert Moss,
  \emph{Moduli of langlands parameters}, 2020.

\bibitem[Eng00]{Enguehard}
Michel Enguehard, \emph{Sur les {$\ell$}-blocs unipotents des groupes
  réductifs finis quand {$\ell$} est mauvais}, J. Algebra \textbf{230} (2000),
  no.~2, 334--377.

\bibitem[Fen24]{Feng}
Tony Feng, \emph{Modular functoriality in the local langlands correspondence},
  2024.

\bibitem[FS21]{FS}
Laurent Fargues and Peter Scholze, \emph{Geometrization of the local langlands
  correspondence}, 2021.

\bibitem[Hai15]{HainesSatake}
Thomas~J. Haines, \emph{On {S}atake parameters for representations with
  parahoric fixed vectors}, Int. Math. Res. Not. IMRN (2015), no.~20,
  10367--10398. \MR{3455870}

\bibitem[His97]{hissF4}
Gerhard Hiss, \emph{Decomposition matrices of the {C}hevalley group {$F_4(2)$}
  and its covering group}, Comm. Algebra \textbf{25} (1997), no.~8, 2539--2555.
  \MR{1459575}

\bibitem[HS90]{HissShamash3}
Gerhard Hiss and Josephine Shamash, \emph{{$3$}-blocks and {$3$}-modular
  characters of {$G_2(q)$}}, J. Algebra \textbf{131} (1990), no.~2, 371--387.
  \MR{1058552}

\bibitem[HS92]{HissShamash2}
\bysame, \emph{{$2$}-blocks and {$2$}-modular characters of the {C}hevalley
  groups {$G_2(q)$}}, Math. Comp. \textbf{59} (1992), no.~200, 645--672.
  \MR{1134731}

\bibitem[KP23]{kaletha_prasad}
Tasho Kaletha and Gopal Prasad, \emph{Bruhat-{T}its theory---a new approach},
  New Mathematical Monographs, vol.~44, Cambridge University Press, Cambridge,
  2023. \MR{4520154}

\bibitem[Lan18]{lanard}
Thomas Lanard, \emph{Sur les $\ell$-blocs de niveau zéro des groupes
  $p$-adiques}, Compositio Mathematica \textbf{154} (2018), no.~7, 1473–1507.

\bibitem[Lan23]{lanardlblocs}
\bysame, \emph{Unipotent {$\ell$}-blocks for simply connected {$p$}-adic
  groups}, Algebra Number Theory \textbf{17} (2023), no.~9, 1533--1572.
  \MR{4637509}

\bibitem[Mis15]{Mishra}
Manish Mishra, \emph{Langlands parameters associated to special maximal
  parahoric spherical representations}, Proc. Amer. Math. Soc. \textbf{143}
  (2015), no.~5, 1933--1941.

\bibitem[Mor99]{morris}
Lawrence Morris, \emph{Level zero {$\bf G$}-types}, Compositio Math.
  \textbf{118} (1999), no.~2, 135--157. \MR{1713308}

\bibitem[MS10]{meyer_resolutions_2010}
Ralf Meyer and Maarten Solleveld, \emph{Resolutions for representations of
  reductive {$p$}-adic groups via their buildings}, J. Reine Angew. Math.
  \textbf{647} (2010), 115--150. \MR{2729360 (2011m:22031)}

\bibitem[Sch25]{Scholze_motiv}
Peter Scholze, \emph{Geometrization of the local langlands correspondence,
  motivically}, 2025.

\bibitem[SS19]{SecherreStevensJL}
Vincent S\'{e}cherre and Shaun Stevens, \emph{Towards an explicit local
  {J}acquet-{L}anglands correspondence beyond the cuspidal case}, Compos. Math.
  \textbf{155} (2019), no.~10, 1853--1887. \MR{4000000}

\end{thebibliography}
\end{document}